\documentclass[]{article}


\usepackage{enumitem}
\usepackage{lipsum}
\usepackage{graphicx}
\usepackage{epstopdf}
\usepackage{algorithmic}
\ifpdf
  \DeclareGraphicsExtensions{.eps,.pdf,.png,.jpg}
\else
  \DeclareGraphicsExtensions{.eps}
\fi
\usepackage{amssymb}

\usepackage{amsthm}
\usepackage{amssymb}
\usepackage{bm}
\usepackage{graphicx,epstopdf}
\usepackage{nicefrac}
\usepackage{a4wide}
\usepackage{psfrag}
\usepackage{doi}

\usepackage[all,cmtip,line]{xy}
\usepackage[normalem]{ulem}
\usepackage{tikz,tikz-cd}
\usetikzlibrary{cd}
\usetikzlibrary{matrix, arrows}
\usepackage{pgfplots}
\usepackage{subcaption}
\usepackage{arcs}
\usepackage[english]{babel}
\usepackage[utf8]{inputenc}
\usepackage{lmodern}
\usepackage{hyperref,url}

\usepackage{mathtools}
\usepackage{cleveref}

\newtheorem{theorem}{Theorem}[section]
\newtheorem{lemma}[theorem]{Lemma}

\newtheorem{proposition}[theorem]{Proposition}
\newtheorem{problem}[theorem]{Problem}
\newtheorem{definition}[theorem]{Definition}
\newtheorem{assumption}[theorem]{Assumption}
\newtheorem{example}[theorem]{Example}
\newtheorem{remark}[theorem]{Remark}

\usepackage[T3,T1]{fontenc}
\newcommand{\modulo}[1]{\left\vert #1\right\vert}
\newcommand{\co}{{\mathsf{co}}}
\newcommand{\pl}{{\mathsf{p}}}

\renewcommand{\det}[1]{{\textnormal{det}(#1)}}
\newcommand{\balpha}{{\bm{\alpha}}}

\newcommand{\seminorm}[2]{\left\vert #1 \right\vert_{#2}}

\newcommand{\curl}{\operatorname{\mathbf{curl}}}
\renewcommand{\div}{\operatorname{div}}
\newcommand{\p}[1]{\langle #1\rangle}
\newcommand{\pr}[1]{\left( #1\right)}

\newcommand{\hcurl}[1]{\bm{H}( \curl; #1 )}
\newcommand{\hcurlcurl}[2]{\bm{H}_{#2}( \curl\curl; #1 )}
\newcommand{\hscurl}[2]{\bm{H}^{#2}( \curl; #1 )}
\newcommand{\hocurl}[1]{\bm{H}_0(\curl; #1 )}

\newcommand{\Hsob}[2]{\bm{H}^{#1}( #2 )}
\newcommand{\hsob}[2]{{H}^{#1}( #2 )}

\newcommand{\Lp}[2]{\bm{L}^{#1}( #2 )}
\newcommand{\lp}[2]{{L}^{#1}( #2 )}

\hyphenation{pa-ra-me-tri-zed}



\newcommand{\be}{\begin{equation}}
\newcommand{\ee}{\end{equation}}

\newcommand{\eps}{{\varepsilon}}
\newcommand{\set}[2]{\{#1\,:\,#2\}}

\newcommand{\C}{\mathcal C}
\newcommand{\cD}{\mathcal D}
\newcommand{\kD}{\mathfrak D}

\newcommand{\kT}{\mathfrak T}


\newcommand{\dist}{\mathrm{dist}}

\newcommand{\bnul}{{\boldsymbol 0}}

\newcommand{\br} {\bm r}

\newcommand{\bT} {\mathbf{T}}
\newcommand{\bmT} {\bm{T}}
\newcommand{\bF} {\mathbf{F}}

\newcommand{\D}{\mathrm{D}}

\newcommand{\dd}{\,{\rm d}}
\newcommand{\rd}{{\rm d}}
\newcommand{\ddx}{\dd\bx}


\newcommand{\dup}[2]{\langle #1, #2\rangle}

\renewcommand{\div}{{\rm div}}


\newcommand{\IC}{{\mathbb C}}

\newcommand{\IN}{{\mathbb N}}

\newcommand{\IR}{{\mathbb R}}

\newcommand{\half}{\frac{1}{2}}

%

\newcommand{\bp}{{\bm p}}

\newcommand{\K}{\mathfrak{K}}
\newcommand{\norm}[2]{\lVert #1\rVert_{#2}}
\newcommand{\tnorm}[1]{{\left\vert\kern-0.25ex\left\vert\kern-0.25ex\left\vert #1 
    \right\vert\kern-0.25ex\right\vert\kern-0.25ex\right\vert}}

%




\newcommand{\rK}{{\breve{K}}}


\newcommand{\Id}{\operatorname{\mathsf{I}}}

\renewcommand{\d}{\operatorname{d}}




%
\newcommand{\supp}{\operatorname{supp}}

\renewcommand{\div}{\operatorname{div}}

\newcommand{\rst}[1]{\left.#1\right|}  



\newcommand{\scurl}{\operatorname{curl}}

\newcommand{\bA}{\mathbf{A}}

\newcommand{\bE}{\mathbf{E}}
\newcommand{\bH}{\boldsymbol{H}}
\newcommand{\VH}{\mathbf{H}}
\newcommand{\bI}{\mathbf{I}}
\newcommand{\bn}{\mathbf{n}}

\newcommand{\bx}{\mathbf{x}}
\newcommand{\by}{\mathbf{y}}
\newcommand{\bJ}{\mathbf{J}}
\newcommand{\bW}{\boldsymbol{W}}
\newcommand{\bV}{\mathbf{V}}
\newcommand{\bU}{\mathbf{U}}




\newcommand{\tD}{\gamma_{\mathrm{D}}}
\newcommand{\tN}{\gamma_{\mathrm{N}}}

\numberwithin{equation}{section}


\crefname{hypothesis}{Hypothesis}{Hypotheses}
\crefname{assumption}{Assumption}{Assumptions}
\crefname{problem}{Problem}{Problems}


\title{Finite-Element Domain Approximation for Maxwell Variational Problems on Curved Domains}


\usepackage{amsopn}


\usepackage{authblk}

\ifpdf
\hypersetup{
  pdftitle={Finite-Element Domain Approximation for Maxwell Variational Problems on Curved Domains},
  pdfauthor={Rub\'en Aylwin and Carlos Jerez-Hanckes}
}
\fi


\begin{document}

\author{Rub\'en Aylwin}
\author{Carlos Jerez-Hanckes}
{\tiny 
\affil{Faculty of Engineering and Sciences, Universidad Adolfo Ib\'a\~nez, Santiago, Chile.}
}

\maketitle


%

\section{Introduction}
%
\label{sec:Intro}
We consider the problem of domain approximation in finite element methods for
Maxwell's equations on curved domains, i.e., when affine or polynomial meshes fail to cover
the domain of interest exactly, forcing to approximate the domain by a sequence
of (potentially curved) polyhedra arising from inexact meshes. In particular, we aim at finding conditions
on the quality of these approximations that ensure convergence rates of the
discrete solutions---in the approximate domains---to the continuous one in the original domain. This analysis is classical in the context of the Laplace equation \cite{ciarlet1972combined} but has not been studied in the Maxwell case. In \cite{aylwin2020effect}, we showed the effects of numerical
integration on the convergence of the curl-conforming finite element method (FEM) 
for Maxwell variational problems and found necessary conditions
on quadrature rules to ensure error convergence rates both in affine and curved meshes. However, we discarded the
error terms associated with domain mesh approximation. 

When approximating solutions to variational problems on a given \emph{original domain}
by solutions to analogous problems on 
\emph{approximate} (computational) domains, the choice of error measure is not straightforward as approximate 
and exact solutions do not share the same domain. Indeed,
several choices for error measures can be considered: comparisons between extensions of continuous or discrete solutions to a {\em hold-all} domain \cite{bhattacharyya1999combined,bramble1994robust,ciarlet1972combined,vanmaele1995combined}; mismatch measured at the intersection between the original and computational
domains \cite{lee2003curved,strang1973change,hernandez2003finite,thomee1973,Zl_mal_1974}; mapping of solutions from computational to
the original domains or vice-versa \cite{arnold2020hellan,di2018third,lenoir1986}; and finally, in \cite{chandler2021boundary,daners2003dirichlet}
the error is measured in a Hilbert space common to solutions on
the approximate and original domains.


In the present note, our main results are condensed in \cref{thm:pull_result_cont,thm:first_result_cont,thm:pull_result_disc,thm:first_result_disc}
in which we estimate the convergence of Maxwell solutions in a series of approximate domains $\{\widetilde\D_i\}_{i\in\IN}$
to the continuous solution in a given original domain---denoted $\D$, and approximated by the sequence $\{\widetilde\D_i\}_{i\in\IN}$---in two different ways. Following \cite{lenoir1986}, \cref{thm:pull_result_cont} estimates the error through curl-conforming \emph{pull-backs} mapping fields in approximate domains $\{\widetilde\D_i\}_{i\in\IN}$
to fields in the original one $\D$ (\emph{cf.}~\cite[Sec.~2.5]{jerez2017electromagnetic}). Alternatively, and in the spirit of \cite{ciarlet1972combined}, \cref{thm:first_result_cont} bounds the error of approximate
solutions to an extension of the solution in the original domain $\D$, allowing for its evaluation in each approximate domain
in the sequence even though one can not ensure that $\widetilde\D_i\subseteq\D$ for any $i\in\IN$ without additional assumptions.
Then, \cref{thm:pull_result_disc,thm:first_result_disc} correspond to discrete analogues to \cref{thm:pull_result_cont,thm:first_result_cont}, respectively.
Moreover, our findings allow for a straightforward combination with our earlier results
in \cite{aylwin2020effect}, and so \cref{thm:pull_result_disc_num,thm:first_result_disc_num}
correspond to fully discrete versions of \cref{thm:pull_result_disc,thm:first_result_disc}, respectively,
by incorporating the effects of numerical integration on error convergence rates.

The structure of the manuscript is as follows. In 
\cref{sec:GeneralDef} we set notation and introduce the Maxwell variational problems
considered throughout, as well as basic parameter and overarching assumptions.
In \cref{sec:FE}, we introduce finite elements on curved meshes
as in \cite{ciarlet1972combined} and introduce elementary results
concerning the continuity and approximation properties of the classical curl-conforming
interpolation operator (see \cite[Sec.~5.5]{Monk:2003aa}). \Cref{sec:ContProb}
introduces the issue of solving
Maxwell variational problems on approximate domains
at the continuous level; a viewpoint which is then directly applied
to the discrete level in \cref{sec:DiscProb}. Then, \cref{sec:NumRes} displays
a simple numerical example confirming our findings followed by
concluding remarks in Section \ref{sec:Conc}. Appendices provide proofs of various technical lemmas and results.

\section{General definitions and Maxwell problem statement}
\label{sec:GeneralDef}
\subsection{General notation}
Set $\imath=\sqrt{-1}$. For $d\in\IN$, we denote the canonical vectors in $\IR^d$ as
$\{\bm{e}_i\}_{i=1}^d$ and the inner product between
two elements $\bx$ and $\by$ in $\IR^d$ is written $\bx\cdot\by$.
Let $\Omega$ be an open bounded
Lipschitz domain in $\IR^d$ with boundary $\partial\Omega$. For $m\in\IN_0:=\IN\cup\{0\}$, $\C^m(\Omega)$
denotes the set of complex-valued functions with $m$-continuous
derivatives on $\Omega$, while $\C^m_0(\Omega)$ is the subset
of elements in $\C^m(\Omega)$ with compact support in $\Omega$.
Infinitely smooth functions with compact support in $\Omega$
belong to $\cD(\Omega):=\bigcap_{m=0}^\infty\C^m_0(\Omega)$.
For $k\in\IN_0$ and
$q\in\IN$, $\mathbb{P}_k(\Omega;\IC^q)$ is the space of polynomials of
degree less than or equal to $k$ from $\Omega$ to $\IC^q$.
$\widetilde{\mathbb{P}}_{k}(\Omega;\IC^q)$ denotes the space of homogeneous
polynomials of degree $k$ from $\Omega$ to $\IC^q$.
For $p\geq 1$ and $s\in\IR$, $L^p(\Omega)$ and $W^{p,s}(\Omega)$
are the class of $p$-integrable functions on $\Omega$ and 
the standard Sobolev spaces of order $s$, respectively.
If $p=2$, we employ the standard notation $H^s(\Omega):=W^{2,s}(\Omega)$.

Norms and semi-norms over a general Banach space $Y$ are
indicated by subscripts. However, the norm and semi-norm of $H^s(\Omega)$
will be written as $\norm{\cdot}{s,\Omega}$ and $\seminorm{\cdot}{s,\Omega}$,
respectively. The topological dual of the Banach space $Y$ will be denoted as $Y'$.
For a Hilbert space $X$, we write its inner product as
$\pr{\cdot,\cdot}_X$, and its duality pairing as
$\p{\cdot,\cdot}_{X'\times X}$. Again, we make an exception for
$H^s(\Omega)$ and write its inner and
duality products as $\pr{\cdot,\cdot}_{s,\Omega}$, and
$\p{\cdot,\cdot}_{s,\Omega}$, respectively. These
are understood in the sesquilinear sense.

General scalar-valued functions and function spaces are differentiated
from their vector-valued counterparts by the use of boldface symbols for the latter. Components of vector-valued functions are identified by subscript, e.g.,
$V_2=\bm{V}\cdot\bm{e}_2$.
For a square matrix $\bA\in\IC^{n\times n}$, with $n\in\IN$, we denote its
induced matrix norm by $\norm{\bA}{\IC^{n\times n}}$, its determinant by
$\det{\bA}$, its transpose by $\bA^\top$, its cofactor
matrix by $\bA^\co$ and its inverse by $\bA^{-1}={\det{\bA}}^{-1}\bA^\co$,
when invertible.
The Jacobian matrix of a differentiable function $\bU:\IR^n\to\IC^n$ is
$\rd\bU:\IR^n\to\IC^{n\times n}$. Moreover, $\Id:\IC^n\to\IC^n$ is the
identity map while $\bI\in\IR^{n\times n}$ denotes the identity matrix,
so that $\rd\Id=\bI$.

Finally, norms of vector-valued functions in $\bm{W}^{p,s}(\Omega)$, for $p\geq 1$ and $s\in\IR$,
are computed as the $p$-sum of the ${W}^{p,s}(\Omega)$-norms of their components, e.g.,~$\norm{\bU}{\bm{W}^{p,s}(\Omega)}^p
=\sum_{i=1}^d\norm{U_i}{{W}^{p,s}(\Omega)}^p$ for $p\in[1,\infty)$ and the customary modification when $p=\infty$. Norms for matrix-valued functions are computed analogously.
For a multi-index $\bm\alpha=(\alpha_1,\hdots,\alpha_d)^\top\in\IN_0^d$,
we write $\modulo{\bm\alpha}=\sum_{i=1}^d\alpha_i$ and
$\bx^{\bm\alpha}=\prod_{i=1}^n\bx_i^{\alpha_i}$. For $n\in\IN$, we
write the set of integers $\{1,2,3,\hdots,n\}$ as $\{1:n\}$
\subsection{Functional spaces}
\label{sec:funspaces}
Let $\Omega$ be an open and bounded Lipschitz domain in $\IR^3$. We introduce the following functional spaces of vector-valued
functions:
\begin{gather*}
\hcurl{\Omega}:=\left\{\bU\in \Lp{2}{\Omega} \ :\ \curl\bU\in \Lp{2}{\Omega} \right\},\\
\hcurlcurl{\Omega}{}:=\left\{\bU\in \hcurl{\Omega} \ :\ \curl\curl\bU\in \Lp{2}{\Omega} \right\},
\end{gather*}
together with the inner product on $\hcurl{\Omega}$:
\begin{gather*}
\pr{\bU,\bV}_{\hcurl{\Omega}}:=\pr{\bU,\bV}_{{0},{\Omega}}+\pr{\curl\bU,\curl\bV}_{{0},{\Omega}},
\end{gather*}
so that $\hcurl{\Omega}$ is Hilbert \cite[Sec.~3.5.3]{Monk:2003aa}.
For $s>0$, let us define the scale of
smooth spaces \cite[Sec.~3.5.3]{Monk:2003aa} useful to
characterize the regularity of Maxwell solutions:
\begin{align*}
\hscurl{\Omega}{s}:=\left\{\bU\in \Hsob{s}{\Omega} \ :\ \curl\bU\in \Hsob{s}{\Omega} \right\},
\end{align*}
with norm and semi-norm given by
\begin{align*}
\norm{\bU}{\bH^s(\curl;\Omega)}:=
\left(\norm{\curl\bU}{s,\Omega}^2+\norm{\bU}{s,\Omega}^2\right)^{\frac{1}{2}},\quad \seminorm{\bU}{\bH^s(\curl;\Omega)}:=
\left(\seminorm{\curl\bU}{s,\Omega}^2+\seminorm{\bU}{s,\Omega}^2\right)^{\frac{1}{2}}.
\end{align*}
We also require appropriate trace spaces
\cite{BuCo00,BufHipTvPCS_NM2003,Monk:2003aa}.
As in \cite{BuCo00}, we introduce two Hilbert
spaces of tangential vector fields on $\partial\Omega$ and their
duals: 
\begin{gather*}
\bH^{\half}_{\parallel}(\partial\Omega):=\{\bn\times(\bU\times\bn)\ :\ \bU\in\bH^{\half}(\partial\Omega)\},\quad
\bH^{\half}_{\perp}(\partial\Omega):=\{\bU\times\bn\ :\ \bU\in\bH^{\half}(\partial\Omega)\},\\
\bH^{-\half}_{\parallel}(\partial\Omega):=\left(\bH^{\half}_{\parallel}(\partial\Omega)\right)'\quad\mbox{and}\quad\bH^{-\half}_{\perp}(\partial\Omega):=\left(\bH^{\half}_{\perp}(\partial\Omega)\right)'.
\end{gather*}
where $\bn$ is the outward unit normal vector on
$\partial\Omega$. Trace spaces on $\hcurl{\Omega}$ are then defined through
first order differential operators on $\partial\Omega$:
\begin{align*}
\bH^{-\half}_{\div}(\partial\Omega) := \{\bU\in
\bH_\parallel^{-\half}(\partial\Omega):\; \div_{\partial\Omega}\bU \in
H^{-\half}(\partial\Omega) \},\\
\bH^{-\half}_{\scurl}(\partial\Omega) := \{\bU\in
\bH_\perp^{-\half}(\partial\Omega):\; \scurl_{\partial\Omega}\bU
\in H^{-\half}(\partial\Omega) \},
\end{align*}
where $\div_{\partial\Omega}$ and $\scurl_{\partial\Omega}$
are the divergence and scalar curl surface operators,
respectively (\emph{cf.}~\cite{BuHip} and \cite[Sec.~2.5.6]{Nedelec_2001} for detailed definitions).
Moreover, it holds that (\emph{cf.}~\cite[Thm.~2]{BuHip})
\begin{align*}
\bH^{-\half}_{\scurl}(\partial\Omega)=
\left(\bH^{-\half}_{\div}(\partial\Omega)\right)'.
\end{align*}
We define the following trace operators
\begin{gather*}
\tD:\bH(\curl;\Omega)\!\to\!\bH^{-\half}({\partial\Omega}),\quad
\tD^\times:\bH(\curl;\Omega)\!\to\!\bH^{-\half}({\partial\Omega}),\\ 
\tN:\hcurlcurl{\Omega}{}\!\to\! \bH^{-\half}_{\div}(\partial\Omega),
\end{gather*}
as the unique continuous extensions of their actions on
$\bU\in\bm{\C}^{\infty}(\overline{\Omega})$ given by
\begin{gather*}
\tD\bU := \bn \times(\rst{\bU}_{{\partial \Omega}}\times \bn),\quad 
\tD^\times\bU := \bn\times\rst{\bU}_{{\partial \Omega}}\quad \mbox{and}\quad
\tN\bU := \bn\times
\rst{\curl\bU}_{{\partial \Omega}},
\end{gather*}
dubbed the Dirichlet, flipped Dirichlet trace
and Neumann traces, respectively. Range spaces are characterized as
\begin{gather*}
\mathrm{Im}(\tD)=\bH^{-\half}_{\scurl}({\partial \Omega}),\qquad
\mathrm{Im}(\tD^{\times})=\bH^{-\half}_{\div}(\partial\Omega).
\end{gather*}
Moreover, for $\bU$ and
$\bV\in \bH(\curl,\Omega)$, the following Green identity holds
\begin{equation*}
(\bU,\curl\bV)_\Omega-(\curl\bU,\bV)_\Omega =
-\dup{\tD^\times\bU}{\tD\bV}_{\partial\Omega},\;
\end{equation*}
where $\dup{\cdot}{\cdot}_{\partial\Omega}$ denotes the
duality between $\bH_{\mathrm{div}}^{-\half}(\partial \Omega)$
and $\bH^{-\half}_{\scurl}(\partial\Omega)$ (\emph{cf.}~\cite[Sec.~3]{Monk:2003aa} and \cite{BuCo00}).

The subset of $\hcurl{\Omega}$-elements satisfying zero boundary conditions
is defined through the flipped Dirichlet trace as
\begin{gather*}
\bH_0(\curl;\Omega) := \{\bU\in \bH(\curl;\Omega) \ : \ \tD^\times \bU =
\bnul \; \mbox{on} \; \partial \Omega \}.
\end{gather*}
By continuity of the flipped Dirichlet trace, $\hocurl{\Omega}$
is a closed subspace of $\hcurl{\Omega}$.

\subsection{Maxwell variational problems}
Let $\D\subset\IR^3$ be an open, bounded domain
with boundary $\Gamma:=\partial\D$ of class $\C^{\mathfrak{M}}$ for some
$\mathfrak{M}\in\IN$. For a circular frequency $\omega > 0$ and time
dependency $e^{\imath \omega t}$, the time-harmonic Maxwell equations on $\D$ read
\begin{align}\label{eq:Maxwell}
\begin{aligned}
\curl \bE + \imath\omega \mu \VH &= \bnul,\\
\imath\omega\eps  \bE -\curl \VH &= -\bJ,
\end{aligned}
\end{align}
where $\bE$ and $\VH$ belong to $\hcurl{\D}$ and represent the electric and magnetic fields, respectively. The magnetic permeability $\mu$ and electric permittivity $\eps$
are assumed to be symmetric matrix-valued functions with coefficients in
$\lp{\infty}{\D}$, and $\bJ\in\Lp{2}{\D}$
is an imposed current in $\D$.

The system \cref{eq:Maxwell} is converted into a second order system for
$\bE$ or $\VH$ by eliminating the remaining field, requiring pointwise invertibility assumptions on either $\eps$ or $\mu$ depending on the specific
choice. Without loss of generality, we follow \cite{aylwin2020effect} and consider the system for
the electric field only, assuming the existence of a pointwise inverse of $\mu$. Thus, 
\begin{align*}
\VH=\imath\frac{1}{\omega}\mu^{-1}\curl\bE,
\end{align*}
from where
\begin{align}\label{eq:MaxE}
\curl \mu^{-1}\curl \bE - \omega^2\eps \bE = -\imath\omega\bJ.
\end{align}
The system is completed by imposing boundary
conditions on traces of $\bE$, e.g.,
\begin{align*}
\tD^{\times}\bE=\bm{g}_{D},\quad\mbox{or}\quad \bn\times(\mu^{-1}\curl\bE)=\bm{g}_N,
\end{align*}
for $\bm{g}_D\in \bH^{-\half}_{\div}(\partial\Omega)$ or
$\bm{g}_N\in\bH^{-\half}_{\curl}(\partial\Omega)$.

We proceed by considering the system \cref{eq:MaxE} with
perfect electric conductor (PEC) boundary conditions, i.e., homogeneous (flipped) Dirichlet boundary conditions given by $\tD^{\times}\bE=\bnul$.
The associated sesquilinear and antilinear forms on $\hocurl{\D}$
for the Maxwell PEC cavity problem, respectively, are 
\begin{align}
\label{eq:Phi}
\Phi(\bU,\bV)&:=
\int_{\D} \mu^{-1} \curl \mathbf{U} \cdot \curl \overline{\mathbf{V}}-
\omega^2\epsilon \mathbf{U} \cdot \overline{\mathbf{V}} \ddx\quad\mbox{and}\quad
\bF(\bV):=-\imath\omega\int_{\D}\bJ\cdot\overline{\bV}\ddx,
\end{align}
which are continuous on $\bH_0(\curl;\D)$ if we assume
$\mu^{-1}$ has coefficients in $\lp{\infty}{\D}$. Then, the problem under consideration reads:
\begin{problem}[Continuous variational problem]
\label{prob:varprob}%
Find $\bE\in \bH_0(\curl;\D)$ such that
\begin{align*}
\Phi(\bE,\bV)=\bF(\bV),
\end{align*}
for all $\bV\in \bH_0(\curl;\D)$.
\end{problem}
In this work, we are concerned with the approximation of
$\D$, the \emph{original domain}, by computational domains $\{\widetilde\D_i\}_{i\in\IN}$
and its consequences on the FEM error convergence rates.
Hence, we take for granted the necessary conditions for
the unique solvability of \cref{prob:varprob}.

\begin{assumption}[Wellposedness]
\label{ass:sesqform}
We assume the sesquilinear form $\Phi$ in \eqref{eq:Phi}
satisfies the following conditions:
\begin{align*}
\modulo{{\Phi}(\bU,\bV)}<C_1\norm{\bU}{\hcurl{\D}}\norm{\bV}{\hcurl{\D}}&\qquad\forall\;\bU,\;\bV\in\hocurl{{\D}},\\
\sup_{\bU\in \hocurl{\D}\setminus\{\bnul\}}\modulo{\Phi(\bU,\bV)}>0&\qquad\forall\;\bV\in \hocurl{\D}\setminus\{\bnul\},
\end{align*}
and 
$$\inf_{\bU\in \hocurl{\D}\setminus\{\bnul\}}\left(\sup_{\bV\in \hocurl{\D}\setminus\{\bnul\}} 
\frac{\modulo{\Phi(\bU,\bV)}}{\norm{\bU}{\hcurl{\D}}\norm{\bV}{\hcurl{\D}}}\right)\geq C_2,$$
for positive constants $C_1$ and $C_2$.
\end{assumption}
We denote the unique solution of \cref{prob:varprob} as
$\bE\in\hocurl{\D}$. Lastly, for examples of problems satisfying
\cref{ass:sesqform} we refer to \cite{AJZS18,ern2018analysis,Monk:2001aa,Monk:2003aa}.

\section{Curl-conforming finite elements}\label{sec:FE}
We begin by introducing the reference tetrahedron from which all
meshes will be constructed.

\begin{definition}[Reference element]
We define $\breve{K}$ as the tetrahedron with vertices
$\bnul$, $\bm{e}_1$, $\bm{e}_2$ and $\bm{e}_3$, and
refer to it as the reference element or reference tetrahedron.
\end{definition}

We also recall our smoothness assumptions on our original domain---stated in
\cref{sec:Intro}---, requiring $\D$ to be of class $\C^\mathfrak{M}$ for $\mathfrak{M}\in\IN$.

\begin{assumption}\label{ass:dom}
The bounded domain $\D$ is of class $\C^\mathfrak{M}$ for $\mathfrak{M}\in\IN$.
\end{assumption}

\Cref{ass:dom} is required to ensure convergence rates of
approximate domains to $\D$ built by polynomial interpolation
\cite{lenoir1986}. We point out that one could easily adjust the
following analysis to piecewise smooth domains.
\subsection{Curl-conforming finite element spaces on straight and curved meshes}
As in \cite{aylwin2020effect}, we introduce\footnote{The superindex $\pl$ stands for polyhedral.} $\kT^{\pl}$ a family of 
quasi-uniform straight meshes of $\D$, written $\tau^{\pl}_{h_i}$, with $h_i>0$ for all $i\in\IN$  $h_i\rightarrow0$ as $i$ grows to infinity, constructed by
straight tetrahedrons and indexed by their mesh-sizes, i.e., $\kT^{\pl}:=\{\tau^{\pl}_{h_i}\}_{i\in\IN}.$ Throughout, $\tau_h^{\pl}$ denotes an arbitrary mesh in $\kT^{\pl}$.
An arbitrary tetrahedron in any of
the meshes of $\kT^\pl$ is denoted $K^\pl$, and we assume
each tetrahedron $K^\pl$ to be constructed from $\breve{K}$ by an affine mapping,
denoted $\bmT_{K^\pl}:\breve{K}\to K^\pl$.  The polyhedral domain covered by
$\tau_h^\pl$ is denoted $\D^{\pl}_{h}$ with boundary
$\Gamma^{\pl}_{h}:=\partial\D^{\pl}_{h}$.

Now, for each polyhedral mesh $\tau_h^{\pl}\in\kT^{\pl}$,
we introduce $\tau_h$ as the \emph{approximated curved mesh} constructed
from $\tau_h^\pl$, in the sense that it shares its nodes with $\tau_h^{\pl}$
but is composed of curved tetrahedrons. As before, we introduce the family
of curved meshes as $\kT:=\{\tau_{h_i}\}_{i\in\IN}.$

For a given $K^\pl\in\tau_h^\pl$ we refer
to the element of $\tau_h$ that shares its nodes with $K^\pl$
as $K$ and consider bijective mappings
$\bmT_{K}:\breve{K}\mapsto K$ to be polynomial of degree $\K\in\IN$,
with $\K<\mathfrak{M}$ and fixed throughout. Also, we refer to an arbitrary mesh in $\kT$ by $\tau_h$ and the domain covered by $\tau_h$ by $\D_h$ with boundary
$\Gamma_h:=\partial\D_h$.

\begin{assumption}[Assumptions on $\kT^{\pl}$ and $\kT$.]
\label{ass:mesh}
The meshes in $\kT^{\pl}$ are assumed to be a\-ffine, quasi-u\-ni\-form and such that their
boundary nodes are located on $\Gamma$ and the polyhedral domains
$\{\D^{\pl}_{h_i}\}_{i\in\IN}$ approximate $\D$.
The family of approximate meshes $\kT$ is assumed to be
$\K$-regular, i.e., for each $K\in\tau_h$, the mappings ${\bmT}_{K}$ are $\C^{\mathfrak{K}+1}$-diffeomorphisms
that belong to $\mathbb{P}_{\mathfrak{K}}(\rK;\IR^3)$ for some integer
$\K<\mathfrak{M}$, with $\mathfrak{M}$ as in \Cref{ass:dom}. Moreover, they satisfy
\begin{align}\label{eq:cond:dTK}
\sup_{\bx\in\breve{K}}\norm{\rd^{n}{\bmT}_{K}(\bx)}{}\leq C_{n}h^{n}
\quad\mbox{and}\quad\sup_{\bx\in{K}}\norm{\rd^{n}\left({\bmT}_{K}^{-1}\right)(\bx)}{}
\leq C_{-n}h^{-n}\quad \forall\; n\in\{1:\mathfrak{K}+1\},
\end{align}
where $C_n$ and $C_{-n}$ are positive constants independent from
the mesh-size for all $n\in\{1:\mathfrak{K}+1\}$. Therein, $\rd^{n}{\bmT}_K$
is the Fr\'echet derivative of order $n$ of $\bmT_K$ and
$\Vert{\rd^{n}\widetilde{\bmT}_K(\bx)}\Vert$ is the
induced norm, with functional spaces omitted for brevity, and the curved domains
$\{\D_{h_i}\}_{i\in\IN}$ approximate $\D$.
Furthermore, we assume that $\det{\rd \bmT_K(\bx)}>0$ for all $\bx\in\rK$
and that there exists some positive $\theta\in\IR$,
independent of $h>0$, such that for all $ K\in\tau_h$, it holds that 
\begin{align*}
\frac{1}{\theta}\leq{\frac{\det{\rd \bmT_{K}(\bx)}}{\det{\rd \bmT_{K}(\by)}}}\leq \theta\quad\forall\;\bx ,\by\in \rK.
\end{align*}
\end{assumption}

Our assumptions on $\kT^\pl$
follow from \cite{ciarlet1972combined} and are satisfied by constructions of curved meshes by polynomial approximations
of the domain $\D$ (\emph{cf.}~\cite{lenoir1986}). \Cref{fig:Example} displays a 2D example of our
setting.

\begin{figure}[t]
\centering
\begin{subfigure}{0.5\textwidth}
\centering
\includegraphics[scale=0.5]{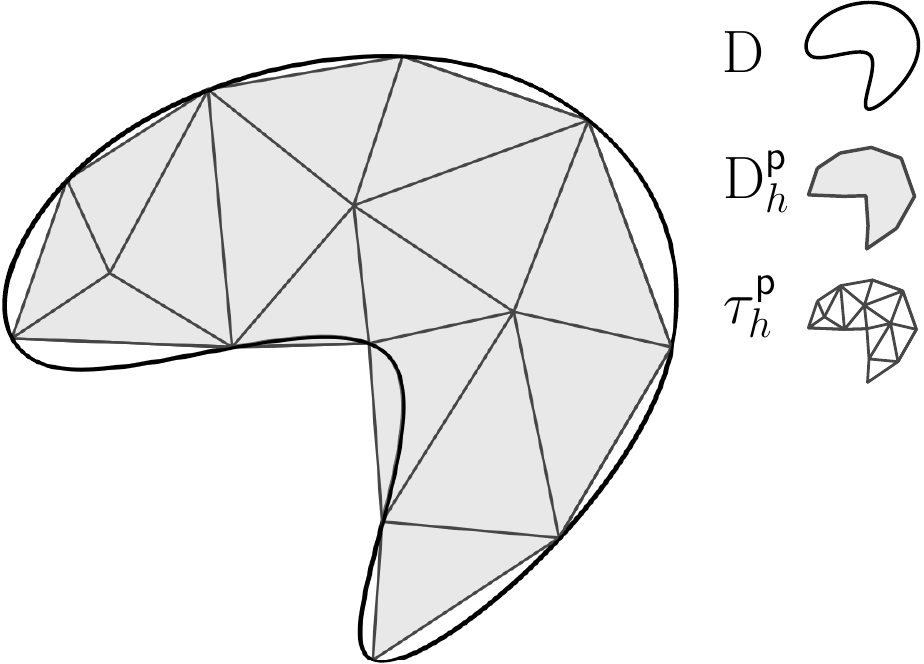}
\caption{Polygonal mesh of $\D$.}
\label{subfig:Domain_Example_Po}
\end{subfigure}%
\begin{subfigure}{0.5\textwidth}
\centering
\includegraphics[scale=0.5]{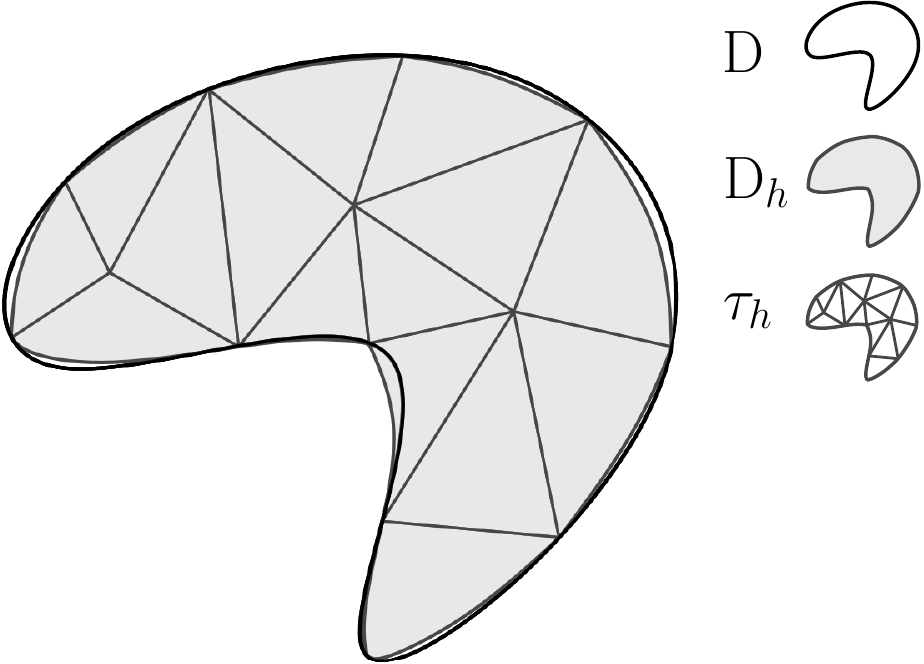}
\caption{Curved mesh of $\D$.}
\label{subfig:Domain_Example_Smo}
\end{subfigure}
\caption{Two-dimensional example of a smooth original domain $\D$, with associated straight (polygonal) and curved meshes, $\tau_h^\pl\in\kT^\pl_h$ of $\D$ (\cref{subfig:Domain_Example_Po}) and $\tau_h\in\kT$ of $\D$ (\cref{subfig:Domain_Example_Smo}), together with respective
approximated domains $\D^\pl_h$ and $\D_h$. Note that $\tau_h^\pl$ and $\tau_h$ share same nodes and that curved edges do not necessarily match the boundary of $\D$.}
\label{fig:Example}
\end{figure}

The sense in which we assume the approximate domains to converge to $\D$
will be made clear in the following section. Note that we have limited the
polynomial degree of our approximate domains by the smoothness of $\D$
(by imposing $\K<\mathfrak{M}$ where $\D$ is of class $\C^{\mathfrak{M}}$)
since no gain is derived from additional orders of approximation.
Moreover, for a multi-index ${\bm\alpha}\in\IN^3_0$, \Cref{ass:mesh} implies the following estimates:
\begin{gather}
\label{eq:normderbound}
\sup\limits_{\bx\in\breve K}\modulo{\frac{\partial^{\bm\alpha}}{\partial\bx^{\bm\alpha}}{{T}_{K,i}}(\bx)}\leq Ch^{\modulo{\bm\alpha}}\quad\mbox{and}\quad \sup\limits_{\bx\in K}\modulo{\frac{\partial^{\bm\alpha}}{\partial\bx^{\bm\alpha}}{{T}_{K,i}^{-1}(\bx)}}\leq Ch^{-\modulo{\bm\alpha}}\qquad\forall\; i\in\{1:3\},\\
\label{eq:bounddet}
ch^3 \leq \modulo{\det{\rd\bm T_K(\bx)}}\leq C  h^{3}\quad\forall\;\bx\in\breve{K},
\end{gather}
where $c$ and $C$ are positive generic constants---not necessarily equal in
each appearance---indepen-dent of $K$ and the mesh-size. The estimates
in \eqref{eq:normderbound} follow from norm equivalence over finite-dimensional spaces and \eqref{eq:cond:dTK}, while \eqref{eq:bounddet}
follows from \eqref{eq:normderbound} by straightforward computation
(\emph{cf.}~Lemma 8 in \cite{aylwin2020effect}).

With the above definitions, we consider finite elements as triples $(K,P_K,\Sigma_K)$,
with $K\in\tau_h$, $P_K$ a space of polynomials over $K$ and
$\Sigma_K:=\{\sigma^{K}_{i}\}_{i=1}^{n_\Sigma},\ n_\Sigma\in\IN,$ a set of linear
functionals acting on $P_K$ (\emph{cf.}~\cite{Monk:2003aa}).
Let $k\in\IN$ refer to the polynomial degree
of the curl-conforming (N\'ed\'elec)
finite element space on the reference tetrahedron $\breve{K}$
defined as
\begin{gather}\label{eq:fe_spaces}
\begin{gathered}
{\bm{P}}^c_{\breve{K}}:=\mathbb{P}_{k-1}(\breve{K};\IC^3)\oplus \lbrace\bm{p}\in 
\widetilde{\mathbb{P}}_{k}(\breve{K},\IC^3)\ : \bx\cdot\bp(\bx)=0\rbrace.
\end{gathered}
\end{gather}
Finite element spaces on arbitrary tetrahedrons $K$ (straight or curved)
are defined via a curl-conforming {\em pull-back} as follows
\begin{gather}
{\bm{P}}^c_{{K}}:=\{\bm{p}\; :\; \psi^{c}_K(\bm{p})\in\bm{P}^c_{\breve{K}}\}\quad\mbox{where}\quad
\psi^c_K(\bV):=\rd \bmT_K^\top (\bV\circ \bmT_K).
\label{eq:fe_spaces_K}
\end{gather}
The pull-back in \cref{eq:fe_spaces_K} defines an isomorphism between
$\hcurl{K}$ and $\hcurl{\rK}$ and satisfies
(\emph{cf.}~\cite[Lem.~2.2]{jerez2017electromagnetic} and \cite{Monk:2003aa})
\begin{align*}
\curl\psi^c_K(\bV)=\rd \bmT_K^\co\curl\bV\circ \bmT_K.
\end{align*}

We refer to \cite[Sec.~5.5]{Monk:2003aa} for the definition of degrees
of freedom for the reference finite element space in \cref{eq:fe_spaces}.The curl-conforming discrete spaces on $\tau_h\in\kT$ are then constructed as
\begin{gather}\label{eq:disc_space}
\begin{gathered}
\bm P^c(\tau_h):= \left\lbrace \bV_h\in\hocurl{\D_h} \ :\  
\bV_h\vert_K \in {\bm{P}}^c_K\quad \forall\; K\in\tau_h\right\rbrace,
\end{gathered}
\end{gather}
where $\D_h$ is the domain covered by $\tau_h$.

\subsection{Curl-conforming interpolation on curved meshes}

We now focus on proving continuity and approximation properties for the classical
curl-conforming interpolation operator on curved meshes.
We shall denote the reference curl-conforming finite element by $(\rK,\bm{P}_{\rK}^c,\Sigma_{\rK}^c)$
and let us introduce $\{\bm\phi_{\sigma}\}_{\sigma\in\Sigma_{\rK}^c}$ as the basis
of $\bm{P}_{\rK}^c$ associated with the degrees of freedom $\Sigma_{\rK}^c$ so that for any pair of
degrees of freedom $\sigma$, $\sigma'\in\Sigma_{\rK}^c$, it holds that
\begin{align*}
\sigma(\bm\phi_{\sigma'})=\begin{cases}
1,\quad \mbox{if }\sigma=\sigma',\\
0,\quad \mbox{if } \sigma\neq\sigma'.
\end{cases}
\end{align*}
For further details, we refer to \cite[Sec.~5.5]{Monk:2003aa}.

\begin{definition}[Local interpolation operator]
Let $s\in\IN$. We define the
canonical interpolation operator on $\breve{K}$
$$\breve{\br}:\hscurl{\breve K}{s}\to\bm{P}^c_{\breve K}$$ as
the operator mapping $\bU\in\hscurl{\breve K}{s}$ to the unique element
in $\bm{P}^c_{\breve K}$ having the same degrees of freedom as $\bU$, i.e.,
\begin{align*}
\breve\br(\bU):=\sum\limits_{\sigma\in\Sigma^c_\rK}\sigma(\bU)\bm{\phi}_\sigma.
\end{align*}

For any $\tau_h\in\kT$ and any $K\in\tau_h$ we denote the canonical interpolation
operator on $K$ as $\br_K$, mapping $\bU\in\hscurl{K}{s}$ to $\bm{P}^c_{K}$ as follows
\begin{align*}
\br_K:\left\lbrace\begin{aligned}\hscurl{K}{s}&\to\bm{P}^c_{ K},\\
\bU&\mapsto (\psi^{c}_K)^{-1}(\breve\br(\psi^c_K(\bU))).\end{aligned}\right.
\end{align*}
\end{definition}


\begin{assumption}\label{ass:k_leq_K}
From here onwards, we assume that $k\leq\K$.
\end{assumption}
The next results are proven in \cref{sec:tech_res2}.
\begin{proposition}\label{prop:locinterp}
Let \cref{ass:mesh,ass:k_leq_K} hold. For $K\in\tau_h$ and $\bU\in\hscurl{K}{s}$ for some $s\in\{1:k\}$, one has that
\begin{align*}
\norm{\bU-\br_K(\bU)}{\hcurl{K}}\leq C h^s\norm{\bU}{\hscurl{K}{s}},
\end{align*}
where $C>0$ is independent of $K$, $\bU$
and $\tau_h\in\kT$.
\end{proposition}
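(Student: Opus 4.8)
The plan is to reduce the estimate to the fixed reference element $\rK$ by means of the curl-conforming pull-back $\psi^c_K$, exploiting that interpolation commutes with the pull-back. Since $\breve\br$ is defined through the same degrees of freedom that $\psi^c_K$ preserves, one has $\psi^c_K(\br_K\bU)=\breve\br(\psi^c_K\bU)$, so that, writing $\bW:=\bU-\br_K\bU$, the pulled-back error is $\psi^c_K\bW=(\Id-\breve\br)(\psi^c_K\bU)$. I would then split $\norm{\bW}{\hcurl{K}}^2=\norm{\bW}{0,K}^2+\norm{\curl\bW}{0,K}^2$ and treat each term by (i) a change of variables relating the $\Lp{2}{K}$-norms on $K$ to those on $\rK$, (ii) a Bramble--Hilbert argument on $\rK$, and (iii) an estimate of the reference seminorms of $\psi^c_K\bU$ in terms of Sobolev norms of $\bU$ on $K$.

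For step (i), the change of variables $\by=\bmT_K(\bx)$ together with $\norm{\rd\bmT_K}{}\lesssim h$, $\norm{\rd\bmT_K^{-1}}{}\lesssim h^{-1}$ and the Jacobian bound \eqref{eq:bounddet} yield $\norm{\bW}{0,K}\lesssim h^{1/2}\norm{\psi^c_K\bW}{0,\rK}$ and $\norm{\curl\bW}{0,K}\lesssim h^{-1/2}\norm{\curl(\psi^c_K\bW)}{0,\rK}$, the latter using $\curl(\psi^c_K\bW)=\rd\bmT_K^\co\,(\curl\bW)\circ\bmT_K$ and $\norm{(\rd\bmT_K^\co)^{-1}}{}\lesssim h^{-2}$. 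For step (ii), since $s\le k$ (\cref{ass:k_leq_K}) the reference space $\bm P^c_{\rK}$ contains $\mathbb{P}_{s-1}(\rK;\IC^3)$ and $\breve\br$ reproduces it; combining this with the continuity of $\breve\br$ on $\hscurl{\rK}{s}$ for $s\ge1$ (cf.~\cite[Sec.~5.5]{Monk:2003aa}) and the Bramble--Hilbert lemma gives $\norm{(\Id-\breve\br)\breve\bV}{\hcurl{\rK}}\lesssim \seminorm{\breve\bV}{\hscurl{\rK}{s}}$ for every $\breve\bV\in\hscurl{\rK}{s}$.

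Step (iii) is where the curved geometry enters. Differentiating $\psi^c_K\bU=\rd\bmT_K^\top(\bU\circ\bmT_K)$ up to order $s$ and applying the Leibniz and Fa\`a di Bruno rules produces terms involving derivatives of $\bmT_K$ up to order $s+1$; since $s\le k\le\K$, these are all controlled through \eqref{eq:normderbound} of \cref{ass:mesh}. Counting powers of $h$ and undoing the change of variables gives $\seminorm{\psi^c_K\bU}{s,\rK}\lesssim h^{s-1/2}\norm{\bU}{s,K}$ and, analogously from $\curl(\psi^c_K\bU)=\rd\bmT_K^\co(\curl\bU)\circ\bmT_K$, $\seminorm{\curl(\psi^c_K\bU)}{s,\rK}\lesssim h^{s+1/2}\norm{\curl\bU}{s,K}$; note that the non-affine transformation mixes lower-order derivatives, which is why the right-hand sides are the full norms of $\bU$ rather than its seminorms. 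Inserting the first bound into the $\Lp{2}{K}$-part gives $\norm{\bW}{0,K}\lesssim h^{1/2}\cdot h^{s-1/2}\norm{\bU}{\hscurl{K}{s}}=h^s\norm{\bU}{\hscurl{K}{s}}$.

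The main obstacle is the $\curl$-part: bounding $\norm{\curl(\psi^c_K\bW)}{0,\rK}$ by the full reference seminorm $\seminorm{\psi^c_K\bU}{\hscurl{\rK}{s}}\sim h^{s-1/2}$ and then multiplying by the $h^{-1/2}$ scaling would only yield $h^{s-1}$, losing one order. To recover the optimal rate I would instead invoke the commuting de Rham diagram $\curl\breve\br=\breve{w}\,\curl$, where $\breve{w}$ is the reference divergence-conforming (face) interpolation operator; this decouples the curl error as $\curl(\psi^c_K\bW)=(\Id-\breve{w})(\curl\psi^c_K\bU)$ and hence, by a Bramble--Hilbert estimate for $\breve{w}$, $\norm{\curl(\psi^c_K\bW)}{0,\rK}\lesssim\seminorm{\curl(\psi^c_K\bU)}{s,\rK}\lesssim h^{s+1/2}\norm{\curl\bU}{s,K}$. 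The $h^{-1/2}$ scaling then produces $\norm{\curl\bW}{0,K}\lesssim h^s\norm{\curl\bU}{s,K}$. Summing the two contributions yields the claimed bound, with a constant independent of $K$ and $\tau_h\in\kT$ because all geometric constants in \cref{ass:mesh} are uniform.
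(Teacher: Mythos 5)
Your proposal is correct and follows essentially the same route as the paper: pull back to the reference element via $\psi^c_K$, apply a reference-element Bramble--Hilbert estimate, and scale back using the geometric bounds of \cref{ass:mesh} (the paper packages your steps (ii)--(iii) as \cref{lem:ref:interp:error,lem:pullbackbound}). In particular, your key fix for the $\curl$-part---decoupling it through the commuting diagram $\curl\breve\br=\breve{w}\,\curl$ so that the curl error is controlled by $\seminorm{\curl\psi^c_K(\bU)}{s,\rK}\lesssim h^{s+\half}$ alone---is exactly the mechanism hidden in the second estimate of \cref{lem:ref:interp:error}, whose proof invokes Lemmas 5.40 and 5.15 of \cite{Monk:2003aa}.
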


\begin{proposition}\label{prop:locinterp_cont}
Let \cref{ass:mesh,ass:k_leq_K} hold. For $K\in\tau_h$ and $\bU\in\hscurl{K}{s}$ with $s\in\{1:k\}$, one can show that
\begin{align*}
\norm{\br_K(\bU)}{\hscurl{K}{s}}&\leq c \norm{\bU}{\hscurl{K}{s}},
\end{align*}
where $c>0$ is independent of $K$, $\bU$
and $\tau_h\in\kT$.
\end{proposition}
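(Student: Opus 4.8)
The plan is to reduce everything to the fixed reference element $\breve{K}$ through the curl-conforming pull-back $\psi^c_K$ and then transfer back, the decisive trick being to write $\br_K(\bU)=\bU-(\bU-\br_K(\bU))$ so that one only ever has to control the \emph{interpolation error} in intermediate seminorms, never $\br_K(\bU)$ from scratch (a direct transform bound on $\br_K(\bU)$ genuinely loses powers of $h$). Concretely, I would first prove, for every $0\le j\le s$, the intermediate estimate
\begin{align*}
\seminorm{\bU-\br_K(\bU)}{j,K}+\seminorm{\curl(\bU-\br_K(\bU))}{j,K}\le C\,h^{s-j}\norm{\bU}{\hscurl{K}{s}},\qquad 0\le j\le s,
\end{align*}
which for $j=0$ is exactly \cref{prop:locinterp}. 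Granting these, continuity is immediate: since $s\ge j$ and $h$ is bounded, $h^{s-j}\le C$, whence $\seminorm{\br_K(\bU)}{j,K}\le\seminorm{\bU}{j,K}+\seminorm{\bU-\br_K(\bU)}{j,K}\le C\norm{\bU}{\hscurl{K}{s}}$, and likewise for the curl-seminorms; summing over $j\in\{0:s\}$ yields the claim.

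To establish the intermediate estimates I would mirror the proof of \cref{prop:locinterp} but retain all seminorm orders. Setting $\hat\bU:=\psi^c_K(\bU)$, the identity $\psi^c_K(\br_K(\bU))=\breve\br(\hat\bU)$ together with the change-of-variables and chain-rule bounds coming from \eqref{eq:cond:dTK}, \eqref{eq:normderbound} and \eqref{eq:bounddet} give two-sided scaling relations of the form $\seminorm{\psi^c_K(\bV)}{j,\breve{K}}\approx h^{j-1/2}\seminorm{\bV}{j,K}$ and $\seminorm{\curl\psi^c_K(\bV)}{j,\breve{K}}\approx h^{j+1/2}\seminorm{\curl\bV}{j,K}$ (up to lower-order cross terms on curved elements). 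On $\breve{K}$, since $\bm P^c_{\breve{K}}\supseteq\mathbb{P}_{k-1}(\breve{K};\IC^3)\supseteq\mathbb{P}_{s-1}(\breve{K};\IC^3)$ by \cref{ass:k_leq_K} and $\breve\br$ is a projection, $I-\breve\br$ annihilates $\mathbb{P}_{s-1}$; the Bramble--Hilbert/Deny--Lions lemma then gives $\seminorm{(I-\breve\br)\hat\bU}{j,\breve{K}}\le C\,\seminorm{\hat\bU}{\hscurl{\breve{K}}{s}}$, where the degrees of freedom are continuous on $\hscurl{\breve{K}}{s}$ for $s\ge1$ by \cite[Lem.~5.38]{Monk:2003aa} (the edge moments being controlled through $\curl\hat\bU\in\Hsob{s}{\breve{K}}\hookrightarrow\Lp{p}{\breve{K}}$ with $p>2$). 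Scaling the function part back with $h^{1/2-j}$ against $\seminorm{\hat\bU}{\hscurl{\breve{K}}{s}}\lesssim h^{s-1/2}\seminorm{\bU}{s,K}+h^{s+1/2}\seminorm{\curl\bU}{s,K}$ produces the required $h^{s-j}\norm{\bU}{\hscurl{K}{s}}$.

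For the curl part the naive bound loses a power of $h$, and the key step is the commuting-diagram property: the identity $\curl\psi^c_K(\bV)=\rd\bmT_K^\co\,\curl\bV\circ\bmT_K$ stated above shows that the covariant pull-back intertwines with the contravariant (Piola) pull-back through $\curl$, so that $\curl\,\breve\br=\breve w\circ\curl$ on $\breve{K}$ with $\breve w$ the canonical divergence-conforming interpolant, hence $\curl(\hat\bU-\breve\br(\hat\bU))=(I-\breve w)(\curl\hat\bU)$. Applying Bramble--Hilbert to $I-\breve w$ bounds this by $\seminorm{\curl\hat\bU}{s,\breve{K}}\lesssim h^{s+1/2}\seminorm{\curl\bU}{s,K}$, and scaling back with the curl factor $h^{-(j+1/2)}$ yields precisely $h^{s-j}\seminorm{\curl\bU}{s,K}$; using only $\seminorm{\curl\hat\bU}{s,\breve{K}}$ (and never $\seminorm{\hat\bU}{s,\breve{K}}$) is exactly what recovers the correct power. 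I expect the main obstacle to be the scaling estimates on \emph{curved}, non-affine elements: there the chain rule generates numerous lower-order cross terms, and one must verify---via the derivative bounds \eqref{eq:cond:dTK}/\eqref{eq:normderbound}, the determinant control \eqref{eq:bounddet}, and the quasi-uniformity of $\kT$---that all such terms carry strictly higher powers of $h$ and are therefore subdominant, so that the affine power counting above survives intact.
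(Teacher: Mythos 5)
Your proposal is correct and follows essentially the same route as the paper's proof: the paper likewise reduces via the triangle inequality to the interpolation error $\bU-\br_K(\bU)$, pulls it back to $\rK$ using the scaling bounds of \cref{lem:pullbackbound} (factors $h^{\half-l}$ and $h^{-(l+\half)}$), and invokes the reference-element estimate \cref{lem:ref:interp:error}---itself proved by precisely your Bramble--Hilbert/Deny--Lions argument, with Monk's Lemma~5.38 giving continuity of the degrees of freedom and Monk's Lemmas~5.40/5.15 supplying the commuting-diagram treatment of the curl part that you correctly identified as the step recovering the right power of $h$---so that the powers $h^{s-\half}$ and $h^{s+\half}$ combine with the inverse scaling to the harmless $h^{s-l}\le C$. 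The only cosmetic difference is that you organize the estimate seminorm-by-seminorm over $j\in\{0:s\}$ and then sum, whereas the paper states the bound for the full $\hscurl{K}{s}$-norm of the error directly.
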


\begin{definition}[Global interpolation operator]
\label{defi:glob_interp}
Let $s\in\IN$. For all $\tau_h\in\kT$ define the canonical interpolation operator on the entire mesh $\tau_h$ as
$$\mathbf{\Pi}_h:\hscurl{\D_h}{s}\to\bm{P}^c(\tau_h),$$
i.e., the operator mapping $\bU\in\hscurl{\D_h}{s}$
to the unique element in $\bm{P}^c(\tau_h)$ having the same degrees of freedom
as $\bU$ element by element:
\begin{align*}
\mathbf{\Pi}_h(\bU)|_{K}=\br_K(\bU)\quad\forall\, K\in\tau_h.
\end{align*}
\end{definition}
The global interpolation operator follows \cite[Sec.~5.5]{Monk:2003aa} and enjoys results analogous to
\cref{prop:locinterp,prop:locinterp_cont}, which we omit since we will require more specialized
versions later on.
\section{Variational problems on approximate domains:
continuous problem}
\label{sec:ContProb}
We now focus on the solution of \cref{prob:varprob}
on a countable family of domains $\kD:=\{\widetilde\D_i\}_{i\in\IN}$
that approximate the original domain $\D$.
Specifically, we are interested in
computing the rate of convergence of solutions
on each domain $\widetilde\D\in\kD$ to $\bE$ in $\D$.
Rather than immediately considering the
approximate domains defined by meshes in $\kT$, we study
the problem in a more general setting, so as to derive conditions
on $\kD$ transferable to our meshes in $\kT$.
We will return to our original discrete problem---identifying
$\widetilde\D_i$ with $\D_{h_i}$---in \Cref{sec:DiscProb}.
Moreover, since we are to consider \cref{prob:varprob} for domains in
$\kD$, which need not be contained in $\D$, we require the
data $\mu$, $\eps$ and $\bJ$ of \cref{prob:varprob} to
have extensions to a \emph{hold-all domain}, denoted $\D_H$,
containing $\D$ and each domain in $\kD$.

\begin{assumption}[Extension of parameters]\label{ass:extension}
There exists an open and bounded Lipschitz domain $\D_H$,
referred to as the {\em hold-all domain}, such that $\overline\D\subset\D_H$ and
$\overline{\widetilde\D}\subset\D_H$ for all $\widetilde\D\in\kD$.
Both $\mu$ and $\epsilon$ are complex symmetric matrix-valued functions with
coefficients in $\lp{\infty}{\D_H}$ and $\mu$ has a pointwise inverse ($\mu^{-1}$)
almost everywhere on $\D_H$, with coefficients in $\lp{\infty}{\D_H}$ as well.
The imposed current $\bJ$ may be extended to $\D_H$
so that $\bF$ in \eqref{eq:Phi} may be extended to $\hocurl{\D_H}'$.
\end{assumption}

We consider $\hocurl{\D}$ and $\hocurl{\widetilde\D}$
to be closed subspaces of $\hcurl{\D_H}$ by 
identifying elements in $\hocurl{\D}$ and $\hocurl{\widetilde\D}$
with their extension by $\bnul$ to $\D_H$. We may then continuously
extend the sesquilinear form in \cref{eq:Phi} as follows:
\begin{align}
\label{eq:Phi_approx}
\Phi(\bU,\bV)&:=
\int_{\D_H} \mu^{-1} \curl \mathbf{U} \cdot \curl \overline{\mathbf{V}}-
\omega^2\epsilon \mathbf{U} \cdot \overline{\mathbf{V}} \d\!\bx\qquad\forall\;\bU,\bV\in\hocurl{\D_H}
\end{align}
while the right-hand side $\bF$ in \cref{eq:Phi}
is extended to $\hocurl{\D_H}'$ by \Cref{ass:extension}, e.g., by
taking an $\Lp{2}{\D_H}$-extension of $\bJ$.
\begin{problem}[Continuous variational problem on inexact domains]
\label{prob:varprob_approx}%
Find $\widetilde\bE \in \bH_0(\curl;\widetilde\D)$ such that
\begin{align*}
\Phi(\widetilde\bE,\bV)=\bF(\bV),
\end{align*}
for all $\bV\in \bH_0(\curl;\widetilde\D)$.
\end{problem}
As before, we assume \Cref{prob:varprob_approx} is well
posed on each $\widetilde\D\in\kD$, with uniform constants.
\begin{assumption}[Wellposedness on $\kD$]\label{ass:sesqform_approx}
We assume the sesquilinear form in 
\cref{eq:Phi_approx} to satisfy the following conditions:
\begin{align*}
\modulo{\Phi(\bU,\bV)}<C_1\norm{\bU}{\hcurl{\widetilde\D}}\norm{\bV}{\hcurl{\widetilde\D}}&\quad\forall\;\bU,\;\bV\in\hocurl{\widetilde{\D}},\\
\sup_{\bU\in \hocurl{{\widetilde\D}}\setminus\{\bnul\}}\modulo{\Phi(\bU,\bV)}>0&\quad
\forall\;\bV\in \hocurl{{\widetilde\D}}\setminus\{\bnul\},
\end{align*}
and $$\inf_{\bU\in \hocurl{{\widetilde\D}}\setminus\{\bnul\}}\left(\sup_{\bV\in \hocurl{{\widetilde\D}}\setminus\{\bnul\}} 
\frac{\modulo{\Phi(\bU,\bV)}}{\norm{\bU}{\hcurl{{\widetilde\D}}}\norm{\bV}{\hcurl{{\widetilde\D}}}}\right)\geq C_2,$$
for all $\widetilde\D\in\kD$, with positive constants $C_1$ and $C_2$ independent
of $\widetilde\D\in\kD$.
\end{assumption}

\begin{example}
Taking $\mu^{-1}$ and $\epsilon$ in $\Lp{\infty}{\D_H;\IC^{3\times 3}}$ and such that
\begin{align*}
\inf _{\mathbf{x} \in D_{H}} \operatorname{Re}\left(e^{\imath \theta}\mu(\bx)^{-1}\right), \  \inf _{\mathbf{x} \in D_{H}} \operatorname{Re}\left(-e^{\imath \theta} \epsilon(\bx)\right)\geq\alpha>0
\end{align*}
for some $\theta\in[0,2\pi)$ and $\alpha>0$ is enough to ensure the conditions in \Cref{ass:sesqform_approx} (\emph{cf.}~\cite{AJZS18,ern2018analysis}).
\end{example}
For general $\widetilde\D\in\kD$, we denote the unique solution of
\cref{prob:varprob_approx} as $\widetilde\bE\in\hocurl{\widetilde\D}$, respectively, $\widetilde\bE_i\in\hocurl{\widetilde\D_i}$ for each $i\in\IN$.

\subsection{On the convergence of domains}
We introduce several different notions of
convergence of a sequence of domains to a limit, so that our 
conditions on the sequence $\kD$ are clearly defined.

\begin{definition}[Mosco convergence]\label{def:Mosco}
We say $\kD$ approximates $\D$ in the sense of Mosco
if the following conditions hold:
\begin{enumerate}[label=(\alph*)]
\item For every $\bU\in\hocurl{\D}$ there exists a sequence
$\{\bU_i\}_{i\in\IN}$, with
$\bU_i \in \hocurl{\widetilde\D_i}$ for all $i\in\IN$, such that
$\bU_i$ converges to $\bU$ strongly in $\hcurl{\D_H}$.\label{it:1def:Mosco}
\item Weak limits in $\hocurl{\D_H}$ of every sequence $\{\bU_i\}_{i\in\IN}$ satisfying
$\bU_i\in\hocurl{\widetilde\D_i}$ for all $i\in\IN$, belong to $\hocurl{\D}$.
\label{it:2def:Mosco}
\end{enumerate}
Note that we have identified each $\bU\in\hocurl{\widetilde\D}$ with its extension by zero to $\hocurl{\D_H}$.
\end{definition}

The notion of Mosco convergence originated in the study of variational
inequalities \cite{mosco1969convergence,mosco2011introduction} with applications to partial differential equations found in \cite{chandler2021boundary,daners2003dirichlet,Pironneau:1984}.
The original definition of Mosco convergence corresponds to the
convergence of the spaces $\hocurl{\widetilde\D_i}$ to $\hcurl{\D}$
rather than to the convergence of the domains $\widetilde\D_i$ to $\D$,
but we choose the latter convention since both are equivalent in our context.

\begin{lemma}\label{lem:mosco}
Let \cref{ass:sesqform,ass:sesqform_approx,ass:extension} hold and
let $\bE$ and $\widetilde\bE_i$ denote the unique solutions of
\cref{prob:varprob,prob:varprob_approx}, respectively.
Assume $\kD$ approximates $\D$ in the sense of Mosco. Then,
$\{\widetilde\bE_{i}\}_{i\in\IN}$ converges to $\bE$ in $\hcurl{D_H}$. 
\end{lemma}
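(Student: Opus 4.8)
The plan is to run the classical Mosco machinery: use the uniform well-posedness of \cref{ass:sesqform_approx} to extract a weak limit, identify it as $\bE$ via the two Mosco conditions together with uniqueness, and only then upgrade to strong convergence. First I would establish a uniform bound. The uniform inf-sup condition in \cref{ass:sesqform_approx} applied to the solution of \cref{prob:varprob_approx} gives, for each $i$,
\[
C_2\norm{\widetilde\bE_i}{\hcurl{\D_H}}\le \sup_{\bV\in\hocurl{\widetilde\D_i}\setminus\{\bnul\}}\frac{\modulo{\Phi(\widetilde\bE_i,\bV)}}{\norm{\bV}{\hcurl{\D_H}}}=\sup_{\bV}\frac{\modulo{\bF(\bV)}}{\norm{\bV}{\hcurl{\D_H}}}\le\norm{\bF}{\hocurl{\D_H}'},
\]
where I used \cref{ass:extension} to view $\bF$ as an element of $\hocurl{\D_H}'$ and the fact that restricting a functional to a subspace does not increase its dual norm. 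As the right-hand side is independent of $i$, the sequence $\{\widetilde\bE_i\}$ is bounded in the Hilbert space $\hcurl{\D_H}$, so a subsequence satisfies $\widetilde\bE_{i_k}\rightharpoonup\bE^\ast$ weakly. Since each $\widetilde\bE_{i_k}\in\hocurl{\widetilde\D_{i_k}}$, condition \ref{it:2def:Mosco} of \cref{def:Mosco} forces $\bE^\ast\in\hocurl{\D}$.

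Next I would identify $\bE^\ast$ with $\bE$. Fix $\bV\in\hocurl{\D}$ and, by condition \ref{it:1def:Mosco} of \cref{def:Mosco}, choose $\bV_i\in\hocurl{\widetilde\D_i}$ with $\bV_i\to\bV$ strongly. Testing \cref{prob:varprob_approx} gives $\Phi(\widetilde\bE_{i_k},\bV_{i_k})=\bF(\bV_{i_k})$, whose right-hand side converges to $\bF(\bV)$. For the left-hand side I split
\[
\Phi(\widetilde\bE_{i_k},\bV_{i_k})=\Phi(\widetilde\bE_{i_k},\bV_{i_k}-\bV)+\Phi(\widetilde\bE_{i_k}-\bE^\ast,\bV)+\Phi(\bE^\ast,\bV),
\]
where the first term vanishes by continuity of $\Phi$ (\cref{ass:sesqform_approx}) applied to a bounded sequence times a strongly null one, and the second vanishes because $\Phi(\cdot,\bV)$ is a continuous functional and $\widetilde\bE_{i_k}\rightharpoonup\bE^\ast$. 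Hence $\Phi(\bE^\ast,\bV)=\bF(\bV)$ for all $\bV\in\hocurl{\D}$, so $\bE^\ast=\bE$ by the uniqueness granted by \cref{ass:sesqform}. Since the limit is the same for every subsequence, the full sequence satisfies $\widetilde\bE_i\rightharpoonup\bE$ weakly in $\hcurl{\D_H}$.

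The remaining, and hardest, step is to upgrade weak to strong convergence, where the indefiniteness of the Maxwell form rules out a one-line coercivity estimate. I would take a recovery sequence $\bU_i\in\hocurl{\widetilde\D_i}$ with $\bU_i\to\bE$ strongly (condition \ref{it:1def:Mosco} applied to $\bE$), set $\bm w_i:=\widetilde\bE_i-\bU_i$, and note $\bm w_i\rightharpoonup\bnul$. Since $\bm w_i\in\hocurl{\widetilde\D_i}$, testing \cref{prob:varprob_approx} yields $\Phi(\widetilde\bE_i,\bm w_i)=\bF(\bm w_i)$, so that $\Phi(\bm w_i,\bm w_i)=\bF(\bm w_i)-\Phi(\bU_i,\bm w_i)$. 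Both terms tend to zero: $\bF(\bm w_i)\to0$ because $\bm w_i\rightharpoonup\bnul$, and $\Phi(\bU_i,\bm w_i)=\Phi(\bU_i-\bE,\bm w_i)+\Phi(\bE,\bm w_i)\to0$ by the same bounded-times-null and continuous-functional-on-weakly-null arguments. Thus $\Phi(\bm w_i,\bm w_i)\to0$.

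To conclude $\norm{\bm w_i}{\hcurl{\D_H}}\to0$ I would then invoke the coercivity (G\aa rding) structure of $\Phi$: the sign conditions exhibited in the Example, namely the existence of $\theta$ with $\Re(e^{\imath\theta}\mu^{-1}),\,\Re(-e^{\imath\theta}\eps)\ge\alpha>0$, give $\alpha\min(1,\omega^2)\norm{\bm w_i}{\hcurl{\D_H}}^2\le\Re\bigl(e^{\imath\theta}\Phi(\bm w_i,\bm w_i)\bigr)\to0$, whence $\bm w_i\to\bnul$ and $\widetilde\bE_i\to\bE$ strongly. I expect this last step to be the main obstacle: under the bare inf-sup hypothesis of \cref{ass:sesqform_approx}, the relation $\Phi(\bm w_i,\bm w_i)\to0$ does not by itself force norm convergence, since $\hcurl{\D_H}$ does not embed compactly into $\Lp{2}{\D_H}$. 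In that generality one would instead need a Helmholtz decomposition of $\bm w_i$ combined with a Weber--Weck--Picard compactness argument on its divergence-free part to control the $\Lp{2}{\D_H}$-term and recover strong convergence.
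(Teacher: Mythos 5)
Your first three steps (uniform bound via the inf-sup condition, extraction of a weak limit lying in $\hocurl{\D}$ by \cref{it:2def:Mosco} of \cref{def:Mosco}, and identification $\bE^\ast=\bE$ by testing against recovery sequences) are correct. The genuine gap is exactly where you flagged it yourself: the upgrade from $\Phi(\bm w_i,\bm w_i)\to 0$ to $\norm{\bm w_i}{\hcurl{\D_H}}\to 0$. Under \cref{ass:sesqform_approx} the form is only inf-sup stable, not coercive, so the quadratic quantity $\Phi(\bm w_i,\bm w_i)$ controls nothing by itself; the sign condition you import from the Example is an \emph{additional} hypothesis that the lemma does not make, so your argument as written proves the lemma only for coercive $\Phi$. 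The fallback you sketch (Helmholtz decomposition plus Weber--Weck--Picard compactness) is not available under the stated assumptions either: the fields $\bm w_i$ live on varying domains $\widetilde\D_i$ for which Mosco convergence provides no uniform regularity, hence no uniform compactness statement for the divergence-free parts in $\Lp{2}{\D_H}$.

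The repair---and the paper's actual argument, which is considerably shorter---is to apply the inf-sup condition not to a quadratic form but \emph{linearly}, to the difference $\widetilde\bE_i-\bE_i$, where $\bE_i\in\hocurl{\widetilde\D_i}$ is the recovery sequence for $\bE$ from \cref{it:1def:Mosco}. Since both terms lie in $\hocurl{\widetilde\D_i}$, \cref{ass:sesqform_approx} yields normalized test functions $\bV_i\in\hocurl{\widetilde\D_i}$ with
\begin{align*}
\frac{C_2}{2}\norm{\widetilde\bE_i-\bE_i}{\hcurl{\D_H}}\leq\modulo{\Phi(\widetilde\bE_i-\bE_i,\bV_i)}=\modulo{\bF(\bV_i)-\Phi(\bE_i,\bV_i)},
\end{align*}
where the equality uses $\Phi(\widetilde\bE_i,\bV_i)=\bF(\bV_i)$. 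Now the limit machinery you already built applies to the right-hand side: along any subsequence, $\bV_i\rightharpoonup\bV\in\hocurl{\D}$ by \cref{it:2def:Mosco}, so $\bF(\bV_i)\to\bF(\bV)$ and $\Phi(\bE_i,\bV_i)\to\Phi(\bE,\bV)$ (strong convergence in the first argument, weak in the second), and the limit $\bF(\bV)-\Phi(\bE,\bV)$ vanishes because $\bE$ solves \cref{prob:varprob} and $\bV\in\hocurl{\D}$. A sub-subsequence argument gives $\norm{\widetilde\bE_i-\bE_i}{\hcurl{\D_H}}\to 0$ for the full sequence, and the triangle inequality with $\bE_i\to\bE$ concludes---with no coercivity, no compactness, and in fact without ever needing your Steps 1--3 (the weak convergence of $\widetilde\bE_i$ itself is never used).
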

\begin{proof}
Let $\{\bE_i\}_{i\in\IN}$ be a sequence as in
\cref{it:1def:Mosco} in \cref{def:Mosco}, strongly converging
to $\bE$ in $\hcurl{\D_H}$. Then, by
\Cref{ass:sesqform_approx}, for each $i\in\IN$ there exists
$\bV_i\in\hocurl{\widetilde\D_i}$, with
$\norm{\bV_i}{\hcurl{\widetilde\D_H}}=1$, such that
\begin{align}      
\frac{ C_2}{2}\norm{\widetilde\bE_{i}-\bE_{i}}{\hcurl{\D_H}}\leq{\modulo{\Phi(\widetilde\bE_{i}-\bE_{i},\bV_{i})}}
=\modulo{\bF(\bV_i)-\Phi(\bE_i,\bV_i)},
\label{eq:ineq_mosco}
\end{align}
where the positive constant $C_2$ is as in \cref{ass:sesqform_approx}.
Moreover, since the sequence $\{\bV_i\}_{i\in\IN}$ is bounded
it has a weakly convergent subsequence\textemdash still denoted
$\{\bV_i\}_{i\in\IN}$\textemdash to a limit point $\bV\in\hocurl{\D}$ due to
\cref{it:2def:Mosco} in \cref{def:Mosco}, so that 
\begin{align*}
\lim\limits_{i\rightarrow\infty}\bF(\bV_i)=\bF(\bV)\quad\mbox{and}\quad\lim\limits_{i\rightarrow\infty}\Phi(\bE_i,\bV_i)=\Phi(\bE,\bV),
\end{align*}
and the result follows by taking the limit as $i$ grows to infinity in \eqref{eq:ineq_mosco}.
\end{proof}



Notice that it is not straightforward to derive convergence rates of
approximate solutions $\widetilde\bE_i$ to $\bE$, since we cannot
estimate $\norm{\bE-\bE_{i}}{\hcurl{\D_H}}$ in
\cref{lem:mosco} without
further assumptions on $\kD$. However, the notion
of Mosco convergence gives minimum conditions to ensure strong
convergence of the approximate solutions.\footnote{The conditions in
\cref{def:Mosco} are further studied in \cite{daners2003dirichlet}
and Lemma 2.7 in \cite{chandler2021boundary}, for example.}

\begin{definition}[Hausdorff convergence]\label{def:Haus}
We say $\kD$ approximates $\D$ in the sense of Hausdorff if
\begin{align*}
\lim\limits_{i\rightarrow\infty}d_{\mathcal{H}}(\overline\D_H\setminus\widetilde\D_i,\overline\D_H\setminus\D)=0,
\end{align*}
where $d_{\mathcal{H}}(\cdot,\cdot)$ denotes the Hausdorff metric between closed subsets of
$\IR^3$, defined as
\begin{align*}
d_{\mathcal{H}}(\Omega_1,\Omega_2):=\max\left\{\sup\limits_{\bx\in\Omega_1}\dist(\bx,\Omega_2),\sup\limits_{\by\in\Omega_2}\dist(\by,\Omega_1)\right\},
\end{align*}
for two closed subsets $\Omega_1$ and $\Omega_2$ of $\IR^3$.
\end{definition}

\begin{lemma}[Lemmas 3 and 4 in \cite{Pironneau:1984}]
\label{lem:HausImpMosc}
Suppose $\kD$ approximates $\D$ in the sense of Hausdorff and that
$\D$ and all $\widetilde\D\in\kD$ are Lipschitz continuous domains
with uniform Lipschitz constant. Then, $\kD$ approximates $\D$ in the sense
of Mosco.
\end{lemma}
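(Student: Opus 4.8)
The plan is to verify the two defining conditions of Mosco convergence (\cref{def:Mosco}) separately, controlling everything by the two one‑sided Hausdorff distances $A_i:=\sup_{\bx\in\overline{\D}_H\setminus\widetilde\D_i}\dist(\bx,\overline{\D}_H\setminus\D)$ and $B_i:=\sup_{\by\in\overline{\D}_H\setminus\D}\dist(\by,\overline{\D}_H\setminus\widetilde\D_i)$, both of which tend to zero by \cref{def:Haus}. Throughout I identify elements of $\hocurl{\D}$ and $\hocurl{\widetilde\D_i}$ with their extensions by $\bnul$ to $\D_H$, and I use two standard facts for Lipschitz domains (\emph{cf.}~\cite{Monk:2003aa}): that $\cD(\D)^3$ is dense in $\hocurl{\D}$, and that a field $\bU\in\hcurl{\D_H}$ vanishing almost everywhere on $\D_H\setminus\overline\D$ satisfies $\bU|_\D\in\hocurl{\D}$, since then its zero extension is $\bU$ itself and lies in $\hcurl{\D_H}$, which is exactly the characterization of vanishing flipped Dirichlet trace.

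The central geometric step I would establish first is $\modulo{\widetilde\D_i\triangle\D}\to 0$, i.e.\ $\chi_{\widetilde\D_i}\to\chi_\D$ in $\lp{1}{\D_H}$, for which the uniform Lipschitz character is essential: it yields a uniform surface bound $\mathcal{H}^{2}(\partial\widetilde\D_i)\le C$ and a uniform collar estimate $\modulo{\{\bx:\dist(\bx,\partial\widetilde\D_i)<t\}}\le C t$ for $t\le t_0$, with $C,t_0$ independent of $i$ (and analogously for $\D$). I then bound each one-sided difference by a collar. If $\bx\in\D\setminus\widetilde\D_i$ then $\bx\in\overline{\D}_H\setminus\widetilde\D_i$, so $\dist(\bx,\partial\D)\le\dist(\bx,\overline{\D}_H\setminus\D)\le A_i$; hence $\D\setminus\widetilde\D_i$ lies in the width-$A_i$ inner collar of $\partial\D$ and $\modulo{\D\setminus\widetilde\D_i}\le C A_i\to0$. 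Symmetrically, if $\bx\in\widetilde\D_i\setminus\overline\D$ then $\bx\in\overline{\D}_H\setminus\D$, so $\dist(\bx,\partial\widetilde\D_i)\le\dist(\bx,\overline{\D}_H\setminus\widetilde\D_i)\le B_i$, placing $\bx$ in the width-$B_i$ inner collar of $\partial\widetilde\D_i$; thus $\modulo{\widetilde\D_i\setminus\D}=\modulo{\widetilde\D_i\setminus\overline\D}\le C B_i\to0$. Summing gives $\modulo{\widetilde\D_i\triangle\D}\to0$.

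For condition \ref{it:1def:Mosco} I would argue by density and diagonal extraction. Fix $\bvphi\in\cD(\D)^3$ with $S:=\supp\bvphi$; since $S$ is compact in $\D$, $\delta:=\dist(S,\overline{\D}_H\setminus\D)>0$. Once $A_i<\delta/2$, every $\bx\in\overline{\D}_H\setminus\widetilde\D_i$ lies within $\delta/2$ of $\overline{\D}_H\setminus\D$, hence at distance $\ge\delta/2$ from $S$, so $S\cap(\overline{\D}_H\setminus\widetilde\D_i)=\emptyset$, i.e.\ $S\subset\widetilde\D_i$ and $\bvphi\in\hocurl{\widetilde\D_i}$. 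Given $\bU\in\hocurl{\D}$, choose $\bvphi_n\to\bU$ in $\hcurl{\D_H}$ with $\bvphi_n\in\cD(\D)^3$, pick an increasing $(i_n)$ with $\supp\bvphi_n\subset\widetilde\D_i$ for all $i\ge i_n$, and set $\bU_i:=\bvphi_n$ for $i_n\le i<i_{n+1}$ and $\bU_i:=\bnul$ for $i<i_1$; then $\bU_i\in\hocurl{\widetilde\D_i}$ and $\bU_i\to\bU$ strongly.

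For condition \ref{it:2def:Mosco}, let $\bU_i\in\hocurl{\widetilde\D_i}$ with $\bU_i\rightharpoonup\bU$ weakly in $\hcurl{\D_H}$; then $\{\bU_i\}$ is bounded in $\Lp{2}{\D_H}$ and $\bU_i=\bnul$ a.e.\ on $\D_H\setminus\widetilde\D_i$. For any $\bvphi\in\cD(\D_H\setminus\overline\D)^3$ with $T:=\supp\bvphi$, weak convergence and Cauchy--Schwarz give $\modulo{(\bU,\bvphi)_{0,\D_H}}=\lim_i\modulo{(\bU_i,\bvphi)_{0,\D_H}}\le\limsup_i \norm{\bU_i}{\Lp{2}{\D_H}}\norm{\bvphi}{\Lp{\infty}{\D_H}}\modulo{\widetilde\D_i\cap T}^{1/2}$, and since $T\cap\overline\D=\emptyset$ the $\lp{1}{\D_H}$-convergence established above yields $\modulo{\widetilde\D_i\cap T}\to\modulo{\D\cap T}=0$, whence $(\bU,\bvphi)_{0,\D_H}=0$. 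Thus $\bU=\bnul$ a.e.\ on $\D_H\setminus\overline\D$, and by the characterization recalled above $\bU\in\hocurl{\D}$. The main obstacle is precisely the geometric indicator-convergence—specifically the exterior bound $\modulo{\widetilde\D_i\setminus\D}\le C B_i$—which is exactly where the uniform Lipschitz hypothesis (through the uniform collar and surface-measure estimates) must be combined with Hausdorff convergence; without uniform control of the boundaries, thin outward spikes of $\widetilde\D_i$ could carry non-vanishing mass outside $\D$ and invalidate \ref{it:2def:Mosco}.
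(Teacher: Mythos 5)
Your proof is correct, and it necessarily takes a different route from the paper, because the paper supplies no proof at all: \cref{lem:HausImpMosc} is quoted directly as Lemmas 3 and 4 of \cite{Pironneau:1984}, where the result is established for $H^1_0$ under the uniform cone property, and the adaptation to $\hocurl{\cdot}$ is left implicit. Your treatment of condition \ref{it:1def:Mosco} in fact coincides with the cited argument: the observation that a compact $S\subset\D$ satisfies $S\subset\widetilde\D_i$ once $A_i<\tfrac{1}{2}\dist(S,\overline\D_H\setminus\D)$ is exactly Pironneau's Lemma 3, which the paper itself reuses verbatim in the proof of \cref{lem:HausImpDiscMosc}; combined with density of $\cD(\D)^3$ in $\hocurl{\D}$ and a diagonal extraction, this is the standard recovery-sequence construction. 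Where you genuinely diverge is condition \ref{it:2def:Mosco}: instead of Pironneau's cone-based localization, you prove $\modulo{\widetilde\D_i\triangle\D}\to 0$ via the two one-sided collar inclusions $\D\setminus\widetilde\D_i\subset\{\dist(\cdot,\partial\D)\le A_i\}$ and $\widetilde\D_i\setminus\overline\D\subset\{\dist(\cdot,\partial\widetilde\D_i)\le B_i\}$ (the segment-crossing argument justifying both is sound and needs no convexity of $\D_H$), then test the weak limit against $\cD(\D_H\setminus\overline\D)^3$ and invoke the zero-extension characterization of $\hocurl{\D}$ on Lipschitz domains. This buys a self-contained, measure-theoretic proof that makes explicit both where the uniform Lipschitz hypothesis enters (ruling out thin outward spikes carrying mass, as you correctly note) and how the $H^1$ statement transfers to the curl-conforming setting. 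Two points deserve tightening: the uniform collar estimate $\modulo{\{\bx:\dist(\bx,\partial\widetilde\D_i)<t\}}\le Ct$ with $C$ independent of $i$ is asserted rather than proved---it does follow from the uniform Lipschitz character together with the common bounded container $\D_H$ (uniformly bounded perimeter and upper Minkowski content), but a one-line justification or a reference would close the loop---and the two ``standard facts'' (density of smooth compactly supported fields in $\hocurl{\D}$, and that a field in $\hcurl{\D_H}$ vanishing a.e.\ outside $\overline\D$ restricts to $\hocurl{\D}$) hold precisely because $\D$ is Lipschitz and should be cited, e.g.\ to \cite{Monk:2003aa}.
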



\cref{lem:HausImpMosc} shows that uniform point-wise approximation---together with mild assumptions on the regularity of $\D$ and $\kD$---implies
Mosco convergence, and so it provides us with sufficient---geometric---conditions that ensure the strong convergence of
$\{\widetilde\bE_i\}_{i\in\IN}$ to $\bE$. Still, the notion is too weak
for us to compute meaningful estimates as it gives almost no information
on the domains {$\widetilde\D$} in $\kD$. 
Instead of the previous definitions of convergence of domains,
we shall consider the following (stronger) notion, which appears in
\cite{DelfZol_2ndEd_2011,Sokolowski:1992} in the context of shape optimization.

\begin{definition}[Convergence in the sense of transformations]\label{def:Trans}
We say $\kD$ approximates $\D$ in the \emph{sense of transformations} of order
$\mathfrak{n}\in\IN_0$
if there exist bijective transformations $\{\bT_{i}\}_{i\in\IN}$ such that:
\begin{gather*}
\bT_i:\D_H\to\D_H,\quad\bT_i\vert_{\widetilde\D_i}:\widetilde\D_i\to\D,\quad\bT_i,\bT_i^{-1}\in \bm{W}^{\mathfrak{n},\infty}(\D_H)\\
\lim\limits_{i\rightarrow\infty}\norm{\bT_i-\Id}{\bm{W}^{\mathfrak{n},\infty}(\D_H)}+\norm{\bT_i^{-1}-\Id}{\bm{W}^{\mathfrak{n},\infty}(\D_H)} = 0.
\end{gather*}
For any transformation $\bT$ satisfying the previous conditions for a domain $\widetilde\D\in\kD$, we denote the associated
discrepancy between $\D$ and $\widetilde\D$, subject to the transformation $\bT$, as 
\begin{align*}
d_\mathfrak{n}(\D_H,\bT):=\norm{\bT-\Id}{\bm{W}^{\mathfrak{n},\infty}(\D_H)}+\norm{\bT^{-1}-\Id}{\bm{W}^{\mathfrak{n},\infty}(\D_H)}.
\end{align*}
\end{definition}

It is straightforward to see that convergence
in the sense of transformations of order zero implies Hausdorff convergence
and that convergence of order one implies, together with the Lipschitz
continuity of $\D$, the results of \cref{lem:HausImpMosc}. We continue our analysis under the following assumption.

\begin{assumption}[Assumptions on $\kD$]\label{ass:app_dom}
We assume that the countable family $\kD$ approximates $\D$ in
the sense of transformations of order $\mathfrak{n}=1$. Moreover,
we assume the respective family of transformations $\{\bT_{i}\}_{i\in\IN}$
is such that
\begin{gather}
d_1(\D_H,\bT_i)<1,\quad d_1(\D_H,\bT_{i+1})<d_1(\D_H,\bT_i),\label{eq:ass_trans_small}\\
\label{eq:vartheta_cond}
\begin{gathered}
 \vartheta^{-1}\leq
 \norm{\det{\rd\bT_i}}{\lp{\infty}{\D_H}},\;\norm{\rd\bT_i}{\Lp{\infty}{\D_H;\IC^{3\times 3}}},\;\norm{{\rd\bT_i}^\co}{\lp{\infty}{\D_H;\IC^{3\times 3}}},
 \leq\vartheta,\\
  \vartheta^{-1}\leq
 \norm{\det{\rd(\bT_i^{-1})}}{\lp{\infty}{\D_H}},\;\norm{\rd(\bT_i^{-1})}{\Lp{\infty}{\D_H;\IC^{3\times 3}}},\;\norm{{\rd(\bT_i^{-1})}^\co}{\lp{\infty}{\D_H;\IC^{3\times 3}}},
 \leq\vartheta,
 \end{gathered}
\end{gather}
for some $\vartheta>1$ and for all $i\in\IN$. 

We will further assume, for simplicity, that all determinants $\det{\rd\bT}$ are
positive almost everywhere on $\D_H$.

\end{assumption}

\begin{remark}
The conditions in \cref{eq:ass_trans_small,eq:vartheta_cond}
only restrict the quality of ``bad'' approximations of
$\D$, as convergence in the sense of transformations of order one implies
\begin{gather*}
\lim\limits_{i\rightarrow\infty} \norm{\det{\rd\bT_i}}{\lp{\infty}{\D_H}},\;\norm{\rd\bT_i}{\Lp{\infty}{\D_H;\IC^{3\times 3}}},\;\norm{{\rd\bT_i}^\co}{\lp{\infty}{\D_H;\IC^{3\times 3}}}
=1,\\
\lim\limits_{i\rightarrow\infty}\norm{\det{\rd(\bT_i^{-1})}}{\lp{\infty}{\D_H}},\;\norm{\rd(\bT_i^{-1})}{\Lp{\infty}{\D_H;\IC^{3\times 3}}},\;\norm{{\rd(\bT_i^{-1})}^\co}{\lp{\infty}{\D_H;\IC^{3\times 3}}}=1.
\end{gather*}
Moreover, by the norm equivalence over finite-dimensional spaces---and some algebra in the last case---
it holds that
\begin{align}
\norm{\rd\bT_i-\bI}{\Lp{\infty}{\D_H}},\;\norm{\rd\bT_i^{-1}-\bI}{\Lp{\infty}{\D_H;\IC^{3\times 3}}},\;\norm{\rd\bT_i^\co-\bI}{\Lp{\infty}{\D_H;\IC^{3\times 3}}}\leq Cd_1(\D_H,\bT_i),\label{eq:CboundMat}
\end{align}
where $C>0$ is independent of $i\in\IN$.
\end{remark}

As aforementioned, we will assess the quality of the
solutions of \cref{prob:varprob_approx} as approximations to the solution of \cref{prob:varprob} in two different ways:
\begin{enumerate}[label=(\alph*)]
\item through isomorphisms $\Psi_i:\hocurl{\D}\to\hocurl{\widetilde\D_i}$ to measure $\norm{\Psi_i\bE-\widetilde\bE_i}{\hcurl{\widetilde\D_i}}$ as $i$ grows towards infinity---equivalently, $\norm{\bE-\Psi_i^{-1}\widetilde\bE_i}{\hcurl{\D}}$---; and,
\item through an appropriate extension to $\D_H$ of $\bE$, allowing us
to measure $\norm{\bE-\widetilde\bE_i}{\hcurl{\widetilde\D_i}}$ as $i$
grows towards infinity.
\end{enumerate}

We now introduce a curl-conforming pull-back
that will act as the mentioned isomorphism between $\hocurl{\widetilde\D}$ and $\hocurl{\D}$.

\begin{lemma}[Lemma 2.2 in \cite{jerez2017electromagnetic}]\label{lem:trans_curl}
For $\widetilde\D\in\kD$, let $\bT:\widetilde{\D}\to\D$ be a continuous, bijective
and bi-Lipschitz mapping from $\widetilde\D$ to $\D$, so that $\bT\in\bW^{1,\infty}(\widetilde\D)$ and
$\bT^{-1}\in\bW^{1,\infty}(\D)$. Then, $\bT$ induces
an isomorphism between $\hocurl{\D}$ and $\hocurl{\widetilde\D}$, given by 
\begin{gather*}
{\Psi}:\left\{\begin{aligned}\hocurl{\D}&\to\hocurl{\widetilde\D}\\
\bU&\mapsto\rd\bT^{\top}(\bU\circ\bT)\end{aligned}\right. .
\end{gather*}
Moreover, it holds that
\begin{gather*}
\curl\Psi(\bU)=\rd\bT^{\co}(\curl\bU\circ\bT)\in\Lp{2}{\widetilde\D}.
\end{gather*}
\end{lemma}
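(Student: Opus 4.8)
The plan is to prove the transformation formula on smooth data and then to propagate it to all of $\hcurl{\D}$ by density, the continuity being supplied by the $\Lp{2}{}$-mapping properties that the bi-Lipschitz hypothesis forces. First I would record these bounds. Since $\bT$ is Lipschitz it is differentiable a.e.\ (Rademacher) with $\rd\bT\in\Lp{\infty}{\widetilde\D;\IC^{3\times3}}$, and the area formula gives $\int_{\widetilde\D}(f\circ\bT)\,\abs{\det{\rd\bT}}\,\ddx=\int_\D f\,\ddy$ for $f\in L^1(\D)$. Combined with $\Psi(\bU)=\rd\bT^\top(\bU\circ\bT)$, the boundedness of $\rd\bT$ and—using bi-Lipschitzness of $\bT^{-1}$—the lower bound on $\abs{\det{\rd\bT}}$, this yields $\norm{\Psi(\bU)}{\Lp{2}{\widetilde\D}}\le C\norm{\bU}{\Lp{2}{\D}}$, with $C$ depending only on the Lipschitz constants of $\bT$ and $\bT^{-1}$. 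The same estimate applied to the (bounded) cofactor matrix shows that $\bU\mapsto\rd\bT^\co(\curl\bU\circ\bT)$ maps $\Lp{2}{\D}$ into $\Lp{2}{\widetilde\D}$ boundedly; hence the right-hand side of the claimed identity is a genuine $\Lp{2}{}$ field, and once the identity is established it delivers at once $\Psi(\bU)\in\hcurl{\widetilde\D}$ together with $\norm{\Psi(\bU)}{\hcurl{\widetilde\D}}\le C\norm{\bU}{\hcurl{\D}}$.

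Next I would establish $\curl\Psi(\bU)=\rd\bT^\co(\curl\bU\circ\bT)$ for smooth data. For any $\bm{\C}^\infty$ map $\bm S$ and any $\bU\in\bm{\C}^\infty(\overline\D)$ one has, writing $(\rd\bm S^\top(\bU\circ\bm S))_i=\sum_k(\partial_i S_k)(U_k\circ\bm S)$ and applying $\curl$, that the contributions involving the Hessian entries $\partial_i\partial_j S_k$ cancel by the antisymmetry of the curl, while the surviving products of first derivatives of $\bm S$ reorganize precisely into $\rd\bm S^\co(\curl\bU\circ\bm S)$. This is the vector-calculus form of the statement that the pull-back of $1$-forms commutes with the exterior derivative, the cofactor encoding the induced action on $2$-forms. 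Crucially, this is a pointwise identity that needs no invertibility of $\bm S$.

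To pass to the merely bi-Lipschitz $\bT$, I would mollify: take smooth $\bT_\eps$ with $\bT_\eps\to\bT$ uniformly, $\rd\bT_\eps\to\rd\bT$ a.e., and $\norm{\rd\bT_\eps}{\Lp{\infty}{\widetilde\D;\IC^{3\times3}}}$ uniformly bounded. For a fixed $\bU\in\bm{\C}^\infty(\overline\D)$ the previous paragraph gives the identity for each $\bT_\eps$; since $\bU$ and $\curl\bU$ are continuous and $\bT_\eps\to\bT$ uniformly, both $\Psi_{\bT_\eps}(\bU)\to\Psi(\bU)$ and $\rd\bT_\eps^\co(\curl\bU\circ\bT_\eps)\to\rd\bT^\co(\curl\bU\circ\bT)$ in $\Lp{2}{\widetilde\D}$ (dominated convergence, using the uniform $\Lp{\infty}{}$ bounds), and the closedness of the operator $\curl$ then transfers the identity to $\bT$ and smooth $\bU$. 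Finally, approximating an arbitrary $\bU\in\hcurl{\D}$ by $\bm{\C}^\infty(\overline\D)$ fields in the $\hcurl{\D}$-norm (\emph{cf.}~\cite{Monk:2003aa}) and invoking the $\Lp{2}{}$-continuity of both sides extends the identity to all of $\hcurl{\D}$. I expect the handling of the limited ($\bm{W}^{1,\infty}$) regularity of $\bT$ through this double limit to be the main obstacle: the naive chain rule is unavailable, and the argument hinges on the fact that the algebraic curl identity requires no differentiability of $\rd\bT$ together with the closedness of $\curl$.

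It remains to show $\Psi$ is an isomorphism of $\hocurl{\D}$ onto $\hocurl{\widetilde\D}$. Bijectivity follows from the analogous pull-back $\Psi_{\bT^{-1}}(\bV)=\rd(\bT^{-1})^\top(\bV\circ\bT^{-1})$ attached to $\bT^{-1}:\D\to\widetilde\D$, which by the same construction is bounded $\hcurl{\widetilde\D}\to\hcurl{\D}$; the chain rule $(\rd(\bT^{-1})\circ\bT)\,\rd\bT=\bI$ gives $\Psi_{\bT^{-1}}\circ\Psi=\Id$ and $\Psi\circ\Psi_{\bT^{-1}}=\Id$ a.e. For the homogeneous trace I would use that $\cD(\D)^3$ is dense in $\hocurl{\D}$ (\emph{cf.}~\cite{Monk:2003aa}): for $\bU_n\in\cD(\D)^3$, the field $\Psi(\bU_n)$ is supported in $\bT^{-1}(\supp\bU_n)$, a compact subset of $\widetilde\D$, so $\Psi(\bU_n)\in\hcurl{\widetilde\D}$ has compact support and therefore lies in $\hocurl{\widetilde\D}$; since $\Psi$ is continuous and $\hocurl{\widetilde\D}$ is closed, $\Psi(\bU)=\lim_n\Psi(\bU_n)\in\hocurl{\widetilde\D}$, and symmetrically $\Psi_{\bT^{-1}}$ maps $\hocurl{\widetilde\D}$ into $\hocurl{\D}$, yielding the claimed isomorphism.
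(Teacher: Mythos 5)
The paper never proves this lemma: it is imported verbatim as Lemma~2.2 of \cite{jerez2017electromagnetic}, so there is no internal argument to compare yours against. Judged on its own terms, your proof is correct and is the standard route for extending the covariant (curl-conforming) pull-back identity from smooth to bi-Lipschitz changes of variables. The three load-bearing observations are all right: the $\Lp{2}{}$-bounds follow from the area formula for Lipschitz maps together with the a.e.\ lower bound on $\modulo{\det{\rd\bT}}$ that Lipschitz continuity of $\bT^{-1}$ forces; the identity $\curl\bigl(\rd\bm{S}^{\top}(\bU\circ\bm{S})\bigr)=\rd\bm{S}^{\co}(\curl\bU\circ\bm{S})$ is indeed pointwise and invertibility-free, since with the paper's convention $\bA^{\co}=\det{\bA}\,\bA^{-1}$ the matrix $\rd\bm{S}^{\co}$ is polynomial in the entries of $\rd\bm{S}$ and the Hessian terms cancel by antisymmetry; and the mollification--plus--closedness-of-$\curl$ scheme, followed by density of $\bm{\C}^{\infty}(\overline{\D})$ in $\hcurl{\D}$ and of $\cD(\D)^3$ in $\hocurl{\D}$ (both legitimate here because $\D$ and $\widetilde\D$ are Lipschitz), transfers the identity and the zero trace correctly.

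Three small points deserve an explicit line in a careful write-up, though none is a genuine gap. First, before mollifying you must extend $\bT$ to a Lipschitz map on all of $\IR^3$ (componentwise McShane or Kirszbraun) and extend $\bU$ to $\bm{\C}^{\infty}(\IR^3)$ (Whitney/Stein): the smoothed $\bT_\eps$ need not map $\widetilde\D$ into $\D$, so $\bU\circ\bT_\eps$ is otherwise undefined; your dominated-convergence limits then go through exactly as you describe. Second, the a.e.\ chain rule $\bigl(\rd(\bT^{-1})\circ\bT\bigr)\rd\bT=\bI$, which underlies both $\Psi_{\bT^{-1}}\circ\Psi=\Id$ and the determinant lower bound, needs the Lusin~N property of the Lipschitz map $\bT^{-1}$ (preimages under $\bT$ of the null set where $\bT^{-1}$ fails to be differentiable are null); this is standard but should be said. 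Third, in the final step the assertion that a compactly supported field in $\hcurl{\widetilde\D}$ lies in $\hocurl{\widetilde\D}$ is most quickly justified by mollifying its zero-extension. With these additions your argument is a complete, self-contained proof of a statement the paper only cites, and it matches in spirit the smooth-case computation in \cite[Sec.~3.9]{Monk:2003aa} upgraded to $\bW^{1,\infty}$ regularity.
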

Since $\bT$ and $\bT^{-1}$ possess analogous properties, the results of
\cref{lem:trans_curl} hold for $\bT^{-1}$ as well. Hence, we shall denote the inverse of
the mapping $\Psi:\hocurl{\D}\to\hocurl{\widetilde\D}$ by $\Psi^{-1}$, for which one has
\begin{gather*}
\begin{gathered}
{\Psi}^{-1}:\left\{\begin{aligned}\hocurl{\widetilde\D}&\to\hocurl{\D}\\
\bU&\mapsto\rd(\bT^{-1})^{\top}(\bU\circ\bT^{-1})\end{aligned}\right. ,\\
\curl\Psi^{-1}(\bU)=\rd(\bT^{-1})^{\co}(\curl\bU\circ\bT^{-1})\in\Lp{2}{\widetilde\D}.
\end{gathered}
\end{gather*}

From here onwards, and for general $\widetilde\D\in\kD$, we refer to the isomorphism introduced in \cref{lem:trans_curl} as $\Psi:\hocurl{\D}\to\hocurl{\widetilde\D}$---respectively, $\Psi_i:\hocurl{\D}\to\hocurl{\widetilde\D_i}$ for each $i\in\IN$.

\begin{lemma}\label{lem:curl_pull_cont}
Let \cref{ass:app_dom} hold.
Then, 1the following bounds are satisfied
\begin{align*}
\norm{\Psi_i\bU}{\hcurl{\widetilde\D_i}}\leq C\norm{\bU}{\hcurl{\D}}\quad\mbox{and}\quad\norm{\Psi_i^{-1}\bV}{\hcurl{\D}}\leq C\norm{\bV}{\hcurl{\widetilde\D_i}},
\end{align*}
for all $\bU\in\hocurl{\D}$ and all $\bV\in\hocurl{\widetilde\D_i}$, where the positive constant $C$ depends on $\vartheta>1$ introduced in \cref{ass:app_dom}, but not on $i\in\IN$.
\end{lemma}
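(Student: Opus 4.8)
The plan is to bound the $\hcurl{\widetilde\D_i}$-norm of $\Psi_i\bU$ by estimating separately its $\Lp{2}{\widetilde\D_i}$-norm and the $\Lp{2}{\widetilde\D_i}$-norm of its curl, using the explicit formulas from \cref{lem:trans_curl} together with the uniform bounds on the transformation Jacobians provided by \cref{ass:app_dom}. By symmetry of the hypotheses on $\bT_i$ and $\bT_i^{-1}$, the second inequality follows from the first applied to $\Psi_i^{-1}$, so I will concentrate on the bound for $\Psi_i\bU$.

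First I would recall that, by \cref{lem:trans_curl}, $\Psi_i\bU=\rd\bT_i^\top(\bU\circ\bT_i)$ and $\curl\Psi_i\bU=\rd\bT_i^\co(\curl\bU\circ\bT_i)$, where $\bT_i$ maps $\widetilde\D_i$ to $\D$. The strategy is then a pointwise-then-integrate argument. For the $\Lp{2}{\widetilde\D_i}$ term, I would write
\begin{align*}
\norm{\Psi_i\bU}{\Lp{2}{\widetilde\D_i}}^2=\int_{\widetilde\D_i}\modulo{\rd\bT_i^\top(\bx)(\bU\circ\bT_i)(\bx)}^2\dd\bx
\leq\norm{\rd\bT_i}{\Lp{\infty}{\D_H;\IC^{3\times3}}}^2\int_{\widetilde\D_i}\modulo{(\bU\circ\bT_i)(\bx)}^2\dd\bx,
\end{align*}
and likewise for the curl term using $\norm{\rd\bT_i^\co}{\lp{\infty}{\D_H;\IC^{3\times3}}}$. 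The remaining integrals of $\bU\circ\bT_i$ and $\curl\bU\circ\bT_i$ are then converted back to integrals over $\D$ by the change of variables $\by=\bT_i(\bx)$, whose Jacobian factor is $\modulo{\det{\rd(\bT_i^{-1})}}$ evaluated at $\by$; this factor is controlled above by $\norm{\det{\rd(\bT_i^{-1})}}{\lp{\infty}{\D_H}}\leq\vartheta$. Collecting the three supremum bounds from \eqref{eq:vartheta_cond} yields $\norm{\Psi_i\bU}{\hcurl{\widetilde\D_i}}\leq C\norm{\bU}{\hcurl{\D}}$ with $C$ depending only on $\vartheta$, as claimed. The bound for $\Psi_i^{-1}$ is identical after swapping the roles of $\bT_i$ and $\bT_i^{-1}$, whose norms obey the very same uniform estimates in \eqref{eq:vartheta_cond}.

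The only genuinely delicate point, rather than an obstacle, is making the change of variables rigorous: since $\bT_i$ is only bi-Lipschitz (an element of $\bm{W}^{1,\infty}$), I would invoke the change-of-variables formula for Lipschitz homeomorphisms rather than the classical smooth one, and note that $\bT_i\vert_{\widetilde\D_i}$ maps $\widetilde\D_i$ onto $\D$ bijectively so that the domains match up exactly under the substitution. It is also worth observing that although the suprema in \eqref{eq:vartheta_cond} are taken over all of $\D_H$, this only enlarges the constants and is harmless since $\widetilde\D_i\subset\D_H$ and $\D\subset\D_H$. Everything else is the routine bookkeeping of collecting constants, so the final $C$ can be taken as $C=\vartheta^{3/2}$ (or any convenient power of $\vartheta$), uniform in $i\in\IN$ by \cref{ass:app_dom}.
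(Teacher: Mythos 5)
Your proposal is correct and follows essentially the same route as the paper's proof: bound $\Psi_i\bU=\rd\bT_i^\top(\bU\circ\bT_i)$ and $\curl\Psi_i\bU=\rd\bT_i^\co(\curl\bU\circ\bT_i)$ pointwise by the $\Lp{\infty}{\D_H;\IC^{3\times 3}}$ bounds of \cref{ass:app_dom}, change variables back to $\D$ picking up $\det{\rd(\bT_i^{-1})}\leq\vartheta$, and obtain the $\Psi_i^{-1}$ bound by the symmetric argument; your remark on the bi-Lipschitz change-of-variables formula is a sound (implicit in the paper) justification. The only cosmetic difference is that the paper's constant also absorbs a norm-equivalence factor relating the component-wise matrix norms in \eqref{eq:vartheta_cond} to the induced norm, so $C=\vartheta^{3/2}$ holds only up to that harmless factor, which your parenthetical already concedes.
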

\begin{proof}
Fix $i\in\IN$ and let $\bU\in\hocurl{\D}$. Then, by
\cref{ass:app_dom}, one has
\begin{align*}
\norm{\Psi_i\bU}{0,{\widetilde\D_i}}^2&=\int_{\widetilde\D_i}\norm{\Psi_i\bU(\bx)}{\IC^3}^2\d\!\bx=\int_{\widetilde\D_i}\norm{\rd\bT_i^\top\bU\circ\bT(\bx)}{\IC^3}^2\d\!\bx\\
&\leq\int_{\widetilde\D_i}\norm{\rd\bT_i^\top(\bx)}{\IC^{3\times 3}}^2\norm{\bU\circ\bT(\bx)}{\IC^3}^2\d\!\bx\leq C\vartheta^2\int_{\widetilde\D_i}\norm{\bU\circ\bT(\bx)}{\IC^3}^2\d\!\bx\\
&=C\vartheta^2\int_{\D}\norm{\bU(\bx)}{\IC^3}^2\;\det{\rd(\bT_i^{-1}(\bx))}\d\!\bx\leq C\vartheta^3\norm{\bU}{0,{\D}}^2,
\end{align*}
where the positive constant $C$ follows from the norm equivalence over finite dimensional spaces.
An analogous computation yields
\begin{align*}
\norm{\curl\Psi_i\bU}{0,{\widetilde\D_i}}^2\leq C\vartheta^3\norm{\curl\bU}{0,{\D}}^2.
\end{align*}
From where the estimate for $\Psi_i$ follows straightforwardly. The estimate for $\Psi_i^{-1}$ is retrieved by repeating
the arguments exposed above.
\end{proof}

The next results follow from arguments similar to
\cite[Prop.~2.32]{Sokolowski:1992} and will be of use throughout (\emph{cf.}~\cite[Lem.~5.1]{Babu_ka_2003}), and whose proofs are provided in \cref{sec:tech_res_T}.

\begin{lemma}\label{lem:error_infty}
Let $\Upsilon$ and $\Omega$ be open Lipschitz domains in
$\IR^3$ such that $\Upsilon$ is convex and $\Omega\subset\Upsilon$.
Let $\bT$ be a continuous, bijective and bi-Lipschitz
transformation\textemdash so that $\bT$ and
$\bT^{-1}$ belong to $\bW^{1,\infty}(\Upsilon)$\textemdash
mapping $\Upsilon$ onto itself. Then, it holds that
\begin{gather*}
\norm{U\circ\bT-U}{\Lp{\infty}{\Omega}}\leq \norm{\bT-\Id}{\lp{\infty}{\Upsilon}}\norm{U}{W^{1,\infty}(\Upsilon)},
\end{gather*}
for all $U\in W^{1,\infty}(\Upsilon)$.
\end{lemma}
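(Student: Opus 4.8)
The plan is to reduce the bound to a one-dimensional fundamental theorem of calculus applied along the straight segment joining $\bx$ to $\bT(\bx)$; this is exactly where the convexity of $\Upsilon$ enters, since it guarantees that such segments remain inside $\Upsilon$. First I would fix a representative of $U$: because $U\in\bm{W}^{1,\infty}(\Upsilon)$ and $\Upsilon$ is convex, $U$ admits a Lipschitz-continuous representative on $\overline\Upsilon$ whose Lipschitz constant is controlled by the essential supremum of $\modulo{\nabla U}$, and I shall work with this representative throughout so that all pointwise evaluations below are meaningful. Note also that $U\circ\bT$ defines an element of $\lp{\infty}{\Omega}$: since $\bT$ is bi-Lipschitz it maps null sets to null sets, and composing with the chosen continuous representative makes $U\circ\bT$ well defined pointwise on $\Omega\subset\Upsilon$.

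The core estimate is the inequality
\begin{align*}
\modulo{U(\by)-U(\bx)}\leq\norm{\nabla U}{\Lp{\infty}{\Upsilon}}\,\modulo{\by-\bx}\qquad\forall\;\bx,\by\in\Upsilon.
\end{align*}
For $C^1$ functions this is immediate from $U(\by)-U(\bx)=\int_0^1\nabla U(\bx+t(\by-\bx))\cdot(\by-\bx)\,\rd t$, the segment $\bx+t(\by-\bx)$ lying in $\Upsilon$ by convexity. For general $U\in\bm{W}^{1,\infty}(\Upsilon)$ I would argue by mollification: on the convex subdomains $\Upsilon_\eps:=\{\bz\in\Upsilon:\dist(\bz,\partial\Upsilon)>\eps\}$ the mollifications $U_\eps:=U*\rho_\eps$ are smooth and satisfy $\norm{\nabla U_\eps}{\Lp{\infty}{\Upsilon_\eps}}\leq\norm{\nabla U}{\Lp{\infty}{\Upsilon}}$, so the $C^1$ estimate holds for $U_\eps$ on $\Upsilon_\eps$; letting $\eps\to0$ and using uniform convergence of $U_\eps$ to the Lipschitz representative on compact subsets (with continuity up to $\overline\Upsilon$) yields the displayed inequality for all $\bx,\by\in\Upsilon$.

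To finish, I would specialize the core estimate to $\by=\bT(\bx)$ for $\bx\in\Omega\subset\Upsilon$, which is admissible because $\bT$ maps $\Upsilon$ onto itself and hence $\bT(\bx)\in\Upsilon$. This gives, for almost every $\bx\in\Omega$,
\begin{align*}
\modulo{U(\bT(\bx))-U(\bx)}\leq\norm{\nabla U}{\Lp{\infty}{\Upsilon}}\,\modulo{\bT(\bx)-\bx}\leq\norm{\nabla U}{\Lp{\infty}{\Upsilon}}\,\norm{\bT-\Id}{\lp{\infty}{\Upsilon}}.
\end{align*}
Taking the essential supremum over $\bx\in\Omega$ and bounding $\norm{\nabla U}{\Lp{\infty}{\Upsilon}}\leq\norm{U}{W^{1,\infty}(\Upsilon)}$ yields the claim. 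I expect the only genuinely delicate step to be the passage from the $C^1$ identity to general $\bm{W}^{1,\infty}$ data: convexity must be invoked twice—once to keep the mollification subdomains $\Upsilon_\eps$ convex (so the $C^1$ estimate applies there) and once to ensure the integrating segments stay inside $\Upsilon$—whereas the remaining manipulations are a direct computation.
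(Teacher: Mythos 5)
Your proof is correct and follows the same core route as the paper: the fundamental theorem of calculus along the segment $(1-t)\bx+t\bT(\bx)$, with convexity of $\Upsilon$ guaranteeing the segment stays inside, followed by the pointwise bound $\modulo{U\circ\bT(\bx)-U(\bx)}\leq \norm{\bT-\Id}{\Lp{\infty}{\Upsilon}}\norm{\nabla U}{\Lp{\infty}{\Upsilon}}$ and a supremum over $\Omega$. The one place where you diverge is the passage from smooth to general $W^{1,\infty}$ data, and there your treatment is actually more careful than the paper's: the paper simply invokes ``density of $\C^\infty(\Upsilon)$ in $W^{1,\infty}(\Upsilon)$,'' which is false in the $W^{1,\infty}$-norm topology (norm-limits of smooth functions in $W^{1,\infty}$ are $C^1$, while generic $W^{1,\infty}$ functions are merely Lipschitz), so the paper's final line is, as written, a small gap. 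Your repair---fixing the Lipschitz representative (available since $\Upsilon$ is convex, with Lipschitz constant $\norm{\nabla U}{\Lp{\infty}{\Upsilon}}$), mollifying on the inner parallel sets $\Upsilon_\eps$ (which remain convex, and which eventually contain any fixed compact segment $[\bx,\bT(\bx)]\subset\Upsilon$), using $\norm{\nabla U_\eps}{\Lp{\infty}{\Upsilon_\eps}}\leq\norm{\nabla U}{\Lp{\infty}{\Upsilon}}$, and passing to the limit by locally uniform convergence---is the standard rigorous completion, and your observation that the bi-Lipschitz $\bT$ preserves null sets (so $U\circ\bT$ is a well-defined element of $\lp{\infty}{\Omega}$ independently of the representative) is a point the paper leaves implicit. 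In short: same idea, but your version is airtight where the paper's density appeal is loose.
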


\begin{lemma}\label{lem:trans_approx}
Let $\Upsilon$ and $\Omega$ be open Lipschitz domains in
$\IR^3$ such that $\Upsilon$ is convex and $\Omega\subset\Upsilon$.
Let $\bT$ be a continuous, bijective and bi-Lipschitz
transformation\textemdash $\bT$ and
$\bT^{-1}$ belong to $\bW^{1,\infty}(\Upsilon)$\textemdash
mapping $\Upsilon$ onto itself and such that
\begin{align}
{\sup_{\substack{\bx,\by\in\Upsilon \\ \bx\neq\by}}\frac{\norm{(\bT(\bx)-\bx)-(\bT(\by)-\by)}{\IR^3}}{\norm{\bx-\by}{\IR^3}}\leq\kappa<1\quad\mbox{and}\quad \vartheta^{-1}\leq\norm{\det{\rd\bT}}{\lp{\infty}{\Upsilon}}\leq\vartheta},\label{eq:t_cond_lem}
\end{align}
for some $\kappa\in (0,1)$ and $\vartheta>1$. Then, one has
\begin{gather*}
\norm{\bU\circ\bT-\bU}{0,{\Omega}}\leq (\vartheta^\half+1)\norm{\bT-\Id}{\Lp{\infty}{\Upsilon}}^s\norm{\bU}{s,{\Upsilon}},
\end{gather*}
for all $\bU\in\Hsob{s}{\Upsilon}$, with $0\leq s\leq 1$.
\end{lemma}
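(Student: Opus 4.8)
The plan is to prove the estimate at the two endpoints $s=0$ and $s=1$ and then recover the whole range $0\le s\le 1$ by interpolation. Throughout it suffices to argue componentwise: both $\norm{\bU\circ\bT-\bU}{0,\Omega}^2$ and $\norm{\bU}{s,\Upsilon}^2$ split as sums over the scalar components of $\bU$, so I fix a scalar $U\in\hsob{s}{\Upsilon}$, bound $\norm{U\circ\bT-U}{0,\Omega}$, and sum at the end.

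For $s=0$ I would combine the triangle inequality with a single change of variables. Since $\bT$ maps $\Upsilon$ onto itself and $\Omega\subseteq\Upsilon$, we have $\bT(\Omega)\subseteq\Upsilon$; substituting $\by=\bT(\bx)$ in $\norm{U\circ\bT}{0,\Omega}^2$ and using that the lower determinant bound in \eqref{eq:t_cond_lem} controls the Jacobian of the inverse substitution gives $\norm{U\circ\bT}{0,\Omega}^2\le\vartheta\,\norm{U}{0,\Upsilon}^2$. Hence $\norm{U\circ\bT-U}{0,\Omega}\le\norm{U\circ\bT}{0,\Omega}+\norm{U}{0,\Omega}\le(\vartheta^{\half}+1)\norm{U}{0,\Upsilon}$, which is exactly the claimed inequality at $s=0$ (where $\norm{\bT-\Id}{\Lp{\infty}{\Upsilon}}^{0}=1$). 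Note that convexity is not needed here, only the self-mapping property.

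For $s=1$ I would first take $U\in\C^\infty(\overline{\Upsilon})$ by density and write, along the segment joining $\bx$ to $\bT(\bx)$, the identity $U(\bT(\bx))-U(\bx)=\int_0^1\nabla U(\bT_t(\bx))\cdot(\bT(\bx)-\bx)\,\dd t$, where $\bT_t:=(1-t)\Id+t\bT$. This is where convexity of $\Upsilon$ and the inclusion $\Omega\subseteq\Upsilon$ are essential: for $\bx\in\Omega$ both $\bx$ and $\bT(\bx)$ lie in $\Upsilon$, so the entire segment, and thus each $\bT_t(\bx)$, stays in $\Upsilon$, and $\nabla U$ is only ever evaluated on $\Upsilon$. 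Bounding $\abs{\bT(\bx)-\bx}\le\norm{\bT-\Id}{\Lp{\infty}{\Upsilon}}$, applying Cauchy--Schwarz in $t$, squaring, integrating over $\Omega$ and using Fubini reduces everything to controlling $\int_0^1\!\int_\Omega\abs{\nabla U(\bT_t(\bx))}^2\,\dd\bx\,\dd t$. The contraction condition $\kappa<1$ in \eqref{eq:t_cond_lem} makes each $\bT_t$ a bi-Lipschitz bijection of $\Upsilon$, with $\rd\bT_t=\bI+t\,\rd(\bT-\Id)$ having singular values bounded below uniformly in $t$, hence Jacobian bounded below; changing variables $\by=\bT_t(\bx)$ (again $\bT_t(\Omega)\subseteq\Upsilon$ by convexity) turns the inner integral into a bounded multiple of $\seminorm{U}{1,\Upsilon}^2$. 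This yields a bound of the form $\norm{U\circ\bT-U}{0,\Omega}\le C\,\norm{\bT-\Id}{\Lp{\infty}{\Upsilon}}\norm{U}{1,\Upsilon}$.

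Finally I would regard $U\mapsto U\circ\bT-U$ as a linear map that is bounded $\lp{2}{\Upsilon}\to\lp{2}{\Omega}$ and $\hsob{1}{\Upsilon}\to\lp{2}{\Omega}$ with the two operator norms just found, and interpolate: since $[\lp{2}{\Upsilon},\hsob{1}{\Upsilon}]_s=\hsob{s}{\Upsilon}$, the interpolated norm is bounded by the product of the endpoint norms raised to the powers $1-s$ and $s$, which simultaneously produces the factor $\norm{\bT-\Id}{\Lp{\infty}{\Upsilon}}^{s}$ and the prefactor in the stated form; summing over components returns the vector-valued estimate. The main obstacle is the $s=1$ step: one must check that the auxiliary maps $\bT_t$ are genuinely invertible, that they carry $\Omega$ into $\Upsilon$, and that their Jacobians are bounded below \emph{uniformly} in $t\in[0,1]$ --- precisely the roles of the contraction hypothesis $\kappa<1$ and the convexity of $\Upsilon$ --- so that the change of variables is licit and the endpoint constant can be arranged to interpolate to $(\vartheta^{\half}+1)$.
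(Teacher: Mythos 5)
Your proposal is correct and follows essentially the same route as the paper's proof: the $s=0$ endpoint by triangle inequality and change of variables, the $s=1$ endpoint via the segment identity $U(\bT(\bx))-U(\bx)=\int_0^1\nabla U(\bT_t(\bx))\cdot(\bT(\bx)-\bx)\,\dd t$ with $\bT_t=(1-t)\Id+t\bT$ (convexity keeping $\bT_t(\bx)\in\Upsilon$, the contraction hypothesis making $\bT_t$ injective so the change of variables is licit), and real interpolation for $0<s<1$, exactly as in the paper's appendix. The one point of divergence is instructive: where the paper simply asserts that $\bT_t$ ``satisfies all the conditions in \eqref{eq:t_cond_lem}'' and uses $\det{\rd\bT_t}^{-1}\leq\vartheta$, you justify the Jacobian lower bound explicitly via the singular values of $\rd\bT_t=\bI+t\,\rd(\bT-\Id)$, which gives $\det{\rd\bT_t}\geq(1-\kappa)^3$ --- a cleaner argument that fills a step the paper glosses over, but it yields the endpoint constant $(1-\kappa)^{-3/2}$ at $s=1$ instead of $\vartheta^{\half}$, so interpolation produces $(\vartheta^{\half}+1)^{1-s}(1-\kappa)^{-3s/2}$ and your closing claim that the constant ``can be arranged'' to equal $(\vartheta^{\half}+1)$ does not follow as written, since $\kappa$ and $\vartheta$ are independent parameters; to recover the stated constant verbatim you would need the pointwise bound $\det{\rd\bT_t}\geq\vartheta^{-1}$ that the paper tacitly assumes (or accept a constant additionally depending on $\kappa$, which is harmless for every application of the lemma in the paper).
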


\subsection{Convergence of solution pull-backs in approximate domains}
\label{ssec:pullback_conv_smooth}
We begin by estimating the convergence to zero of the following approximation error:
\begin{align*}
\norm{\Psi_i\bE-\widetilde\bE_i}{\hcurl{\widetilde\D_i}},
\end{align*}
through an application of Strang's lemma \cite[Thm.~4.2.11]{SauterSchwabBEM}.
As in \cite{AJZS18,jerez2017electromagnetic}, we note that if
$\bE\in\hocurl{\D}$ is the unique solution of \cref{prob:varprob},
then $\Psi\bE\in\hocurl{\widetilde\D}$ is the unique solution of a modified Maxwell problem on $\widetilde\D$
arising from transferring the sesquilinear and antilinear forms $\Phi(\cdot,\cdot)$ and $\bF(\cdot)$ from
$\D$ to $\widetilde\D$ by a change of variables (\emph{cf.}~in \cite[Sec.~2.5.2]{AJZS18}). Specifically, for $\widetilde\D\in\kD$, we introduce the modified sesqulinear and antilinear forms as
\begin{align}\label{eq:pert_ses_ant}
\widehat\Phi(\bU,\bV):=\Phi(\Psi^{-1}\bU,\Psi^{-1}\bV)\quad\mbox{and}\quad \widehat\bF(\bV):=\bF(\Psi^{-1}\bV),
\end{align}
for all $\bU,\bV\in\hocurl{\widetilde\D}$.

\begin{problem}(Modified variational problem on $\widetilde\D$)
\label{prob:varprob_modified}
Find $\widehat\bE\in\hocurl{\widetilde\D}$ such that
\begin{align*}
\widehat\Phi(\widehat\bE,\bV)=\widehat\bF(\bV)\quad\forall\,\bV\in\hocurl{\widetilde\D}.
\end{align*}
\end{problem}

\begin{proposition}\label{prop:mod_prob_equiv}
Let \cref{ass:sesqform,ass:app_dom} hold and
let $\bE$ denote the solution of
\cref{prob:varprob}. Then, $\Psi\bE\in\hocurl{\widetilde\D}$ is the unique solution of \cref{prob:varprob_modified}.
\end{proposition}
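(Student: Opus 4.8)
The plan is to treat \cref{prob:varprob_modified} as the pushforward of \cref{prob:varprob} under the isomorphism $\Psi:\hocurl{\D}\to\hocurl{\widetilde\D}$ from \cref{lem:trans_curl} (with the mapping properties of \cref{lem:curl_pull_cont}), so that solutions of the two problems are placed in bijection via $\Psi$. The argument is purely algebraic and rests entirely on the bijectivity of $\Psi$ together with the definitions in \eqref{eq:pert_ses_ant}; well-posedness, and hence uniqueness, of the original problem is already guaranteed by \cref{ass:sesqform}, so no new estimates are required.

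First I would verify that the candidate $\Psi\bE$ satisfies the modified variational identity. Fixing an arbitrary test function $\bV\in\hocurl{\widetilde\D}$, the definition of $\widehat\Phi$ in \eqref{eq:pert_ses_ant} together with $\Psi^{-1}\Psi=\Id$ yields
\begin{align*}
\widehat\Phi(\Psi\bE,\bV)=\Phi(\Psi^{-1}\Psi\bE,\Psi^{-1}\bV)=\Phi(\bE,\Psi^{-1}\bV).
\end{align*}
Since $\Psi^{-1}\bV\in\hocurl{\D}$ is an admissible test function for \cref{prob:varprob}, which $\bE$ solves, the right-hand side equals $\bF(\Psi^{-1}\bV)=\widehat\bF(\bV)$, again by \eqref{eq:pert_ses_ant}. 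This shows that $\Psi\bE$ is a solution of \cref{prob:varprob_modified}.

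Next I would establish uniqueness by running the correspondence in reverse. Given any solution $\widehat\bE\in\hocurl{\widetilde\D}$ of \cref{prob:varprob_modified}, I would show that $\Psi^{-1}\widehat\bE\in\hocurl{\D}$ solves \cref{prob:varprob}: for arbitrary $\bW\in\hocurl{\D}$ the field $\Psi\bW$ is an admissible test function on $\widetilde\D$, and
\begin{align*}
\Phi(\Psi^{-1}\widehat\bE,\bW)=\widehat\Phi(\widehat\bE,\Psi\bW)=\widehat\bF(\Psi\bW)=\bF(\bW).
\end{align*}
Because $\bE$ is the unique solution of \cref{prob:varprob} under \cref{ass:sesqform}, this forces $\Psi^{-1}\widehat\bE=\bE$, i.e.\ $\widehat\bE=\Psi\bE$, which simultaneously yields uniqueness and confirms the candidate.

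There is no genuine analytic obstacle here; the only point requiring care is that test functions be transported correctly between the two spaces. Concretely, one must invoke the surjectivity of $\Psi$ (equivalently of $\Psi^{-1}$) so that $\Psi^{-1}\bV$ exhausts $\hocurl{\D}$ as $\bV$ ranges over $\hocurl{\widetilde\D}$, and likewise $\Psi\bW$ exhausts $\hocurl{\widetilde\D}$; this guarantees that the variational identities above are genuinely tested over the entire spaces. (Alternatively, one could observe that $\widehat\Phi$ inherits the continuity and inf-sup conditions of \cref{ass:sesqform} directly through \cref{lem:curl_pull_cont}, but the pullback correspondence gives the cleaner route.)
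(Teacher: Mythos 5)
Your proof is correct, but it handles uniqueness by a genuinely different mechanism than the paper. Both arguments verify the identity $\widehat\Phi(\Psi\bE,\bV)=\Phi(\bE,\Psi^{-1}\bV)=\bF(\Psi^{-1}\bV)=\widehat\bF(\bV)$ to show that $\Psi\bE$ is a solution of \cref{prob:varprob_modified}. For uniqueness, however, the paper first proves that \cref{prob:varprob_modified} is well posed in its own right: it checks continuity of $\widehat\Phi$ and $\widehat\bF$ and, crucially, transfers both inf-sup conditions from \cref{ass:sesqform} to $\widehat\Phi$ via \cref{lem:curl_pull_cont}, obtaining the quantitative inf-sup constant $C^{-2}C_2$ and then invoking the abstract Banach--Ne\v{c}as--Babu\v{s}ka theory. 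You instead transport an arbitrary solution $\widehat\bE$ of the modified problem back through $\Psi^{-1}$, observe that it solves \cref{prob:varprob} (using surjectivity of $\Psi$ so that $\Psi\bW$ exhausts the test space), and conclude $\widehat\bE=\Psi\bE$ from the uniqueness already granted for \cref{prob:varprob}. Your route is more elementary and entirely avoids re-deriving stability estimates, which is a legitimate simplification for this proposition in isolation. What the paper's heavier route buys is reuse downstream: the continuity bounds and the inf-sup constant for $\widehat\Phi$ established inside this proof are cited again in \cref{prop:mod_disc_prob}, where the discrete modified problem is treated by a perturbation argument on those very constants --- and there your bijection trick is unavailable, since $\Psi$ does not map the discrete space $\bm{P}^c_0(\tau_h)$ onto itself. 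So your argument proves the stated proposition cleanly, but if one adopted it in the paper, the quantitative inf-sup transfer would still have to be proved separately for the discrete analysis; your closing parenthetical correctly identifies this alternative but inverts which route carries the load in the paper's overall architecture.
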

\begin{proof}
Take $\bU,\bV\in\hocurl{\widetilde\D}$. Then, with $C_1>0$ as in \cref{ass:sesqform} and $C>0$ as in \cref{lem:curl_pull_cont}, we have that
\begin{align*}
\vert{\widehat\Phi(\bU,\bV)}\vert
&=\modulo{\Phi(\Psi^{-1}\bU,\Psi^{-1}\bV)}\leq C_1\norm{\Psi^{-1}\bU}{\hcurl{\D}}\norm{\Psi^{-1}\bV}{\hcurl{\D}}\\
&\leq C_1C^2\norm{\bU}{\hcurl{\widetilde\D}}	\norm{\bV}{\hcurl{\widetilde\D}},
\end{align*}
and
\begin{align*}
\vert{\widehat\bF(\bV)}\vert
&=\modulo{\bF(\Psi^{-1}\bV)}\leq\norm{\bF}{\hocurl{\D}'}\norm{\Psi^{-1}\bV}{\hcurl{\D}}
\leq C\norm{\bF}{\hocurl{\D}'}\norm{\bV}{\hcurl{\widetilde\D}}.
\end{align*}
Moreover, since $\Psi:\hocurl{\D}\to\hocurl{\widetilde\D}$ is an isomorphism, for every
$\bU\in\hocurl{\widetilde\D}$ it holds that
\begin{align*}
&\sup_{\bV\in\hocurl{\widetilde\D}\setminus\{\bnul\}}\frac{\vert{\widehat\Phi(\bU,\bV)}\vert}{\norm{\bU}{\hocurl{\widetilde\D}}\norm{\bV}{\hocurl{\widetilde\D}}}\\
&\geq C^{-2}
\sup_{\bV\in\hocurl{\widetilde\D}\setminus\{\bnul\}}\frac{\vert{\Phi(\Psi^{-1}\bU,\Psi^{-1}\bV)}\vert}{\norm{\Psi^{-1}\bU}{\hocurl{\widetilde\D}}\norm{\Psi^{-1}\bV}{\hocurl{\widetilde\D}}}\geq C^{-2}C_2,
\end{align*}
where the positive constant ${C}_2$ is as in \cref{ass:sesqform} and $C>0$ comes from \cref{lem:curl_pull_cont}.
Moreover, for every $\bV\in\hocurl{\widetilde\D}\setminus\{\bnul\}$ we have that
\begin{align*}
\sup\limits_{\bU\in\hocurl{\widetilde\D}\setminus\{\bnul\}}\vert{\widehat\Phi(\bU,\bV)}\vert
= \sup\limits_{\bU\in\hocurl{\D}\setminus\{\bnul\}}\vert{\Phi(\bU,\Psi^{-1}\bV)}\vert >0.
\end{align*}
Hence, since $\widehat\Phi(\cdot,\cdot)$ satisfies the inf-sup conditions,
we can conclude that \cref{prob:varprob_modified} is well posed and has
a unique solution in $\hocurl{\widetilde\D}$ \cite[Sec.~2.1.6]{SauterSchwabBEM}.
Moreover, since $\bE\in\hocurl{\D}$ solves \cref{prob:varprob} there
holds that
\begin{align*}
\widehat\Phi(\Psi\widetilde\bE,\bV)=\Phi(\widetilde\bE,\Psi^{-1}\bV)=\bF(\Psi^{-1}\bV)=\widehat\bF(\bV),
\end{align*}
for all $\bV\in\hocurl{\widetilde\D}$, and so $\Psi\bE\in\hocurl{\widetilde\D}$ is the unique solution of \cref{prob:varprob_modified}.
\end{proof}

\begin{theorem}\label{thm:pull_result_cont}
Let \cref{ass:sesqform,ass:extension,ass:sesqform_approx,ass:app_dom} hold
and let $\bE$ and $\bE_i$ denote the unique solutions of \cref{prob:varprob,prob:varprob_approx} on $\widetilde\D_i$ for each $i\in\IN$. Moreover, assume that
$\mu^{-1}$, $\epsilon$ and $\bJ$ have coefficients in $W^{1,\infty}(\D_H)$.
Then, it holds that
\begin{align*}
\norm{\Psi_i\bE-\widetilde\bE_i}{\hcurl{\widetilde\D_i}}\leq C d_1(\D_H,\bT_i)(\norm{\bE}{\hcurl{\D}}+\norm{\bJ}{\bm{W}^{1,\infty}(\D_H)}),
\end{align*}
where $C$ depends on $\omega$, $\mu$, $\epsilon$ and $\bJ$ but is independent of $i\in\IN$.
\end{theorem}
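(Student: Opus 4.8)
The plan is to cast the estimate as a Strang-type consistency argument on the \emph{fixed} space $\hocurl{\widetilde\D_i}$, comparing the two variational problems solved there by $\Psi_i\bE$ and $\widetilde\bE_i$. By \cref{prop:mod_prob_equiv}, $\Psi_i\bE$ is the unique solution of \cref{prob:varprob_modified}, i.e.\ $\widehat\Phi(\Psi_i\bE,\bV)=\widehat\bF(\bV)$ with $\widehat\Phi(\bU,\bV)=\Phi(\Psi_i^{-1}\bU,\Psi_i^{-1}\bV)$ and $\widehat\bF(\bV)=\bF(\Psi_i^{-1}\bV)$, while $\widetilde\bE_i$ solves $\Phi(\widetilde\bE_i,\bV)=\bF(\bV)$ for all $\bV\in\hocurl{\widetilde\D_i}$. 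Invoking the inf--sup condition of \cref{ass:sesqform_approx} for $\Phi$ on $\hocurl{\widetilde\D_i}$, I would write
\begin{align*}
C_2\norm{\widetilde\bE_i-\Psi_i\bE}{\hcurl{\widetilde\D_i}}\leq\sup_{\bV\neq\bnul}\frac{\modulo{\Phi(\widetilde\bE_i-\Psi_i\bE,\bV)}}{\norm{\bV}{\hcurl{\widetilde\D_i}}},
\end{align*}
and then use $\Phi(\widetilde\bE_i,\bV)=\bF(\bV)$ together with $\widehat\Phi(\Psi_i\bE,\bV)=\widehat\bF(\bV)$ to split the numerator into two consistency errors,
\begin{align*}
\Phi(\widetilde\bE_i-\Psi_i\bE,\bV)=\big(\bF(\bV)-\widehat\bF(\bV)\big)+\big(\widehat\Phi(\Psi_i\bE,\bV)-\Phi(\Psi_i\bE,\bV)\big).
\end{align*}

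The next step is to turn both consistency errors into integrals over $\widetilde\D_i$ so that they can be bounded pointwise. Pulling $\widehat\Phi$ and $\widehat\bF$ back from $\D$ to $\widetilde\D_i$ through the change of variables $\bx=\bT_i(\by)$ and the covariant/Piola identities of \cref{lem:trans_curl}, I obtain
\begin{align*}
\widehat\Phi(\bU,\bV)=\int_{\widetilde\D_i}\widehat{\mu}^{-1}\curl\bU\cdot\curl\overline\bV-\omega^2\widehat{\epsilon}\,\bU\cdot\overline\bV\ddy,\quad \widehat\bF(\bV)=-\imath\omega\int_{\widetilde\D_i}\widehat{\bJ}\cdot\overline\bV\ddy,
\end{align*}
with transformed coefficients
\begin{align*}
\widehat{\mu}^{-1}=\frac{\rd\bT_i^\top(\mu^{-1}\circ\bT_i)\rd\bT_i}{\det{\rd\bT_i}},\quad \widehat{\epsilon}=\det{\rd\bT_i}\,\rd\bT_i^{-1}(\epsilon\circ\bT_i)\rd\bT_i^{-\top},\quad \widehat{\bJ}=\rd\bT_i^\co(\bJ\circ\bT_i).
\end{align*}
Since on $\widetilde\D_i$ the form $\Phi$ and the functional $\bF$ carry the untransformed extension coefficients $\mu^{-1},\epsilon,\bJ$, the first consistency error becomes $\int_{\widetilde\D_i}(\widehat{\mu}^{-1}-\mu^{-1})\curl\bU\cdot\curl\overline\bV-\omega^2(\widehat{\epsilon}-\epsilon)\bU\cdot\overline\bV\ddy$ and the second becomes $-\imath\omega\int_{\widetilde\D_i}(\bJ-\widehat{\bJ})\cdot\overline\bV\ddy$; by Cauchy--Schwarz these are controlled by $\norm{\widehat{\mu}^{-1}-\mu^{-1}}{\Lp{\infty}{\widetilde\D_i}}$, $\norm{\widehat{\epsilon}-\epsilon}{\Lp{\infty}{\widetilde\D_i}}$ and $\norm{\bJ-\widehat{\bJ}}{0,\widetilde\D_i}$ times $\norm{\bU}{\hcurl{\widetilde\D_i}}\norm{\bV}{\hcurl{\widetilde\D_i}}$ (respectively $\norm{\bV}{\hcurl{\widetilde\D_i}}$).

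The core of the argument---and the step I expect to be the main obstacle---is to show these three coefficient discrepancies are $\bigO(d_1(\D_H,\bT_i))$. For each I would add and subtract the composition, splitting into a \emph{matrix-factor} and a \emph{composition} contribution, e.g.
\begin{align*}
\widehat{\mu}^{-1}-\mu^{-1}=\Big(\det{\rd\bT_i}^{-1}\rd\bT_i^\top(\mu^{-1}\circ\bT_i)\rd\bT_i-\mu^{-1}\circ\bT_i\Big)+\big(\mu^{-1}\circ\bT_i-\mu^{-1}\big).
\end{align*}
The composition contribution is bounded via \cref{lem:error_infty} applied componentwise (using $\mu^{-1},\epsilon,\bJ\in W^{1,\infty}(\D_H)$ and $\widetilde\D_i\subset\D_H$), which produces a factor $\norm{\bT_i-\Id}{\lp{\infty}{\D_H}}\leq d_1(\D_H,\bT_i)$; for $\bJ-\widehat{\bJ}$ I would pass from the $\Lp{\infty}{}$ to the $\Lp{2}{}$ bound using the uniformly bounded measure of $\widetilde\D_i$, or equivalently invoke \cref{lem:trans_approx} with $s=1$. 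The matrix-factor contribution is handled by writing $\det{\rd\bT_i}^{-1}\big(\rd\bT_i^\top(\cdot)\rd\bT_i-\det{\rd\bT_i}(\cdot)\big)$, pulling $\norm{\mu^{-1}}{\Lp{\infty}{}}$ out and using \eqref{eq:CboundMat} (together with the ensuing $\modulo{\det{\rd\bT_i}-1}\leq Cd_1(\D_H,\bT_i)$ and $\norm{\rd\bT_i^\co-\bI}{}\leq Cd_1(\D_H,\bT_i)$) and the uniform bounds of \cref{ass:app_dom} on $\det{\rd\bT_i}$ and its cofactor; the cofactor form $\widehat{\bJ}=\rd\bT_i^\co(\bJ\circ\bT_i)$ makes its matrix factor simply $\bI-\rd\bT_i^\co$. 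Tracking all these constants uniformly in $i$ is the delicate bookkeeping, and the convexity hypothesis of \cref{lem:error_infty,lem:trans_approx} is met by enlarging $\D_H$ to a convex hold-all if necessary.

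Collecting the estimates, both consistency errors are bounded by $Cd_1(\D_H,\bT_i)$ times $\norm{\bU}{\hcurl{\widetilde\D_i}}\norm{\bV}{\hcurl{\widetilde\D_i}}$ (respectively $\norm{\bV}{\hcurl{\widetilde\D_i}}$), with $C$ depending on $\omega$, $\vartheta$ and the $W^{1,\infty}(\D_H)$-norms of $\mu^{-1},\epsilon,\bJ$. Setting $\bU=\Psi_i\bE$ and invoking \cref{lem:curl_pull_cont} to replace $\norm{\Psi_i\bE}{\hcurl{\widetilde\D_i}}$ by $C\norm{\bE}{\hcurl{\D}}$, the split identity yields
\begin{align*}
\modulo{\Phi(\widetilde\bE_i-\Psi_i\bE,\bV)}\leq Cd_1(\D_H,\bT_i)\big(\norm{\bE}{\hcurl{\D}}+\norm{\bJ}{\bm{W}^{1,\infty}(\D_H)}\big)\norm{\bV}{\hcurl{\widetilde\D_i}}.
\end{align*}
Dividing by $\norm{\bV}{\hcurl{\widetilde\D_i}}$, taking the supremum, and using the inf--sup constant $C_2$ gives exactly the claimed bound, with $C$ independent of $i$ because every constant above is uniform in $i$ by \cref{ass:app_dom,ass:sesqform_approx}.
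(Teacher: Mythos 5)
Your proposal is correct and follows essentially the same route as the paper's own proof: a Strang-type perturbation argument that uses \cref{prop:mod_prob_equiv} to identify $\Psi_i\bE$ as the solution of the modified problem, rewrites $\widehat\Phi$ and $\widehat\bF$ over $\widetilde\D_i$ with exactly the transformed coefficients $\mu_{\bT_i}^{-1}$, $\epsilon_{\bT_i}$, $\bJ_{\bT_i}=\rd\bT_i^\co(\bJ\circ\bT_i)$ appearing in the paper, bounds the coefficient discrepancies by $Cd_1(\D_H,\bT_i)$ via \cref{lem:error_infty}, \eqref{eq:CboundMat} and \cref{ass:app_dom}, and closes with the uniform inf--sup condition of \cref{ass:sesqform_approx} and \cref{lem:curl_pull_cont}. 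Your explicit matrix-factor/composition splitting and your remark on the convexity hypothesis needed for \cref{lem:error_infty} only spell out bookkeeping the paper leaves implicit, so no substantive difference remains.
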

\begin{proof}
Fix $i\in\IN$, recall the sesquilinear and antilinear forms in
\cref{eq:pert_ses_ant} and let $\bU$, $\bV\in\hocurl{\widetilde\D_i}$.
We begin by noticing that the sesquilinear and antilinear
forms in \eqref{eq:pert_ses_ant} may be written as
\begin{align*}
\widehat\Phi_i(\bU,\bV)=\int_{\widetilde\D_i} \mu_{\bT_i}^{-1} \curl \mathbf{U} \cdot \curl \overline{\mathbf{V}}-
\omega^2\epsilon_{\bT_i} \mathbf{U} \cdot \overline{\mathbf{V}} \d\!\bx\quad\mbox{and}\quad\widehat\bF_i(\bV)=-\imath\omega\int_{\widetilde\D_i}\bJ_{\bT_i}\cdot\overline{\bV}\ddx,
\end{align*}
where
\begin{gather*}
\mu_{\bT_i}:=\det{\d\!\bT_i}\d\!\bT_i^{-1}(\mu\circ\bT_i)\d\!\bT_i^{-\top},\quad
\epsilon_{\bT_i}:=\det{\d\!\bT_i}\d\!\bT_i^{-1}(\epsilon\circ\bT_i)\d\!\bT_i^{-\top},\\
\bJ_{\bT_i}:=\det{\d\!\bT_i}\d\!\bT_i^{-1}(\bJ\circ\bT_i).
\end{gather*}
Then, one has that
\begin{align}
\begin{aligned}
&\modulo{\Phi(\bU,\bV)-\widehat\Phi_i(\bU,\bV)}\leq
\modulo{\int_{\widetilde\D_i} (\mu^{-1}-\mu_{\bT_i}^{-1}) \curl \mathbf{U} \cdot \curl \overline{\mathbf{V}} \d\!\bx}+\omega^2\modulo{\int_{\widetilde\D_i}
(\epsilon-\epsilon_{\bT_i}) \mathbf{U} \cdot \overline{\mathbf{V}} \d\!\bx}\\
&\leq C\left(\norm{\mu^{-1}-\mu_{\bT}^{-1}}{\Lp{\infty}{\D_H;\IC^{3\times 3}}}+\omega^2\norm{\epsilon-\epsilon_{\bT}}{\Lp{\infty}{\D_H;\IC^{3\times 3}}}\right)\norm{\bU}{\hcurl{\widetilde\D}}\norm{\bV}{\hcurl{\widetilde\D}}\\
&\leq C d_1(\D_H,\bT_i)(\norm{\epsilon}{\bm{W}^{1,\infty}(\D_H;\IC^{3\times 3})}+\norm{\mu^{-1}}{\bm{W}^{1,\infty}(\D;\IC^{3\times 3})})\norm{\bU}{\hcurl{\widetilde\D_i}}\norm{\bV}{\hcurl{\widetilde\D_i}},
\end{aligned}\label{eq:phihat_approx_phi}
\end{align}
where $C>0$ in the first inequality follows from the norm equivalence on finite-dimensional spaces and is independent of $i\in\IN$. The last bound is derived by applying \cref{lem:error_infty} and \cref{ass:app_dom}, whereby the positive constant $C$ depends on $\vartheta>1$ in \cref{ass:app_dom}. 

Analogously, we have that
\begin{align*}
\vert{\bF(\bV)-\widehat\bF_i(\bV)}\vert
\leq C d_1(\D_H,\bT_i)\norm{\bJ}{\bm{W}^{1,\infty}(\D_H)}\norm{\bV}{\hcurl{\widetilde\D_i}},
\end{align*}
where $C>0$ is independent of $i\in\IN$.
Finally, by \cref{ass:sesqform_approx} and \cref{prop:mod_prob_equiv}, it holds that
\begin{align*}
&\widetilde C_s\norm{\Psi_i\bE-\widetilde\bE_i}{\hcurl{\widetilde\D_i}}\leq 
\sup_{\bV\in \hocurl{\widetilde\D_i}\setminus\{\bnul\}} 
\frac{\vert{\Phi(\Psi_i\bE-\widetilde\bE_i,\bV)}\vert}{\norm{\bV}{\hcurl{\widetilde\D_i}}}\\
&\leq \sup_{\bV\in \hocurl{\D}\setminus\{\bnul\}} 
\frac{\vert{\Phi(\Psi_i\bE,\bV)-\widehat\Phi_i(\Psi_i\bE,\bV)}\vert+\vert{\widehat\bF_i(\bV)-\bF(\bV)}\vert}{\norm{\bV}{\hcurl{\widetilde\D_i}}}.\\
&\leq Cd_1(\D_H;\bT_i)\!\left[(\omega^2\norm{\epsilon}{\bm{W}^{1,\infty}(\D_H;\IC^{3\times 3})}+\norm{\mu^{-1}}{\bm{W}^{1,\infty}(\D;\IC^{3\times 3})})\norm{\Psi_i\bE}{\hcurl{\widetilde\D_i}}+\omega\norm{\bJ}{\bm{W}^{1,\infty}(\D_H)})\right],
\end{align*}
and the claimed estimate then follows by \cref{lem:curl_pull_cont}.
\end{proof}

\subsection{Convergence to an extended solution over \texorpdfstring{$\D_H$}{Lg}}\label{ssec:extended_conv}
Throughout this section, we aim at deriving convergence rate estimates for the approximation error
\begin{align*}
\norm{\bE-\widetilde\bE_i}{\hcurl{\widetilde{\D}_i}}
\end{align*}
To this end, we will require $\bE$ to be extended to $\D_H$ with some higher regularity $\hscurl{\D_H}{r}$, for some $r\in (0,1]$---see \cref{ass:first_approach} below.
Our results in \cref{ssec:pullback_conv_smooth} require no such assumption.

\begin{assumption}[Extension to $\D_H$]\label{ass:first_approach}
$\bE\in\hocurl{\D}$\textemdash the solution of
\cref{prob:varprob}\textemdash may be extended to $\hscurl{\D_H}{r}$, with $r\in (0,1]$.
We slightly abuse notation by referring to the extension of $\bE$ as $\bE$ as well. Furthermore,
we assume the hold-all domain $\D_H$ to be convex.
\end{assumption}

For an example of an extension operator from $\hscurl{\D}{1}$ to
$\hscurl{\D_H}{1}$ under the condition that $\D$ be a domain of class
$\C^2$ (\emph{cf.~}\cite[Thm.~2]{Hiptmair_2012}).

\begin{theorem}\label{thm:first_result_cont}
Let \cref{ass:sesqform,ass:extension,ass:sesqform_approx,ass:app_dom,ass:first_approach} hold
and let $\bE$ and $\widetilde\bE_i$ denote respectively the unique solutions of \cref{prob:varprob,prob:varprob_approx} on $\widetilde\D_i$ for all $i\in\IN$. Then, it holds that
\begin{align}\label{eq:class_approach_final_estimate}
\norm{\bE-\widetilde\bE_i}{\hcurl{\widetilde\D_i}}\leq
C\left[(d_1(\D_H,\bT_i)+d_0(\D_H,\bT_i)^{r})\norm{\bE}{\hscurl{\D_H}{r}}+d_1(\D_H,\bT_i)\norm{\bJ}{\bm{W}^{1,\infty}(\D_H)}\right],
\end{align}
for all $i\in\IN$, where the positive constant $C$ depends on $\omega$, $\mu$, $\epsilon$ and $\bJ$ but is independent of $i\in\IN$.
\end{theorem}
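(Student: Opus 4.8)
The plan is to reduce the statement to the pull-back estimate already established in \cref{thm:pull_result_cont} via the triangle inequality
\begin{align*}
\norm{\bE-\widetilde\bE_i}{\hcurl{\widetilde\D_i}}\leq\norm{\bE-\Psi_i\bE}{\hcurl{\widetilde\D_i}}+\norm{\Psi_i\bE-\widetilde\bE_i}{\hcurl{\widetilde\D_i}}.
\end{align*}
The second term is controlled by \cref{thm:pull_result_cont}, contributing a term of order $d_1(\D_H,\bT_i)(\norm{\bE}{\hcurl{\D}}+\norm{\bJ}{\bm{W}^{1,\infty}(\D_H)})$; since $\norm{\bE}{\hcurl{\D}}\leq\norm{\bE}{\hscurl{\D_H}{r}}$, this already produces the $d_1$-terms in \eqref{eq:class_approach_final_estimate}. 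It then remains to estimate the new term $\norm{\bE-\Psi_i\bE}{\hcurl{\widetilde\D_i}}$, and it is exactly here that \cref{ass:first_approach}---the extension of $\bE$ to $\hscurl{\D_H}{r}$ and the convexity of $\D_H$---is required.

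For this term I would use $\Psi_i\bE=\rd\bT_i^\top(\bE\circ\bT_i)$ and $\curl\Psi_i\bE=\rd\bT_i^\co(\curl\bE\circ\bT_i)$ from \cref{lem:trans_curl}, and split each contribution into a \emph{translation} part and a \emph{Jacobian} part:
\begin{align*}
\bE-\Psi_i\bE&=(\bE-\bE\circ\bT_i)+(\bI-\rd\bT_i^\top)(\bE\circ\bT_i),\\
\curl(\bE-\Psi_i\bE)&=(\curl\bE-\curl\bE\circ\bT_i)+(\bI-\rd\bT_i^\co)(\curl\bE\circ\bT_i).
\end{align*}
The translation parts are bounded through \cref{lem:trans_approx} applied on $\Upsilon=\D_H$, $\Omega=\widetilde\D_i$, with $s=r$ and $\bU=\bE$, respectively $\bU=\curl\bE$; the Lipschitz smallness $\kappa<1$ and the determinant bounds in \eqref{eq:t_cond_lem} follow from $d_1(\D_H,\bT_i)<1$ and \eqref{eq:vartheta_cond} of \cref{ass:app_dom}. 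Using $\norm{\bT_i-\Id}{\Lp{\infty}{\D_H}}\leq d_0(\D_H,\bT_i)$ together with $\norm{\bE}{r,\D_H}+\norm{\curl\bE}{r,\D_H}\leq C\norm{\bE}{\hscurl{\D_H}{r}}$, these yield a contribution of order $d_0(\D_H,\bT_i)^r\norm{\bE}{\hscurl{\D_H}{r}}$, which is the genuinely new term of the estimate.

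For the Jacobian parts I would bound $\norm{\bI-\rd\bT_i^\top}{\Lp{\infty}{\D_H}}$ and $\norm{\bI-\rd\bT_i^\co}{\Lp{\infty}{\D_H}}$ by $Cd_1(\D_H,\bT_i)$ via \eqref{eq:CboundMat}, while the factors $\norm{\bE\circ\bT_i}{0,\widetilde\D_i}$ and $\norm{\curl\bE\circ\bT_i}{0,\widetilde\D_i}$ are controlled by $C\norm{\bE}{\hscurl{\D_H}{r}}$ after the change of variables $\bT_i:\widetilde\D_i\to\D$, using $\norm{\det{\rd(\bT_i^{-1})}}{\lp{\infty}{\D_H}}\leq\vartheta$ from \eqref{eq:vartheta_cond}. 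Collecting both pieces gives $\norm{\bE-\Psi_i\bE}{\hcurl{\widetilde\D_i}}\leq C(d_1(\D_H,\bT_i)+d_0(\D_H,\bT_i)^r)\norm{\bE}{\hscurl{\D_H}{r}}$, and adding the \cref{thm:pull_result_cont} bound delivers \eqref{eq:class_approach_final_estimate}. The main obstacle is the translation estimate: it is the sole place demanding the extra fractional regularity $r$ (the Jacobian and pull-back parts need only $\hcurl{}$-regularity), and one must carefully verify that the hypotheses of \cref{lem:trans_approx}---notably the convexity of $\D_H$ and $\kappa<1$---are secured by \cref{ass:app_dom,ass:first_approach} before the application is legitimate.
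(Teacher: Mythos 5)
Your proposal is correct and follows essentially the same route as the paper's proof: the triangle inequality through $\Psi_i\bE$, the translation part of $\norm{\bE-\Psi_i\bE}{\hcurl{\widetilde\D_i}}$ handled by \cref{lem:trans_approx} with $\Upsilon=\D_H$, $\Omega=\widetilde\D_i$, $s=r$, the Jacobian part by \eqref{eq:CboundMat}, and \cref{thm:pull_result_cont} for the remaining pull-back error. The only cosmetic difference is that your splitting puts the Jacobian discrepancy on $\bE\circ\bT_i$ (requiring one extra change of variables via \eqref{eq:vartheta_cond}), whereas the paper's splitting puts it on $\bE$ directly, so the factor $\norm{\bE}{0,\D_H}$ appears without a change of variables; both versions are equally valid.
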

\begin{proof}
Fix $i\in\IN$ and recall $\Psi_i:\hocurl{\D}\to\hocurl{\widetilde\D_i}$ as in \cref{lem:trans_curl}. Then, by the triangle inequality, one has
\begin{align}
\norm{\bE-\widetilde\bE_i}{\hcurl{\widetilde\D_i}}\leq \norm{\bE-\Psi_i\bE}{\hcurl{\widetilde\D_i}}+\norm{\Psi_i\bE-\widetilde\bE_i}{\hcurl{\widetilde\D_i}},\label{eq:Thm_firstapp_1}
\end{align}
where $\bE$ is chosen to be extended by \cref{ass:first_approach}.
We start by bounding the first term in the right-hand side of \cref{eq:Thm_firstapp_1} thanks to \cref{lem:trans_approx}:
\begin{align*}
\norm{\bE-\Psi_i\bE}{0,{\widetilde\D_i}}&=\norm{\bE-\rd\bT_i^{\top}\bE\circ\bT_i}{0,{\widetilde{\D}_i}}\nonumber\\
&\leq \norm{\rd\bT_i^\top-\Id}{\Lp{\infty}{\D_H}}\norm{\bE}{0,{\D_H}}+(\vartheta^\half+1)\norm{\rd\bT_i^{\top}}{\Lp{\infty}{\D_H}}\norm{\bT_i-\Id}{\Lp{\infty}{\D_H}}^{r}\norm{\bE}{r,{\D_H}}\nonumber\\
&\leq \left(\norm{\rd\bT_i^\top-\Id}{\Lp{\infty}{\D_H}}+(\vartheta^\half+1)\vartheta\norm{\bT_i-\Id}{\Lp{\infty}{\D_H}}^{r}\right)\norm{\bE}{{r},{\D_H}}.
\end{align*}
Simarly, we have that
\begin{align*}
\norm{\curl(\bE\!-\!\Psi_i\bE)}{0,{\widetilde\D_i}}\!\leq\! \left(\norm{\rd\bT_i^\co\!-\!\Id}{\Lp{\infty}{\D_H}}\!+\!(\vartheta^\half\!+\!1)\vartheta\norm{\bT_i\!-\!\Id}{\Lp{\infty}{\D_H}}^{r}\right)\norm{\curl\bE}{{r},{\D_H}},
\end{align*}
so that, by \cref{ass:app_dom}, we have that
\begin{align*}
\norm{\bE-\Psi_i(\bE)}{\hcurl{\widetilde\D_i}}^2&=\norm{\bE-\Psi_i(\bE)}{0,{\widetilde\D_i}}^2+\norm{\curl\bE-\curl\Psi_i(\bE)}{0,{\widetilde\D_i}}^2\\
&\leq  C^2\left(d_1(\D_H,\bT_i)+(\vartheta^\half+1)\vartheta d_0(\D_H,\bT_i)^{r}\right)^2\left(\norm{\bE}{{r},{\D_H}}^2+\norm{\curl\bE}{{r},{\D_H}}^2\right)\\
&\leq  C^2(\vartheta^\half+1)^2\vartheta^2\left(d_1(\D_H,\bT_i)+d_0(\D_H,\bT_i)^{r}\right)^2\norm{\bE}{\hscurl{\D_H}{r}}^2,
\end{align*}
where $C$ arises from \cref{eq:CboundMat} and is independent of $i\in\IN$. Furthermore, the second term in the right-hand side of \cref{eq:Thm_firstapp_1} may be bounded by direct application of \cref{thm:pull_result_cont}, yielding
\begin{align*}
\norm{\bE-\widetilde\bE_i}{\hcurl{\widetilde\D_i}}\leq &\ C\left(d_1(\D_H,\bT_i)+d_0(\D_H,\bT_i)^{r}\right)\\
&\cdot \norm{\bE}{\hscurl{\D_H}{r}}+Cd_1(\D_H,\bT_i)(\norm{\bE}{\hcurl{\D}}+\norm{\bJ}{\bm{W}^{1,\infty}(\D_H)}).
\end{align*}
The stated result follows by reordering the terms in the last equation.
\end{proof}

\section{Variational problems on approximate domains: discrete problem}
\label{sec:DiscProb}
We now analyze the discrete version of \cref{prob:varprob_approx},
and consider the family of approximate domains $\kD$ corresponding to curved
domains $\{\D_{h_i}\}_{i\in\IN}$ introduced in \cref{sec:FE}, i.e., $\D_{h_i}\equiv\widetilde\D_i$. This discrete setting is signaled by denoting an
arbitrary element of $\kD$ by $\D_h$ instead of $\widetilde\D$. We also recall the discrete
space $\bm{P}^c_0(\tau_h)$ in \cref{eq:disc_space} as the space
of curl-conforming piecewise polynomials of degree $k\in\IN$ 
with null flipped-Dirichlet trace on $\Gamma$.

\begin{assumption}[Discrete inf-sup conditions]
\label{ass:sesqform_approx_disc}
Assume the sesquilinear form in 
\cref{eq:Phi_approx} to satisfy the following conditions:
\begin{align}
&\inf_{\bU\in \bm{P}^c_{0}(\tau_h)\setminus\{\bnul\}}\left(\sup_{\bV\in \bm{P}^c_{0}(\tau_h)\setminus\{\bnul\}} 
\frac{\modulo{\Phi(\bU,\bV)}}{\norm{\bU}{\hcurl{{\D_h}}}\norm{\bV}{\hcurl{{\D_h}}}}\right)\geq C_2,\nonumber\\
&\inf_{\bV\in \bm{P}^c_{0}(\tau_h)\setminus\{\bnul\}}\left(\sup_{\bU\in \bm{P}^c_{0}(\tau_h)\setminus\{\bnul\}} 
\frac{\modulo{\Phi(\bU,\bV)}}{\norm{\bU}{\hcurl{{\D_h}}}\norm{\bV}{\hcurl{{\D_h}}}}\right)\geq C_3,\label{eq:inf_sup_alternative}
\end{align}
for all $\tau_h\in\kT$ ($\D_h\in\kD$), where the positive constants
${C}_{2}$ and ${C}_{3}$ are independent of the mesh-size $h$.
\end{assumption}

\begin{problem}[Discrete variational problem on inexact domains]
\label{prob:varprob_approx_disc}%
Find $\bE_h\in \bm{P}^c_0(\tau_h)$ such that
\begin{align*}
\Phi(\bE_h,\bV)=\bF(\bV),
\end{align*}
for all $\bV\in \bm{P}^c_0(\tau_h)$.
\end{problem}

\cref{ass:sesqform_approx_disc} ensures uniqueness and existence
of solutions of \cref{prob:varprob_approx_disc}, whose solutions are denoted $\bE_h\in\bm{P}^c_0(\tau_h)$ for general $\tau_h\in\kT$ and $\bE_{h_i}\in\bm{P}^c_0(\tau_{h_i})$ for each $i\in\IN$, respectively. Note that the condition in \cref{eq:inf_sup_alternative} is stronger than required for the purposes of proving the unique solvability of \cref{prob:varprob_approx_disc}. This stronger condition is necessary to prove a discrete analogue of \cref{prop:mod_prob_equiv} (see \cref{prop:mod_disc_prob}) via a perturbation argument.

\subsection{Convergence of domains in a discrete setting}
Let us start with the following result, regarding the approximation of
functions in $\hocurl{\D}$ by discrete functions in $\bm{P}^c_0(\tau_{h})$.
Here, once again we identify elements of $\hocurl{\D}$ and $\hocurl{\D_h}$
with their zero-extensions to $\D_H$.

\begin{lemma}
\label{lem:HausImpDiscMosc}
Suppose $\kD$ approximates $\D$ in the sense of Hausdorff and that
$\D$ and all $\D_h\in\kD$ are Lipschitz continuous domains
with uniform Lipschitz constant in $\kD$. Then, it holds that
\begin{enumerate}[label=(\alph*)]
\item For every $\bU\in\hocurl{\D}$ there exists a sequence $\{\bU_{i}\}_{i\in\IN}$ with
$\bU_i\in\bm{P}^c_0(\tau_{h_i})$ for all $i\in\IN$, such that $\bU_i$ converges to $\bU$
strongly in $\hocurl{\D_H}$.\label{it:1def:MoscoDisc}
\item Weak limits of every sequence $\{\bU_i\}_{i\in\IN}$, with $\bU_i\in\bm{P}^c_0(\tau_{h_i})$, belong to $\hocurl{\D}$.\label{it:2def:MoscoDisc}
\end{enumerate}
\end{lemma}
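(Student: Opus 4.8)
The plan is to dispatch the two assertions separately, the first being immediate and the second carrying all the work. For \cref{it:2def:MoscoDisc}, I would simply note that it is inherited from the continuous Mosco convergence already secured by \cref{lem:HausImpMosc}. Since $\bm{P}^c_0(\tau_{h_i})\subset\hocurl{\D_{h_i}}$ for every $i\in\IN$, any sequence $\{\bU_i\}_{i\in\IN}$ with $\bU_i\in\bm{P}^c_0(\tau_{h_i})$ is a particular instance of the sequences considered in \cref{it:2def:Mosco} of \cref{def:Mosco}. As the hypotheses here coincide with those of \cref{lem:HausImpMosc}, the family $\kD$ converges to $\D$ in the sense of Mosco, and its defining property \cref{it:2def:Mosco} forces every weak limit in $\hocurl{\D_H}$ of such a sequence to belong to $\hocurl{\D}$; this is exactly \cref{it:2def:MoscoDisc}.

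The substance lies in \cref{it:1def:MoscoDisc}, for which the continuous recovery sequence delivered by Mosco convergence cannot be used directly, as its members lie in $\hocurl{\D_{h_i}}$ but are in general neither piecewise polynomial nor regular enough to be interpolated by $\mathbf{\Pi}_{h_i}$. I would instead build the discrete recovery sequence out of smooth fields. Fixing $\bU\in\hocurl{\D}$ and identifying all fields with their zero-extensions to $\D_H$, I would first use the density of $\bm{\C}_0^\infty(\D)$ in $\hocurl{\D}$ for Lipschitz domains (see, e.g., \cite{Monk:2003aa}) to select, for each tolerance $\delta>0$, a field $\bU_\delta\in\bm{\C}_0^\infty(\D)$ with $\norm{\bU-\bU_\delta}{\hcurl{\D_H}}<\delta$. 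Writing $S_\delta:=\supp\bU_\delta\subset\D$ for its compact support, the first branch of the Hausdorff limit in \cref{def:Haus} guarantees that interior points lying at a positive distance from $\overline\D_H\setminus\D$ are eventually contained in $\D_{h_i}$; since $\dist(S_\delta,\overline\D_H\setminus\D)>0$ and $h_i\to0$, there is an index past which $S_\delta\subset\D_{h_i}$ at a distance exceeding the diameter of the boundary elements of $\tau_{h_i}$.

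With $S_\delta$ safely interior to $\D_{h_i}$ and detached from $\Gamma_{h_i}$, I would apply the global interpolation operator of \cref{defi:glob_interp} to $\bU_\delta$. Because $\bU_\delta$ vanishes on the layer of elements meeting $\Gamma_{h_i}$, every degree of freedom attached to a boundary edge or face vanishes, whence $\mathbf{\Pi}_{h_i}(\bU_\delta)\in\bm{P}^c_0(\tau_{h_i})$ and its zero-extension lies in $\hocurl{\D_H}$. Moreover, since $S_\delta\subset\D_{h_i}$, the zero-extension of $\bU_\delta$ coincides with $\bU_\delta$ itself, so the $\hcurl{\D_H}$-norm of the interpolation error equals its $\hcurl{\D_{h_i}}$-counterpart, which the elementwise estimate of \cref{prop:locinterp}, summed over $\tau_{h_i}$, bounds by $Ch_i^s\norm{\bU_\delta}{\hscurl{\D_{h_i}}{s}}$.

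To conclude, I would run a diagonal extraction: choosing $\delta_k\downarrow0$ and, for each $k$, an increasing index $I_k$ past which both the support containment and the bound $\norm{\bU_{\delta_k}-\mathbf{\Pi}_{h_i}(\bU_{\delta_k})}{\hcurl{\D_H}}<\delta_k$ hold, I set $\bU_i:=\mathbf{\Pi}_{h_i}(\bU_{\delta_k})$ for $I_k\leq i<I_{k+1}$, so that $\norm{\bU-\bU_i}{\hcurl{\D_H}}<2\delta_k\to0$. I expect the main obstacle to be precisely this interchange of the two limiting parameters: the smoothing tolerance $\delta$ and the mesh-size $h_i$ cannot be driven to zero at once, because sharpening the regularization inflates $\norm{\bU_\delta}{\hscurl{\D_{h_i}}{s}}$ and with it the interpolation constant; the diagonal argument, together with the careful tracking of supports and zero-extensions that guarantees the boundary traces genuinely vanish, is what reconciles the two scales.
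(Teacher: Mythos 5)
Your proposal is correct and follows essentially the same route as the paper's proof: part (b) by the inclusion $\bm{P}^c_0(\tau_{h_i})\subset\hocurl{\D_{h_i}}$ together with \cref{lem:HausImpMosc}, and part (a) by mollifying $\bU$ via density of $\bm{\C}^\infty_0(\D)$, using Hausdorff convergence to place the compact support inside $\D_{h_i}$ for large $i$, interpolating with $\mathbf{\Pi}_{h_i}$, and passing to the limit through a decreasing sequence of tolerances. Your write-up is in fact more explicit than the paper's (which compresses the diagonal extraction into ``repeating the procedure for a decreasing sequence of $\epsilon$'' and leaves the vanishing of boundary degrees of freedom implicit), but no step differs in substance.
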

\begin{proof}
\cref{it:1def:MoscoDisc}: Take $\bU\in\hocurl{\D}$
and set an arbitrary $\epsilon>0$. By density of $\bm\C^\infty_0(\D)$ in
$\hocurl{\D}$, there exists some $\widetilde\bU_\epsilon\in\bm\C^\infty_0(\D)$
such that $\norm{\widetilde\bU_\epsilon-\bU}{\hocurl{\D_H}}\leq \frac{\epsilon}{2}$. Moreover, there exists some $i_\epsilon\in\IN$ such that $\supp(\widetilde\bU_\epsilon)\subset\D_{h_i}$ for all $i>i_\epsilon$ (\emph{cf.~}proof of Lemma 3 in \cite[Sec.~3]{Pironneau:1984}), and another
$i'_{\epsilon}\in\IN$ such that $\norm{\mathbf{\Pi}_{h_i}\widetilde\bU_\epsilon-\widetilde\bU_\epsilon}{\hcurl{\D_H}}\leq\frac{\epsilon}{2}$ for all $i>i'_{\epsilon}$. Then, it holds that 
\begin{align*}
\norm{\mathbf{\Pi}_{h_i}\widetilde\bU_\epsilon-\bU}{\hcurl{\D_H}}\leq\epsilon,
\end{align*}
for all $i>i'_{\epsilon}$. Repeating the procedure above for a decreasing
sequence of $\epsilon>0$ allows the construction of a strongly convergent sequence to $\bU\in\hocurl{\D}$ (\emph{cf.}~\cite[Lem.~2.4]{chandler2021boundary}).

\cref{it:2def:MoscoDisc}: Follows from \cref{lem:HausImpMosc} by
the inclusion $\bm{P}^c_0(\tau_{h_i})\subset\hocurl{\D_{h_i}}$.
\end{proof}

In order to obtain convergence rates of finite element
solutions to $\bE$---thereby proving discrete analogues to \cref{thm:pull_result_cont,thm:first_result_cont}---we need one further assumption on the transformations $\{\bT_i\}_{i\in\IN}$.

\begin{assumption}
\label{ass:app_dom_T}
For each element of the family $\{\bT_i\}_{i\in\IN}$ mapping $\D_{h_i}$ to $\D$, as given in
\cref{ass:app_dom}, we assume that the transformations and their inverses belong to
$\bm{W}^{\K+1,\infty}(K)$ for each tetrahedron $K$ in their
corresponding mesh, i.e., for each $i\in\IN$, we assume that
\begin{align*}
\bT_i\vert_{K}\in\bm{W}^{\K+1,\infty}(K)
\end{align*}
for all $K\in\tau_{h_i}$, and that
\begin{gather*}
\lim\limits_{i\rightarrow\infty}\max\limits_{K\in\tau_{h_i}}\norm{\bT_i\vert_{K}-\Id}{\bm{W}^{\K,\infty}(K)}+\max\limits_{K\in\tau_{h_i}}\norm{\bT_i^{-1}\vert_{K}-\Id}{\bm{W}^{\K,\infty}(K)}=0,\\
\max\limits_{K\in\tau_{h_i}}\norm{\bT_i\vert_{K}}{\bm{W}^{\K+1,\infty}(K)}+\max\limits_{K\in\tau_{h_i}}\norm{\bT_i^{-1}\vert_{K}}{\bm{W}^{\K+1,\infty}(K)}\leq c_\kT,
\end{gather*}
for all $i\in\IN$, where $c_\kT$ is a positive constant
independent of $i\in\IN$. Moreover, the family $\{\bT_i\}_{i\in\IN}$
satisfies the following bound:
\begin{align}\label{eq:Tboundh}
d_0(\D_H,\bT_i)\leq C h_i^{\K+1}\quad\mbox{and}\quad d_1(\D_H,\bT_i)\leq C h_i^{\K},
\end{align}
for $\K\in\set{1}{\mathfrak{M}-1}$ as in \cref{ass:mesh}, with $C>0$ independent of $i\in\IN$.
\end{assumption}

\cref{ass:app_dom_T} is justified by constructions of
mappings $\{\bT_i\}_{i\in\IN}$ in the context of curved meshes in finite
elements (\emph{cf.}~\cite[Prop.~2 and 3]{lenoir1986} for
properties of these mappings and Sections 3 and 5 therein for their construction
satisfying \cref{ass:mesh,ass:app_dom_T}). Also, \cref{ass:dom} related to smoothness requirements on $\D$ to be of class $\C^\mathfrak{M}$,
has no direct relevance on the coming proofs of our discrete approximation results. Indeed,
the smoothness of the domain only plays a role in proving decay rates such as \cref{eq:Tboundh}
in practical constructions of the transformations $\{\bT_i\}_{i\in\IN}$, which we have assumed through \cref{ass:app_dom_T}.
Nonetheless, we opt to enforce \cref{ass:dom} in our coming results to emphasize its necessity for the construction of the transformations $\{\bT_i\}_{i\in\IN}$ and, therefore, our main results.

\subsection{Discrete convergence of solution pull-backs in approximate domains}
We now extend the results in \cref{ssec:pullback_conv_smooth} to our discrete setting, i.e.~we estimate the error
\begin{align*}
\norm{\Psi_i\bE-\bE_{h_i}}{\hcurl{\D_{h_i}}},
\end{align*}
as $i$ grows towards infinity, where $\Psi_i:\hocurl{\D}\to\hocurl{\D_{h_i}}$ was introduced in \cref{lem:trans_curl}.

\begin{problem}[Modified discrete variational problem on $\D_h$]
\label{prob:varprob_modified_disc}
Find $\widehat\bE_{h}\in {\bm P}^c_0(\tau_{h})$ such that
\begin{align*}
\widehat\Phi(\widehat\bE_h,\bV)=\widehat\bF(\bV)\quad\forall\,\bV\in {\bm P}^c_0(\tau_{h}).
\end{align*}
\end{problem}

\begin{proposition}\label{prop:mod_disc_prob}
Let \cref{ass:sesqform,ass:mesh,ass:app_dom,ass:sesqform_approx_disc} hold.
Moreover, assume that
$\mu^{-1}$, $\epsilon$ and $\bJ$ have coefficients in $W^{1,\infty}(\D_H)$.
Then, \cref{prob:varprob_modified_disc} is well posed for all sufficiently small $h>0$.
\end{proposition}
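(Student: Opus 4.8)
The plan is to prove well-posedness of \cref{prob:varprob_modified_disc} by showing that the modified form $\widehat\Phi$ of \cref{eq:pert_ses_ant} inherits, for all sufficiently small $h$, the two discrete inf-sup conditions that $\Phi$ enjoys on $\bm{P}^c_0(\tau_h)$ by \cref{ass:sesqform_approx_disc}. The essential point — and the reason the continuous argument of \cref{prop:mod_prob_equiv} cannot be transcribed verbatim — is that $\Psi^{-1}$ maps $\bm{P}^c_0(\tau_h)$ onto a subspace of $\hocurl{\D}$ that is \emph{not} a finite-element space, so the discrete inf-sup of $\Phi$ over $\D$ is unavailable on the transported functions. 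One must therefore compare $\widehat\Phi$ against $\Phi$ directly on the common discrete space $\bm{P}^c_0(\tau_h)\subset\hocurl{\D_h}$ and argue by perturbation.

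First I would rewrite $\widehat\Phi$ through the pulled-back coefficients $\mu_{\bT},\epsilon_{\bT},\bJ_{\bT}$ exactly as in the proof of \cref{thm:pull_result_cont}, and then invoke the perturbation estimate \cref{eq:phihat_approx_phi}, which was derived there for every pair $\bU,\bV\in\hocurl{\D_h}$ and thus holds in particular for all discrete $\bU,\bV\in\bm{P}^c_0(\tau_h)$:
\[
\modulo{\Phi(\bU,\bV)-\widehat\Phi(\bU,\bV)}\leq C\,d_1(\D_H,\bT)\,\norm{\bU}{\hcurl{\D_h}}\norm{\bV}{\hcurl{\D_h}},
\]
with $C$ depending on $\omega$ and on the $W^{1,\infty}(\D_H)$-norms of $\mu^{-1},\epsilon$ through \cref{lem:error_infty}, and on $\vartheta$ through \cref{ass:app_dom}, but crucially \emph{not} on $h$. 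By \cref{eq:Tboundh} in \cref{ass:app_dom_T} one has $d_1(\D_H,\bT)\leq C h^{\K}$, so that the perturbation size $\delta(h):=C\,d_1(\D_H,\bT)$ tends to zero as $h\to0$.

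Next I would feed this bound into both inf-sup conditions of \cref{ass:sesqform_approx_disc}. For every $\bU\in\bm{P}^c_0(\tau_h)\setminus\{\bnul\}$, a triangle-inequality argument gives
\[
\sup_{\bV\neq\bnul}\frac{\modulo{\widehat\Phi(\bU,\bV)}}{\norm{\bV}{\hcurl{\D_h}}}\geq \sup_{\bV\neq\bnul}\frac{\modulo{\Phi(\bU,\bV)}}{\norm{\bV}{\hcurl{\D_h}}}-\delta(h)\norm{\bU}{\hcurl{\D_h}}\geq (C_2-\delta(h))\norm{\bU}{\hcurl{\D_h}},
\]
and symmetrically $\sup_{\bU\neq\bnul}\modulo{\widehat\Phi(\bU,\bV)}/\norm{\bU}{\hcurl{\D_h}}\geq (C_3-\delta(h))\norm{\bV}{\hcurl{\D_h}}$ for every $\bV\neq\bnul$. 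Choosing $h$ small enough that $\delta(h)<\tfrac12\min\{C_2,C_3\}$ renders both constants positive and bounded away from zero uniformly in $h$. This is precisely where the \emph{stronger} hypothesis \cref{eq:inf_sup_alternative} enters: a square discrete system needs only a single inf-sup for solvability, but the perturbation argument must preserve \emph{both} in order to verify the abstract inf-sup (Banach--Ne\v{c}as--Babu\v{s}ka) conditions of \cite[Sec.~2.1.6]{SauterSchwabBEM} directly, without recourse to the finite-dimensional injectivity-implies-surjectivity shortcut. Continuity of $\widehat\Phi$ and boundedness of $\widehat\bF$ on $\bm{P}^c_0(\tau_h)$ follow from \cref{lem:curl_pull_cont} together with \cref{ass:sesqform}, exactly as in \cref{prop:mod_prob_equiv}, and the cited theorem then yields a unique $\widehat\bE_h$.

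The main obstacle I anticipate is the bookkeeping ensuring $h$-independence of the constant $C$ in the perturbation estimate: one must verify that every constant entering \cref{eq:phihat_approx_phi} — from norm equivalence on finite-dimensional spaces, from \cref{lem:error_infty}, and from the $\vartheta$-bounds of \cref{ass:app_dom} — is genuinely uniform over $\tau_h\in\kT$, so that a single threshold $h_0>0$ works simultaneously for all meshes. Once this uniformity is secured, the remainder is a routine finite-dimensional inf-sup perturbation.
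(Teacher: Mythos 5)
Your proposal is correct and follows essentially the same route as the paper's own (very terse) proof: continuity of $\widehat\Phi$ and $\widehat\bF$ on $\bm{P}^c_0(\tau_h)\subset\hocurl{\D_h}$ is inherited from \cref{prop:mod_prob_equiv} by conformity, and the discrete inf-sup conditions follow by perturbing those of \cref{ass:sesqform_approx_disc} through \cref{eq:phihat_approx_phi}, exactly as in the proof of Theorem~4.2.11 of \cite{SauterSchwabBEM}. One small repair: \cref{eq:Tboundh} belongs to \cref{ass:app_dom_T}, which is not among the hypotheses of \cref{prop:mod_disc_prob}, so you should instead deduce $d_1(\D_H,\bT_i)\to 0$ directly from \cref{ass:app_dom} (convergence in the sense of transformations of order one), which is all the perturbation argument needs---the quantitative rate $h^{\K}$ is irrelevant here.
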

\begin{proof}
The continuity of $\widehat\Phi(\cdot,\cdot)$ and $\widehat\bF(\cdot)$ on $\bm{P}^c_0(\tau_h)$ follows from the
proof of \cref{prop:mod_prob_equiv} and the conformity of the finite element space, i.e.~$\bm{P}^c_0(\tau_h)\subset\hocurl{\D_h}$. The inf-sup conditions on $\widehat\Phi(\cdot,\cdot)$ follow by a perturbation argument through
\cref{eq:phihat_approx_phi} and our assumptions\footnote{See, for example, the proof of \cite[Thm.~{4.2.11}]{SauterSchwabBEM}}.
\end{proof}

In order to estimate the convergence rates for approximations of $\bE_{h_i}$ to $\Psi_i\bE$ as $i\in\IN$ grows
to infinity, one needs to show that $\Psi_i\bE$ preserves the smoothness of $\bE$ to some degree.

\begin{lemma}\label{lem:smooth_pull}
Let \cref{ass:mesh,ass:app_dom_T} hold.
Let $\bU\in\hocurl{\D}\cap\hscurl{\D}{s}$ for some $s\in\{1:\K\}$ and let $K\in\tau_{h}$ be an arbitrary tetrahedron
of the mesh $\tau_{h}\in\kT$. Then, for $\Psi:\hocurl{\D}\to\hocurl{\D_h}$ as introduced in \cref{lem:trans_curl},
there holds that $\Psi\bU\in\hscurl{K}{s}$ and that 
\begin{align*}
\norm{\Psi_i(\bU)}{\hscurl{K}{s}}\leq \vartheta^\half C\norm{\bU}{\hscurl{\bT_{i}(K)}{s}},
\end{align*}
where the positive constant $C$ depends on $c_\kT$ in
Assumption \ref{ass:app_dom_T} but not on the mesh-size or $\bU\in\hocurl{\D}\cap\hscurl{\D}{s}$.
\end{lemma}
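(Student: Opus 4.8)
The plan is to exploit the explicit form of the curl-conforming pull-back from \cref{lem:trans_curl}, namely $\Psi_i\bU=\rd\bT_i^\top(\bU\circ\bT_i)$ together with $\curl\Psi_i\bU=\rd\bT_i^\co(\curl\bU\circ\bT_i)$, and estimate the two contributions to the $\hscurl{K}{s}$-norm separately. Since $s\in\{1:\K\}$ is an integer, I would work with integer-order Sobolev spaces, so that the product and chain rules apply directly without fractional-order complications. On the element $K$, \cref{ass:app_dom_T} guarantees $\bT_i\vert_K,\bT_i^{-1}\vert_K\in\bm{W}^{\K+1,\infty}(K)$ with $\bm{W}^{\K+1,\infty}(K)$-norms bounded by $c_\kT$ uniformly in the mesh; consequently $\rd\bT_i$ and each entry of its cofactor matrix $\rd\bT_i^\co$ (a polynomial in the entries of $\rd\bT_i$) lie in $\bm{W}^{\K,\infty}(K)$ with norms bounded by a constant depending only on $c_\kT$, since $W^{\K,\infty}(K)$ is a Banach algebra under pointwise multiplication.

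First I would bound $\norm{\rd\bT_i^\top(\bU\circ\bT_i)}{s,K}$. By the Leibniz rule, the $H^s(K)$-norm of a product of the $\bm{W}^{s,\infty}(K)$-bounded matrix $\rd\bT_i^\top$ with the vector field $\bU\circ\bT_i$ is controlled, up to a constant depending on $c_\kT$, by $\norm{\bU\circ\bT_i}{s,K}$; the same reasoning applied with $\rd\bT_i^\co$ reduces the curl contribution to $\norm{\curl\bU\circ\bT_i}{s,K}$. It therefore remains to estimate the $H^s(K)$-norm of a composition $\bV\circ\bT_i$ with $\bV\in\Hsob{s}{\bT_i(K)}$, taking $\bV=\bU$ and $\bV=\curl\bU$ respectively (both of which indeed lie in $\Hsob{s}{\bT_i(K)}$ since $\bU\in\hscurl{\D}{s}$ and $\bT_i(K)\subset\D$).

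For the composition estimate I would invoke the Fa\`a di Bruno (higher chain) rule: for a multi-index $\bm\alpha$ with $\modulo{\bm\alpha}\leq s$, the derivative $\partial^{\bm\alpha}(\bV\circ\bT_i)$ is a finite sum of terms of the form $\big((\partial^{\bm\beta}\bV)\circ\bT_i\big)$ multiplied by products of derivatives of $\bT_i$ of total order $\modulo{\bm\alpha}\leq s\leq\K$. All such derivatives of $\bT_i$ are bounded in $L^\infty(K)$ by $c_\kT$ via \cref{ass:app_dom_T}, so after taking $L^2(K)$-norms the estimate reduces to bounding $\norm{(\partial^{\bm\beta}\bV)\circ\bT_i}{0,K}$ for $\modulo{\bm\beta}\leq s$. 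A change of variables $\by=\bT_i(\bx)$ then gives $\norm{(\partial^{\bm\beta}\bV)\circ\bT_i}{0,K}^2=\int_{\bT_i(K)}\modulo{\partial^{\bm\beta}\bV}^2\,\modulo{\det{\rd(\bT_i^{-1})}}\,\d\!\by\leq\vartheta\,\norm{\partial^{\bm\beta}\bV}{0,\bT_i(K)}^2$, where the Jacobian bound $\norm{\det{\rd(\bT_i^{-1})}}{\lp{\infty}{\D_H}}\leq\vartheta$ of \cref{eq:vartheta_cond} in \cref{ass:app_dom} furnishes a single factor $\vartheta^{\half}$ at the level of norms. This simultaneously establishes $\bV\circ\bT_i\in\Hsob{s}{K}$, hence $\Psi_i\bU\in\hscurl{K}{s}$.

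Collecting the bounds for the field part and the curl part and summing over $\modulo{\bm\alpha}\leq s$ yields $\norm{\Psi_i\bU}{\hscurl{K}{s}}\leq\vartheta^{\half}C\big(\norm{\bU}{s,\bT_i(K)}^2+\norm{\curl\bU}{s,\bT_i(K)}^2\big)^{\half}=\vartheta^{\half}C\norm{\bU}{\hscurl{\bT_i(K)}{s}}$, with $C$ depending only on $c_\kT$. The main obstacle is purely bookkeeping: organizing the chain- and product-rule expansions so that every factor involving $\bT_i$ is recognized as an $L^\infty(K)$-bounded quantity (in terms of $c_\kT$) and so that exactly one power of $\vartheta^{\half}$—and no extra mesh-dependent constant—survives the change of variables. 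Uniformity in the mesh-size is then immediate, precisely because the $\bm{W}^{\K+1,\infty}(K)$-bounds of \cref{ass:app_dom_T} and the determinant bounds of \cref{ass:app_dom} are uniform in $i\in\IN$.
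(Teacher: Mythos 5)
Your argument is correct, and it reaches the same estimate the paper asserts; the difference is one of packaging rather than substance. The paper's proof is a one-line citation: it invokes the classical composition lemma of Ciarlet--Raviart (Lemma~1 in \cite{ciarlet1972combined}, or Lemma~3 in \cite{ciarlet1972interpolation}), which bounds Sobolev semi-norms of $\bV\circ\bT_i$ in terms of $\norm{\bV}{s,\bT_i(K)}$, the $L^\infty$-bounds on the derivatives of $\bT_i$ up to order $s$, and the Jacobian determinant, and then observes that \cref{ass:app_dom_T} and \cref{eq:vartheta_cond} make all the constants in that lemma uniform in $i$ and in the mesh-size. What you have done is re-derive the content of that cited lemma inline: the Leibniz-rule reduction for the matrix factors $\rd\bT_i^\top$ and $\rd\bT_i^\co$ (with the Banach-algebra observation handling the cofactor entries), the Fa\`a di Bruno expansion for the composition, and the change of variables producing the single factor $\vartheta^{\half}$ from the determinant bound in \cref{ass:app_dom}. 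Your factor bookkeeping matches the paper's statement exactly---the matrix and chain-rule factors are absorbed into a constant depending only on $c_\kT$, and precisely one $\vartheta^{\half}$ survives---and your restriction to integer $s$ is consistent with the lemma's hypothesis $s\in\{1:\K\}$ (the paper itself notes in the conclusions that the restriction to integer regularity is a known limitation of this chain of estimates). One minor point of rigor you gloss over: Fa\`a di Bruno applies classically to smooth $\bV$, so for $\bV\in\Hsob{s}{\bT_i(K)}$ you should close the argument by density of smooth functions, exactly as the paper does in the proofs of \cref{lem:error_infty,lem:trans_approx}; this is routine. What your route buys is self-containedness and transparency about where each hypothesis enters; what the citation buys is brevity, plus the fact that the Ciarlet--Raviart lemmas also track explicit mesh-size powers (as used in \cref{lem:curvineqK}), which your argument can ignore here only because \cref{ass:app_dom_T} bounds all derivatives of $\bT_i$ uniformly by $c_\kT$ rather than by powers of $h$.
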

\begin{proof}
The result is a direct consequence of \cite[Lem.~1]{ciarlet1972combined}
or \cite[Lem.~3]{ciarlet1972interpolation} together with our assumptions.
\end{proof}

\begin{theorem}\label{thm:pull_result_disc}
Let \cref{ass:dom,ass:sesqform,ass:mesh,ass:extension,ass:sesqform_approx,ass:app_dom,ass:sesqform_approx_disc,ass:app_dom_T} hold,
let $\bE$ and $\bE_{h_i}$ denote the unique solutions of \cref{prob:varprob,prob:varprob_approx_disc}, respectively, and assume that $\bE\in\hscurl{\D}{s}$ for some $s\in\{1:k\}$.
Furthermore, let $\mu^{-1}$, $\epsilon$ and $\bJ$ have coefficients in $W^{1,\infty}(\D)$.
Then, there exists some $\mathfrak{i}\in\IN$ such that, for all $i>\mathfrak{i}$, it holds that
\begin{align}\label{eq:pull_approach_disc_estimate}
\norm{\Psi_i\bE-\bE_{h_i}}{\hcurl{\D}}\leq C (h_i^s\norm{\bE}{\hscurl{\D}{s}}+h_i^\K\norm{\bJ}{\bm{W}^{1,\infty}(\D_H)}),
\end{align}
where $C>0$ depends on $\omega$, $\mu$, $\epsilon$ and $\bJ$ but is independent of $i\in\IN$.
\end{theorem}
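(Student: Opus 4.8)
The plan is to use a discrete Strang-type (second) lemma applied to the modified discrete problem (\cref{prob:varprob_modified_disc}), in complete parallel with the continuous argument of \cref{thm:pull_result_cont}, but now accounting for the finite-element discretization error. By \cref{prop:mod_prob_equiv}, the pull-back $\Psi_i\bE\in\hocurl{\D_{h_i}}$ solves the modified continuous problem $\widehat\Phi_i(\Psi_i\bE,\bV)=\widehat\bF_i(\bV)$ for all $\bV\in\hocurl{\D_{h_i}}$, while $\bE_{h_i}\in\bm{P}^c_0(\tau_{h_i})$ solves the \emph{unmodified} discrete problem $\Phi(\bE_{h_i},\bV)=\bF(\bV)$ for all $\bV\in\bm{P}^c_0(\tau_{h_i})$. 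The discrepancy between $\Phi$ and $\widehat\Phi_i$ (and between $\bF$ and $\widehat\bF_i$) is already controlled by the consistency estimate \eqref{eq:phihat_approx_phi} and its antilinear analogue, both of which scale like $d_1(\D_H,\bT_i)\leq Ch_i^\K$ by \eqref{eq:Tboundh} in \cref{ass:app_dom_T}.

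First I would invoke \cref{prop:mod_disc_prob} to guarantee, for all sufficiently small $h$ (i.e.\ all $i>\mathfrak{i}$ for some $\mathfrak{i}\in\IN$), that the modified discrete problem is well posed with a uniform discrete inf-sup constant $\widetilde{C}_s>0$. Let $\widehat\bE_{h_i}\in\bm{P}^c_0(\tau_{h_i})$ denote its solution. I would then split the error by the triangle inequality as
\begin{align*}
\norm{\Psi_i\bE-\bE_{h_i}}{\hcurl{\D_{h_i}}}\leq \norm{\Psi_i\bE-\widehat\bE_{h_i}}{\hcurl{\D_{h_i}}}+\norm{\widehat\bE_{h_i}-\bE_{h_i}}{\hcurl{\D_{h_i}}}.
\end{align*}
The first term is a genuine Galerkin (best-approximation / C\'ea--Strang) error for the well-posed conforming discretization of \cref{prob:varprob_modified_disc}: using the discrete inf-sup condition, continuity of $\widehat\Phi_i$, and Galerkin quasi-optimality, it is bounded by a constant times $\inf_{\bV_h\in\bm{P}^c_0(\tau_{h_i})}\norm{\Psi_i\bE-\bV_h}{\hcurl{\D_{h_i}}}$, which I would in turn bound by the interpolation error $\norm{\Psi_i\bE-\mathbf{\Pi}_{h_i}(\Psi_i\bE)}{\hcurl{\D_{h_i}}}$. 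Here \cref{lem:smooth_pull} is essential: it certifies $\Psi_i\bE|_K\in\hscurl{K}{s}$ with $\norm{\Psi_i\bE}{\hscurl{K}{s}}\leq \vartheta^{\half}C\norm{\bE}{\hscurl{\bT_i(K)}{s}}$, so applying \cref{prop:locinterp} element-by-element and summing yields the $h_i^s\norm{\bE}{\hscurl{\D}{s}}$ contribution.

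For the second term, I would observe that $\widehat\bE_{h_i}-\bE_{h_i}\in\bm{P}^c_0(\tau_{h_i})$ satisfies, for every $\bV\in\bm{P}^c_0(\tau_{h_i})$, the identity $\widehat\Phi_i(\widehat\bE_{h_i}-\bE_{h_i},\bV)=\widehat\bF_i(\bV)-\Phi(\bE_{h_i},\bV)+(\Phi-\widehat\Phi_i)(\bE_{h_i},\bV)=(\widehat\bF_i-\bF)(\bV)+(\Phi-\widehat\Phi_i)(\bE_{h_i},\bV)$, since $\Phi(\bE_{h_i},\bV)=\bF(\bV)$. Invoking the discrete inf-sup constant and then the consistency bounds \eqref{eq:phihat_approx_phi} and its right-hand-side counterpart, this term is controlled by $Cd_1(\D_H,\bT_i)\bigl(\norm{\bE_{h_i}}{\hcurl{\D_{h_i}}}+\norm{\bJ}{\bm{W}^{1,\infty}(\D_H)}\bigr)$; a uniform a priori bound on $\norm{\bE_{h_i}}{\hcurl{\D_{h_i}}}$ (from \cref{ass:sesqform_approx_disc} and boundedness of $\bF$) converts the first factor into a constant, and \eqref{eq:Tboundh} replaces $d_1(\D_H,\bT_i)$ by $h_i^\K$. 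Finally, $\norm{\cdot}{\hcurl{\D}}$ and $\norm{\cdot}{\hcurl{\D_{h_i}}}$ are interchangeable up to a $\vartheta$-dependent constant via \cref{lem:curl_pull_cont}. Combining the two contributions gives the stated estimate. The main obstacle I anticipate is the bookkeeping in the second term: one must be careful that the a priori bound on the discrete solution $\bE_{h_i}$ is genuinely uniform in $i$ (so that the $d_1$-scaling survives cleanly) and that the perturbation threshold $\mathfrak{i}$ from \cref{prop:mod_disc_prob} is chosen large enough to guarantee a mesh-independent discrete inf-sup constant throughout.
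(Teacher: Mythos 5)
Your proposal is correct and follows essentially the same route as the paper: the same triangle-inequality split through the solution $\widehat\bE_{h_i}$ of \cref{prob:varprob_modified_disc} (well posed for $i>\mathfrak{i}$ by \cref{prop:mod_disc_prob}), with the first term handled via \cref{lem:smooth_pull} and quasi-optimal interpolation (the paper cites \cite[Thm.~5.41]{Monk:2003aa} where you assemble \cref{prop:locinterp} elementwise, which amounts to the same bound), and the second term by the perturbation/consistency argument of \cref{thm:pull_result_cont} scaled by $d_1(\D_H,\bT_i)\leq Ch_i^\K$ from \cref{eq:Tboundh}. The only cosmetic difference is that you test the error equation with the inf-sup of $\widehat\Phi_i$ so that $\norm{\bE_{h_i}}{\hcurl{\D_{h_i}}}$ appears (bounded uniformly via \cref{ass:sesqform_approx_disc}), whereas the paper's bound features $\norm{\widehat\bE_{h_i}}{\hcurl{\D_{h_i}}}$ instead---a mirror-image of the same estimate.
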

\begin{proof}
Fix $i\in\IN$ large enough so that the results of \cref{prop:mod_disc_prob} hold true and let $\widehat\bE_i\in\bm{P}^c_0(\tau_{h_i})$ denote the solution of
\cref{prob:varprob_modified_disc}. Then, one can write
\begin{align*}
\norm{\Psi_i\bE-\bE_{h_i}}{\hcurl{\D_{h_i}}}\leq \norm{\Psi_i\bE-\widehat\bE_{h_i}}{\hcurl{\D_{h_i}}}+\norm{\widehat\bE_{h_i}-\bE_{h_i}}{\hcurl{\D_{h_i}}}.
\end{align*}
\Cref{lem:smooth_pull} implies that $\Psi_i\bE\in\hscurl{K}{s}$ for all $K\in\tau_{h_i}$, with
\begin{align*}
\norm{\Psi_i(\bE)}{\hscurl{K}{s}}\leq C\norm{\bE}{\hscurl{\bT_{i}(K)}{s}},
\end{align*}
for $C$ positive independent of $i\in\IN$.
Since $\Psi_i\bE\in\hocurl{\D_{h_i}}$ is the solution of \cref{prob:varprob_modified} (see \cref{prop:mod_prob_equiv}), Theorem 5.41 in \cite{Monk:2003aa} yields
\begin{align*}
\norm{\Psi_i\bE-\widehat\bE_{h_i}}{\hcurl{\D_{h_i}}} &\leq C{h_i}^s\left(\sum\limits_{K\in\tau_{h_i}}\norm{\Psi_i\bE}{\hscurl{K}{s}}^2\right)^\half \\
&\leq C{h_i}^s\left(\sum\limits_{K\in\tau_{h_i}}\norm{\bE}{\hscurl{\bT_i(K)}{s}}^2\right)^\half\leq C{h_i}^s\norm{\bE}{\hscurl{\D}{s}},
\end{align*}
where the positive constant $C$ is not necessarily equal in each appearance.
Furthermore, by a reasoning analogous to that in \cref{thm:pull_result_cont}, followed by an
application of \cref{lem:curl_pull_cont} together with \cref{ass:app_dom_T} and the wellposedness of \cref{prob:varprob_modified_disc} (also, see \cref{prop:mod_prob_equiv}), we have that
\begin{align*}
\norm{\widehat\bE_{h_i}-\bE_{h_i}}{\hcurl{\D_{h_i}}}&\leq C d_1(\D_H,\bT_i)(\norm{\widehat\bE_{h_i}}{\hcurl{\D_{h_i}}}+\norm{\bJ}{\bm{W}^{1,\infty}(\D_H)})
\leq C h_i^{\K}\norm{\bJ}{\bm{W}^{1,\infty}(\D_H)}
\end{align*}
where again $C$ may vary, but remains $h_i$-independent.
The result in \cref{eq:pull_approach_disc_estimate} is then deduced by
a simple combination of the above estimates.
\end{proof}

\subsection{Discrete convergence to an extended solution over \texorpdfstring{$\D_H$}{Lg}}
\label{ssec:extension_convergence_disc}
We now transfer our results in \cref{ssec:extended_conv} to our discrete setting.
\begin{theorem}\label{thm:first_result_disc}
Let \cref{ass:dom,ass:sesqform,ass:mesh,ass:extension,ass:sesqform_approx,ass:app_dom,ass:sesqform_approx_disc,ass:app_dom_T,ass:first_approach} hold, let $\bE$ and $\bE_{h_i}$ denote the unique solutions of
\cref{prob:varprob,prob:varprob_approx_disc} and assume that $\bE\in\hscurl{\D}{s}$ for some $s\in\{1:k\}$.
Furthermore, let $\mu^{-1}$, $\epsilon$ and $\bJ$ have coefficients in $W^{1,\infty}(\D)$. By \cref{ass:first_approach} $\bE\in\hscurl{\D_H}{r}$ for some $r\in(0,1]$. Then, there exists some $\mathfrak{i}\in\IN$ such that, for all $i>\mathfrak{i}$, it holds that
\begin{align*}
\begin{aligned}
\norm{\bE-\bE_{h_i}}{\hcurl{\D_{h_i}}}\leq
C\bigg{[}(h_i^{\K}\!+\!h_i^{r(\K+1)})\norm{\bE}{\hscurl{\D_H}{r}}
\!+\!h_i^s\norm{\bE}{\hscurl{\D}{s}}
\!+\!h_i^\K\norm{\bJ}{\bm{W}^{1,\infty}(\D_H)}
\bigg{]},
\end{aligned}
\end{align*}
where $C>0$ depends on $\omega$, $\mu$, $\epsilon$ and $\bJ$, but is independent of $i\in\IN$.
\end{theorem}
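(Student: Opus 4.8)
The plan is to mirror the proof of \cref{thm:first_result_cont}, substituting the discrete solution $\bE_{h_i}$ for the continuous approximate solution $\widetilde\bE_i$ and invoking the discrete pull-back estimate \cref{thm:pull_result_disc} in place of its continuous counterpart \cref{thm:pull_result_cont}. First I would fix $i\in\IN$ large enough that \cref{prop:mod_disc_prob} guarantees well-posedness of the modified discrete problem---this is the origin of the threshold $\mathfrak{i}$---and then split the error by the triangle inequality as
\begin{align*}
\norm{\bE-\bE_{h_i}}{\hcurl{\D_{h_i}}}\leq \norm{\bE-\Psi_i\bE}{\hcurl{\D_{h_i}}}+\norm{\Psi_i\bE-\bE_{h_i}}{\hcurl{\D_{h_i}}},
\end{align*}
where $\bE$ is taken in its $\D_H$-extension supplied by \cref{ass:first_approach} and $\Psi_i:\hocurl{\D}\to\hocurl{\D_{h_i}}$ is the curl-conforming pull-back of \cref{lem:trans_curl}.

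For the first term I would observe that, since $\widetilde\D_i=\D_{h_i}$, the computation already carried out in the proof of \cref{thm:first_result_cont}---resting on \cref{lem:trans_approx} and the convexity of $\D_H$---applies verbatim and gives
\begin{align*}
\norm{\bE-\Psi_i\bE}{\hcurl{\D_{h_i}}}\leq C\left(d_1(\D_H,\bT_i)+d_0(\D_H,\bT_i)^{r}\right)\norm{\bE}{\hscurl{\D_H}{r}}.
\end{align*}
The only genuinely new step is to translate these discrepancies into mesh-size decay, which is exactly what \cref{eq:Tboundh} of \cref{ass:app_dom_T} provides: inserting $d_0(\D_H,\bT_i)\leq Ch_i^{\K+1}$ and $d_1(\D_H,\bT_i)\leq Ch_i^{\K}$ turns the prefactor into $h_i^{\K}+h_i^{r(\K+1)}$ multiplying $\norm{\bE}{\hscurl{\D_H}{r}}$, matching the first contribution in the claimed bound.

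For the second term I would apply \cref{thm:pull_result_disc} directly; its hypotheses are precisely those assumed here (notably $\bE\in\hscurl{\D}{s}$ with $s\in\{1:k\}$), so it yields
\begin{align*}
\norm{\Psi_i\bE-\bE_{h_i}}{\hcurl{\D_{h_i}}}\leq C\left(h_i^s\norm{\bE}{\hscurl{\D}{s}}+h_i^\K\norm{\bJ}{\bm{W}^{1,\infty}(\D_H)}\right).
\end{align*}
Summing the two estimates and reordering the terms then produces the stated inequality. Since each ingredient is a direct transcription of a previously established result, I do not expect a substantive obstacle; the only points demanding care are that the first-term bound is measured over $\D_{h_i}$ rather than $\D$---so the $\D_H$-extension of $\bE$ must be retained throughout, and one must ensure $i$ is large enough that the contraction hypothesis of \cref{lem:trans_approx} holds (guaranteed by $d_1(\D_H,\bT_i)<1$ in \cref{ass:app_dom})---and that the lower threshold $\mathfrak{i}$ is inherited from \cref{prop:mod_disc_prob}.
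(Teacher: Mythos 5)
Your proposal is correct and matches the paper's own proof essentially step for step: the same triangle-inequality splitting through $\Psi_i\bE$, the same reuse of the bound from the proof of \cref{thm:first_result_cont} combined with the rates $d_0(\D_H,\bT_i)\leq Ch_i^{\K+1}$ and $d_1(\D_H,\bT_i)\leq Ch_i^{\K}$ from \cref{ass:app_dom_T}, the same direct application of \cref{thm:pull_result_disc} to the second term, and the same origin of the threshold $\mathfrak{i}$ in \cref{prop:mod_disc_prob}. Your added remarks on retaining the $\D_H$-extension and on the contraction hypothesis of \cref{lem:trans_approx} are sound points of care that the paper leaves implicit.
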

\begin{proof}
Fix $i\in\IN$ large enough so that the results of \cref{prop:mod_disc_prob} hold true. The triangle inequality yields
\begin{align*}
\norm{\bE-\bE_{h_i}}{\hcurl{\D_{h_i}}}
&\leq \norm{\bE-\Psi_i\bE}{\hcurl{\D_{h_i}}}+
\norm{\Psi_i\bE-\bE_{h_i}}{\hcurl{\D_{h_i}}}.
\end{align*}
From the proof of \cref{thm:first_result_cont}, it follows that
\begin{align*}
\norm{\bE-\Psi_i\bE}{\hcurl{\D_{h_i}}}\leq C\left(d_1(\D_H,\bT_i)+d_0(\D_H,\bT_i)^{r}\right)\norm{\bE}{\hscurl{\D_H}{r}},
\end{align*}
which, together with \cref{ass:app_dom_T}, gives
\begin{align*}
\norm{\bE-\Psi_i\bE}{\hcurl{\D_{h_i}}}\leq C\left(h_i^{\K}+h_i^{r(\K+1)}\right)\norm{\bE}{\hscurl{\D_H}{r}},
\end{align*}
where $C$ is a positive constant depending only on $\vartheta$ in \cref{ass:app_dom}. Moreover,
a direct application of \cref{thm:pull_result_disc} leads to
\begin{align*}
\norm{\Psi_i\bE-\bE_{h_i}}{\hcurl{\D_{h_i}}}
\leq C (h_i^s\norm{\bE}{\hscurl{\D}{s}}+h_i^\K\norm{\bJ}{\bm{W}^{1,\infty}(\D_H)}),
\end{align*}
where the positive constant $C$ follows from \cref{thm:pull_result_disc}.
The result then follows by a straightforward combination of previous estimates.
%
\end{proof}


\subsection{A fully discrete estimate}
We now deduce convergence estimates for the fully
discrete Maxwell variational problem under consideration
by incorporating our findings in \cite{aylwin2020effect}.
To this end, let us introduce quadrature rules for the numerical
approximation of \cref{prob:varprob_approx_disc}.
\begin{definition}[Quadratures]\label{def:quad}
For $L\in\IN$, let $\{\breve{w}_{l}\}_{l=1}^{L}\subset \IR$ be a set of
\emph{quadrature weights} and let $\{\breve{\bm{b}}_{l}\}_{l=1}^{L}\subset \rK$
be a set of corresponding \emph{quadrature points}. Then, we introduce the following linear operator
over $\C(\breve{K})$,
\begin{align*}
 Q_{\breve{K}}:\left\lbrace\begin{aligned}\C(\breve{K}) &\to\IC\\
 \phi &\mapsto \sum_{l=1}^{L}\breve{w}_{l}\phi(\breve{\bm{b}}_{l})\end{aligned}\right.
\end{align*}
Moreover, we say $Q_{\breve{K}}$ is exact on polynomials of degree $\mathfrak{n}\in\IN_0$ if and only if
\begin{align*}
 Q_{\breve{K}}(\phi)=\int_{\breve{K}}\phi(\bx)\ddx\quad\forall\;\phi\in\mathbb{P}_{\mathfrak{n}}(\breve{K};\IC).
\end{align*}
\end{definition}
Quadratures on arbitrary mesh elements $K\in\tau_h$ are defined from a quadrature
on $\breve{K}$ through the mappings in \cref{ass:mesh} as follows
\begin{align}\label{eq:Qcurv}
Q_{K}(\phi):=Q_{\breve{K}}(\det{\rd\bmT_K}\phi\circ\bmT_K).
\end{align}
Then, for three distinct quadrature rules over $\breve K$\textemdash denoted
as $Q_\rK^1$, $Q_\rK^2$ and $Q_\rK^3$ and which shall be specified later on\textemdash we introduce the numerical
approximations of the sesquilinear and antilinear forms in
\eqref{eq:Phi} as
\begin{align}
&{\Phi}_{h}({\bU_{h}},{\bV_{h}}):=\sum_{ K\in\tau_{h}}
Q_{K}^{1}(\mu^{-1}\curl\bU_{h}\cdot\curl\overline{\bV_{h}})
+Q_{K}^{2}(-\omega^2\epsilon\bU_{h}\cdot\overline{\bV_{h}}),\label{eq:numericPhi}\\
&{\bF}_{h}({\bV_{h}}):=\sum_{ K\in\tau_{h}}Q_{ K}^3(-\imath\omega\bJ\cdot \overline{\bV_{h}}),\label{eq:numericrhs}
\end{align}
for all $\bU_h$ and $\bV_h$ in $\bm{P}^c_0(\tau_h)$, where $Q_{K}^{i}$ is constructed
from $Q_{\breve K}^{i}$ through \eqref{eq:Qcurv} for all $i\in\{1:3\}$.

\begin{problem}[Fully discrete problem]
\label{prob:varprobnumcurv}
Find $\widetilde{\bE}_h\in\bm P^c_0({\tau}_h)$ such that
\begin{align*}
\Phi_{h}(\widetilde{\bE}_h,{\bV}_h)=
\bF_{h}({\bV}_h),
\end{align*}
for all ${\bV}_h\in\bm P^c_0(\tau_h)$.
\end{problem}

\subsubsection{Fully discrete convergence of solution pull-backs in approximate domains}

We now present a fully discrete version of \cref{thm:pull_result_disc}, stating the approximation properties
of the solution of \cref{prob:varprobnumcurv} to the pull-back of the solution of \cref{prob:varprob}. To limit the number of parameters with effects on the convergence rate, we restrict ourselves to the case of isoparametric finite elements ($\K=k$).

\begin{theorem}\label{thm:pull_result_disc_num}
Let all the assumptions  in \cref{thm:pull_result_disc} hold and take $\K=k$.
Additionally, let us assume that there holds that
\begin{align*}
\bJ\in\bW^{s,q}(\D_H)\quad\mbox{and}\quad \epsilon_{i,j},\;(\mu^{-1})_{i,j}\in W^{s,\infty}(\D_H)\quad\forall\;i,\;j\in\{1:3\},
\end{align*}
for some $q>\min(2,s/3)$, where $s\in\{1:k\}$ is as in \cref{thm:pull_result_disc}, as well as the following conditions on the quadrature rules defining $\Phi_h(\cdot, \cdot)$ and $\bF_h(\cdot)$ in \cref{eq:numericPhi,eq:numericrhs}, respectively,
\begin{itemize}
\item $Q^1_{\rK}$ is exact for polynomials
of degree $2k+s-3$ and
\item $Q^2_{\rK}$ and $Q^3_{\rK}$ are exact for polynomials
of degree $3k+s-3$.
\end{itemize}
Then, there exists some $\mathfrak{i}\in\IN$ such that for all
$i\in\IN$ with $i>\mathfrak{i}$, \cref{prob:varprobnumcurv}
is uniquely solvable and its solution, denoted $\widetilde\bE_{h_i}$, satisfies
\begin{align*}
\norm{\Psi_i\bE-\widetilde\bE_{h_i}}{\hcurl{\D_{h_i}}}\leq
Ch_i^s\left(\norm{\bE}{\hscurl{\D}{s}}+\norm{\bJ}{\bm{W}^{s,q}(\D_H)}+\norm{\bJ}{\bm{W}^{1,\infty}(\D_H)}\right),
\end{align*}
where the positive constant $C$ is independent of the mesh-size.
\end{theorem}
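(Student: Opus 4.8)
The plan is to interpose the exact-integration discrete solution $\bE_{h_i}$ of \cref{prob:varprob_approx_disc}, so as to separate the numerical-integration error (the new ingredient) from the domain-approximation error already controlled by \cref{thm:pull_result_disc}. By the triangle inequality,
\begin{align*}
\norm{\Psi_i\bE-\widetilde\bE_{h_i}}{\hcurl{\D_{h_i}}}\leq \norm{\Psi_i\bE-\bE_{h_i}}{\hcurl{\D_{h_i}}}+\norm{\bE_{h_i}-\widetilde\bE_{h_i}}{\hcurl{\D_{h_i}}}.
\end{align*}
Since all hypotheses of \cref{thm:pull_result_disc} are in force and $\K=k\geq s$, the first term is bounded by $Ch_i^s(\norm{\bE}{\hscurl{\D}{s}}+\norm{\bJ}{\bm{W}^{1,\infty}(\D_H)})$, the $h_i^{\K}=h_i^k$ contribution being absorbed into $h_i^s$ for $h_i\leq 1$.

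It remains to estimate the second term. The crucial observation is that $\bE_{h_i}$ and $\widetilde\bE_{h_i}$ both live in $\bm{P}^c_0(\tau_{h_i})$ and are built from the \emph{same} unmodified coefficients $\mu^{-1}$, $\epsilon$ and the same current $\bJ$ on the common domain $\D_{h_i}$; they differ only through the replacement of exact integration by the rules $Q^1_{\rK}$, $Q^2_{\rK}$, $Q^3_{\rK}$. This is precisely the situation analysed in \cite{aylwin2020effect}, whose element-wise quadrature-consistency estimates apply verbatim here, being local and depending only on the reference-element rule and the polynomial degree of the Jacobian. I would first use those estimates to establish that $\Phi_h(\cdot,\cdot)$ inherits a uniform discrete inf-sup condition from \cref{ass:sesqform_approx_disc}: the bound $|\Phi(\bU_h,\bV_h)-\Phi_h(\bU_h,\bV_h)|\leq Ch_i^s\norm{\bU_h}{\hcurl{\D_{h_i}}}\norm{\bV_h}{\hcurl{\D_{h_i}}}$ exhibits $\Phi_h$ as a small perturbation of $\Phi$ for large $i$, which both yields the unique solvability of \cref{prob:varprobnumcurv} for all $i>\mathfrak{i}$ and a lower bound on the perturbed inf-sup constant.

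With stability in hand, a Strang-type argument (\emph{cf.}~\cite[Thm.~4.2.11]{SauterSchwabBEM}) combined with the Galerkin identity $\Phi(\bE_{h_i},\bV_h)=\bF(\bV_h)$ gives
\begin{align*}
\norm{\bE_{h_i}-\widetilde\bE_{h_i}}{\hcurl{\D_{h_i}}}\leq C\sup_{\bV_h\neq\bnul}\frac{|\Phi(\bE_{h_i},\bV_h)-\Phi_h(\bE_{h_i},\bV_h)|+|\bF(\bV_h)-\bF_h(\bV_h)|}{\norm{\bV_h}{\hcurl{\D_{h_i}}}}.
\end{align*}
The two consistency errors on the right are exactly the quantities bounded in \cite{aylwin2020effect}: the exactness degrees $2k+s-3$ for $Q^1_{\rK}$ and $3k+s-3$ for $Q^2_{\rK}$, $Q^3_{\rK}$, together with $\mu^{-1},\epsilon\in W^{s,\infty}(\D_H)$ and $\bJ\in\bW^{s,q}(\D_H)$, yield bounds of order $h_i^s$, so that
\begin{align*}
\norm{\bE_{h_i}-\widetilde\bE_{h_i}}{\hcurl{\D_{h_i}}}\leq Ch_i^s\left(\norm{\bE_{h_i}}{\hcurl{\D_{h_i}}}+\norm{\bJ}{\bm{W}^{s,q}(\D_H)}\right).
\end{align*}
The a priori bound $\norm{\bE_{h_i}}{\hcurl{\D_{h_i}}}\leq C(\norm{\bE}{\hscurl{\D}{s}}+\norm{\bJ}{\bm{W}^{1,\infty}(\D_H)})$---obtained from the first-term estimate together with \cref{lem:curl_pull_cont} applied to $\Psi_i\bE$---makes this constant uniform in $i$, and summing the two contributions delivers the claim.

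The main obstacle is the faithful transfer of the quadrature-consistency machinery of \cite{aylwin2020effect} to the present setting: one must check that the element-local estimates hold on the approximate domain $\D_{h_i}$ (they do, being independent of which domain the mesh covers) and that the regularity $W^{s,\infty}$ of the coefficients and $\bW^{s,q}$ of $\bJ$ feed correctly into them for the prescribed exactness degrees. The secondary difficulty is the uniformity of all constants in $i$---in particular the $i$-independent a priori bound on $\norm{\bE_{h_i}}{\hcurl{\D_{h_i}}}$ and the lower bound on the perturbed discrete inf-sup constant of $\Phi_h$.
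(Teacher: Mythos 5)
Your global strategy---interposing the exact-integration discrete solution $\bE_{h_i}$ and running a Strang-type perturbation argument for the quadrature error---is in the same spirit as the paper's proof, which invokes a modified version of \cite[Thm.~4]{aylwin2020effect} (itself a Strang-type result) to compare $\widetilde\bE_{h_i}$ with $\Psi_i\bE$ through $\bE_{h_i}$; your treatment of the first term via \cref{thm:pull_result_disc} matches the paper.

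There is, however, a genuine gap in your treatment of the consistency errors. You assert $\modulo{\Phi(\bU_h,\bV_h)-\Phi_h(\bU_h,\bV_h)}\leq Ch_i^s\norm{\bU_h}{\hcurl{\D_{h_i}}}\norm{\bV_h}{\hcurl{\D_{h_i}}}$ for arbitrary discrete fields, and consequently $\norm{\bE_{h_i}-\widetilde\bE_{h_i}}{\hcurl{\D_{h_i}}}\leq Ch_i^s\left(\norm{\bE_{h_i}}{\hcurl{\D_{h_i}}}+\norm{\bJ}{\bm{W}^{s,q}(\D_H)}\right)$. No such bound holds with only the $\hcurl{\D_{h_i}}$-norm of the first argument: the Bramble--Hilbert-type quadrature estimates of \cite{aylwin2020effect} are element-local bounds of the form $h^s\norm{\bU_h}{\hscurl{K}{s}}\norm{\bV_h}{\hcurl{K}}$, i.e., they charge broken higher-order norms to the discrete first argument. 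Since an inverse estimate gives $\norm{\bU_h}{\hscurl{K}{s}}\lesssim h^{-s}\norm{\bU_h}{\hcurl{K}}$, against a general (e.g., highly oscillatory) discrete field the resulting rate is $O(1)$, not $O(h_i^s)$; if a rate-$h_i^s$ bound in pure $\hcurl{}$-norms were available, the exactness hypotheses and all the interpolation machinery would be superfluous. This is precisely why the paper's Strang bound carries the term $h_i^s\bigl(\sum_{K\in\tau_{h_i}}\norm{\mathbf{\Pi}_{h_i}(\Psi_i\bE)}{\hscurl{K}{s}}^2\bigr)^{\half}$: the consistency error is evaluated at the \emph{interpolant} of $\Psi_i\bE$, whose broken norm is then controlled by $\norm{\bE}{\hscurl{\D}{s}}$ via \cref{prop:locinterp_cont,lem:smooth_pull,lem:curl_pull_cont}. (The same overreach affects your stability step; to transfer the inf-sup condition of \cref{ass:sesqform_approx_disc} to $\Phi_h$ one only needs an $o(1)$ perturbation bound, which \cite{aylwin2020effect} supplies, but it is not of strength $h_i^s$ in the norms you state.) Your argument is repairable: either control $\bigl(\sum_{K\in\tau_{h_i}}\norm{\bE_{h_i}}{\hscurl{K}{s}}^2\bigr)^{\half}$ by writing $\bE_{h_i}=\mathbf{\Pi}_{h_i}(\Psi_i\bE)+\bigl(\bE_{h_i}-\mathbf{\Pi}_{h_i}(\Psi_i\bE)\bigr)$ and applying an inverse estimate to the $O(h_i^s)$-small discrete remainder, or state the Strang bound with the consistency term already evaluated at $\mathbf{\Pi}_{h_i}(\Psi_i\bE)$, as the paper does. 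As written, the step producing $Ch_i^s\norm{\bE_{h_i}}{\hcurl{\D_{h_i}}}$ is unjustified and the claimed rate does not follow.
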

\begin{proof}
Fix $i\in\IN$ as in the proof of \cref{thm:pull_result_disc} and let $\bE_{h_i}\in\bm{P}^c_0(\tau_{h_i})$ denote the
solution of \cref{prob:varprob_approx_disc}.
Performing small modifications of \cite[Thm.~4]{aylwin2020effect} (\emph{cf.}~proof of \cite[Thm.~4.2.11]{SauterSchwabBEM}),
it follows that
\begin{align*}
\norm{\Psi_i\bE-\widetilde\bE_{h_i}}{\hcurl{\D_{h_i}}} 
\leq C\Bigg[
\norm{\Psi_i(\bE)-\bE_{h_i}}{\hcurl{\D_{h_i}}}+\norm{\mathbf{\Pi}_{h_i}(\Psi_i\bE)-\bE_{h_i}}{\hcurl{\D_{h_i}}}\\
+
h_i^s\left(\sum\limits_{K\in\tau_{h_i}}\norm{\mathbf{\Pi}_{h_i}(\Psi_i\bE)}{\hscurl{K}{s}}^2\right)^\half+h_i^s\norm{\bJ}{\bm{W}^{s,q}(\D_H)} \Bigg],
\end{align*}
where $C>0$ depends on the problem's parameters, including the constants in \cref{ass:sesqform_approx,ass:sesqform_approx_disc}, but it is independent of
$i\in\IN$. \cref{thm:pull_result_disc} then yields
\begin{align*}
\norm{\Psi_i(\bE)-\bE_{h_i}}{\hcurl{\D_{h_i}}}\leq C h_i^s(\norm{\bE}{\hscurl{\D}{s}}+\norm{\bJ}{\bm{W}^{1,\infty}(\D_H)}),
\end{align*}
with $C$ from \cref{thm:pull_result_disc} independently of $i\in\IN$.
The continuity of the global interpolation operator (\cref{prop:locinterp_cont,defi:glob_interp}) leads to
\begin{align*}
\norm{\mathbf{\Pi}_{h_i}(\Psi_i(\bE))-\bE_{h_i}}{\hcurl{\D_{h_i}}}=\norm{\mathbf{\Pi}_{h_i}(\Psi_i(\bE)-\bE_{h_i})}{\hcurl{\D_{h_i}}}\leq C\norm{\Psi_i(\bE)-\bE_{h_i}}{\hcurl{\D_{h_i}}},
\end{align*}
where the positive constant $C$ is as in \cref{prop:locinterp_cont} and is also independent of $i\in\IN$.
Moreover, by \cref{prop:locinterp_cont,lem:smooth_pull}, for every $K\in\tau_{h_i}$ it holds that
\begin{align*}
\norm{\mathbf{\Pi}_{h_i}(\Psi_i(\bE))}{\hscurl{K}{s}}\!=\!\norm{\br_K(\Psi_i(\bE))}{\hscurl{K}{s}}
\!\leq\! C\norm{\Psi_i(\bE)}{\hscurl{K}{s}}
\!\leq\! C\norm{\bE}{\hscurl{\bT_i(K)}{s}},
\end{align*}
so that
\begin{align*}
\sum\limits_{K\in\tau_{h_i}}\norm{\mathbf{\Pi}_{h_i}(\Psi_i(\bE))}{\hscurl{K}{s}}^2\leq
C^2\norm{\bE}{\hscurl{\D}{s}}^2,
\end{align*}
where $C>0$ follows from \cref{prop:locinterp_cont,lem:smooth_pull}. The estimate follows by an application of \cref{lem:curl_pull_cont}.
\end{proof}
\subsubsection{Fully discrete convergence to an extended solution over \texorpdfstring{$\D_H$}{Lg}}
We continue by presenting the corresponding fully discrete version of \cref{thm:first_result_disc_num},
stating the convergence of the solution of \cref{prob:varprobnumcurv} to a smooth extension of the solution of
\cref{prob:varprob}. For simplicity (as before) we consider only the case of isoparametric finite elements ($\K=k)$.

\begin{theorem}\label{thm:first_result_disc_num}
Let all the assumptions in \cref{thm:first_result_disc} hold and take $\K=k$. Moreover, assume that there holds that
\begin{align*}
\bJ\in\bW^{s,q}(\D^H)\quad\mbox{and}\quad \epsilon_{i,j},\;(\mu^{-1})_{i,j}\in W^{s,\infty}(\D_H)\quad\forall\;i,\;j\in\{1:3\},
\end{align*}
for some $q>\min(2,s/3)$, where $s\in\{1:k\}$ is as in \cref{thm:first_result_disc_num},
as well as the following conditions on the quadrature rules defining $\Phi_h(\cdot, \cdot)$ and $\bF_h(\cdot)$ in \cref{eq:numericPhi,eq:numericrhs}, respectively,
\begin{itemize}
\item $Q^1_{\rK}$ is exact for polynomials
of degree $2k+s-3$ and
\item $Q^2_{\rK}$ and $Q^3_{\rK}$ are exact for polynomials
of degree $3k+s-3$.
\end{itemize}
Then, there exists some $\mathfrak{i}\in\IN$ such that for all
$i\in\IN$ with $i>\mathfrak{i}$, \cref{prob:varprobnumcurv}
is uniquely solvable and its solution, denoted $\widetilde\bE_{h_i}$, satisfies
\begin{align*}
\norm{\bE-\widetilde\bE_{h_i}}{\hcurl{\D_{h_i}}}\!\leq\!
C(\vartheta)(h^{\K}\!+\!h^{r(\K+1)})\norm{\bE}{\hscurl{\D_H}{r}}\!+\!
C(\vartheta,\kT)h_i^s\norm{\bE}{\hscurl{\D}{s}}\!+\!Ch_i^s,
\end{align*}
where positive constants $C(\vartheta)$ and $C(\kT)$ depend only on
$\vartheta$ and $\kT$, respectively, and $C$ is a positive constant
independent of the mesh-size.
\end{theorem}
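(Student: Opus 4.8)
The plan is to follow the proof of \cref{thm:first_result_disc} almost verbatim, replacing the exactly integrated solution $\bE_{h_i}$ by the fully discrete solution $\widetilde\bE_{h_i}$ of \cref{prob:varprobnumcurv} and substituting the fully discrete pull-back estimate \cref{thm:pull_result_disc_num} for \cref{thm:pull_result_disc}. First I would fix $i\in\IN$ large enough that \cref{thm:pull_result_disc_num} applies; this both supplies the threshold $\mathfrak{i}$ and guarantees the unique solvability of \cref{prob:varprobnumcurv}, hence the existence of $\widetilde\bE_{h_i}$. With $\bE$ understood as its $\hscurl{\D_H}{r}$-extension from \cref{ass:first_approach}, the triangle inequality gives
\begin{align*}
\norm{\bE-\widetilde\bE_{h_i}}{\hcurl{\D_{h_i}}}\leq\norm{\bE-\Psi_i\bE}{\hcurl{\D_{h_i}}}+\norm{\Psi_i\bE-\widetilde\bE_{h_i}}{\hcurl{\D_{h_i}}},
\end{align*}
where $\Psi_i$ is the curl-conforming pull-back of \cref{lem:trans_curl}.

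For the first term I would reuse the bound already derived in \cref{thm:first_result_disc}: combining \cref{lem:trans_approx} (applied to $\bE$ and to $\curl\bE$) with the matrix estimates \cref{eq:CboundMat} and the geometric decay \cref{eq:Tboundh} of \cref{ass:app_dom_T} yields
\begin{align*}
\norm{\bE-\Psi_i\bE}{\hcurl{\D_{h_i}}}\leq C(\vartheta)\left(h_i^{\K}+h_i^{r(\K+1)}\right)\norm{\bE}{\hscurl{\D_H}{r}}.
\end{align*}
This step involves no quadrature and therefore transfers unchanged, accounting for the first summand of the claimed estimate. The second term is controlled directly by \cref{thm:pull_result_disc_num}, whose hypotheses---the isoparametric choice $\K=k$, the extra regularity of $\bJ$, $\epsilon$ and $\mu^{-1}$, and the exactness degrees $2k+s-3$ for $Q^1_\rK$ and $3k+s-3$ for $Q^2_\rK$, $Q^3_\rK$---coincide with those assumed here, giving
\begin{align*}
\norm{\Psi_i\bE-\widetilde\bE_{h_i}}{\hcurl{\D_{h_i}}}\leq Ch_i^s\left(\norm{\bE}{\hscurl{\D}{s}}+\norm{\bJ}{\bm{W}^{s,q}(\D_H)}+\norm{\bJ}{\bm{W}^{1,\infty}(\D_H)}\right).
\end{align*}
Adding the two bounds and absorbing the fixed, $i$-independent data norms $\norm{\bJ}{\bm{W}^{s,q}(\D_H)}$ and $\norm{\bJ}{\bm{W}^{1,\infty}(\D_H)}$ into the generic constant of the trailing $Ch_i^s$ term produces the stated inequality, with $C(\vartheta)$ originating in the first term and $C(\vartheta,\kT)$ in the interpolation and pull-back constants furnished by \cref{thm:pull_result_disc_num}.

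Because both constituent estimates are already available, the present argument is essentially bookkeeping, and I expect no real obstacle at this level. The genuine difficulty sits upstream, inside \cref{thm:pull_result_disc_num}: there one must adapt \cite[Thm.~4]{aylwin2020effect} through a Strang-type consistency argument so that the quadrature error---now incurred on the \emph{transformed} data $\mu_{\bT_i}^{-1}$, $\epsilon_{\bT_i}$ and $\bJ_{\bT_i}$ over curved elements---does not spoil the $O(h_i^s)$ rate. The stated exactness degrees must remain adequate after composition with the degree-$\K$ element maps $\bmT_K$, and it is precisely to keep that bookkeeping tractable that the isoparametric restriction $\K=k$ is imposed; once \cref{thm:pull_result_disc_num} is in hand, the combination above is immediate.
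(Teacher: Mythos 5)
Your proposal is correct and follows essentially the same route as the paper: the paper's proof likewise fixes $i$ as in \cref{thm:first_result_disc}, splits $\norm{\bE-\widetilde\bE_{h_i}}{\hcurl{\D_{h_i}}}$ by the triangle inequality through $\Psi_i\bE$, and concludes by applying \cref{lem:trans_approx} (via the proof of \cref{thm:first_result_cont}) to the first term and \cref{thm:pull_result_disc_num} to the second. Your write-up is in fact more explicit than the paper's one-line proof, and your closing remark correctly locates the real work upstream in \cref{thm:pull_result_disc_num}.
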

\begin{proof}
Fix $i\in\IN$ as in the proof of \cref{thm:first_result_disc}. By the triangle inequality, we have that
\begin{align*}
\norm{\bE-\widetilde\bE_{h_i}}{\hcurl{\D_{h_i}}}\leq \norm{\bE-\Psi_i\bE}{\hcurl{\D_{h_i}}}+\norm{\Psi_i\bE-\widetilde\bE_{h_i}}{\hcurl{\D_{h_i}}},
\end{align*}
and the result follows by an application of \cref{lem:trans_approx,thm:pull_result_disc_num} (also see the proof of \cref{thm:first_result_cont}). 
%
\end{proof}
\section{Numerical Results}\label{sec:NumRes}
We test our main results on a simple numerical example. For simplicity, and in order to study only the effects of
domain approximation quality on the convergence rate of the finite element method, we consider the
exact domain $\D$ to be the ball with radius $1$ centered at the origin, i.e.~
$$\D:=\{\bx\in\IR^3\ :\ \norm{\bx}{\IR^3}<1\}.$$
Since $\D$ is a convex domain, the approximate domains in $\kD$ may be chosen to be contained in $\D$, so that
$\D\equiv\D_H$ in \cref{ass:first_approach} and no extension of the solution $\bE\in\hocurl{\D}$
of \cref{prob:varprob} outside of $\D$ is required. Hence, our following results consider the
error measurement as in \cref{ssec:extension_convergence_disc} only.

We consider the coercive variational problem on $\hocurl{\D}$ given by the following sesquilinear and antilinear
forms:
\begin{align*}
\Phi(\bU,\bV):=\int_\D(\mu_0\bI)^{-1}\curl\bU\cdot{\curl\overline{\bV}}-\omega^2(\epsilon_0\bI)\bU\cdot\overline{\bV}\;\d\!\bx\quad\mbox{and}\quad\bF(\bV):=-\imath\omega\int_\D\bJ\cdot\overline{\bV}\;\d\!\bx,
\end{align*}
where $\epsilon_0:=1$, $\mu_0:=2$, $\omega:=1$ and $\bJ:=\imath[J_1,J_2,J_3]^\top$, with
\begin{align*}
J_1(\bx)&:=x_1-\tfrac{\pi^2}{8}x_1x_2\cos\left(\tfrac{\pi}{2}\norm{\bx}{\IR^3}^2\right),\\
J_2(\bx)&:=x_2+\left(\tfrac{1}{4}+\tfrac{\pi^2}{8}(x_1^2+x_3^2)\right)\cos\left(\tfrac{\pi}{2}\norm{\bx}{\IR^3}^2\right)+\tfrac{\pi}{4}\sin\left(\tfrac{\pi}{2}\norm{\bx}{\IR^3}^2\right),\\
J_3(\bx)&:=x_3-\tfrac{\pi^2}{8}x_2x_3\cos\left(\tfrac{\pi}{2}\norm{\bx}{\IR^3}^2\right).
\end{align*}
Under the above choices, the exact solution to \cref{prob:varprob} is 
\begin{align}\label{eq:real_sol}
\bE(\bx):=\begin{bmatrix}
x_1 , x_2 + \frac{1}{4}\cos\left(\frac{\pi}{2}\norm{\bx}{\IR^3}^2\right) , x_3
\end{bmatrix}^\top.
\end{align} 
The various meshes used throughout our experiments were constructed using GMSH \cite{geuzaine2009gmsh},
while \cref{prob:varprob_approx_disc} was solved using GETDP version 3.4.0 \cite{getdp}.
\subsection{Approximate domains}
Let us consider two different sequences of meshes of different order. The first sequence considers meshes constructed
from straight tetrahedrons only ($\K=1$), while the second sequence considers meshes consisting of second order
elements curved tetrahedrons ($\K=2$). \cref{fig:mesh} shows the first three meshes
of first and second order. For more details on the conditions satisfied by the second order mesh elements, we refer to
\cite{Johnen_2013}.

\begin{figure}[]
\centering
\fbox{\includegraphics[width=.2\textwidth]{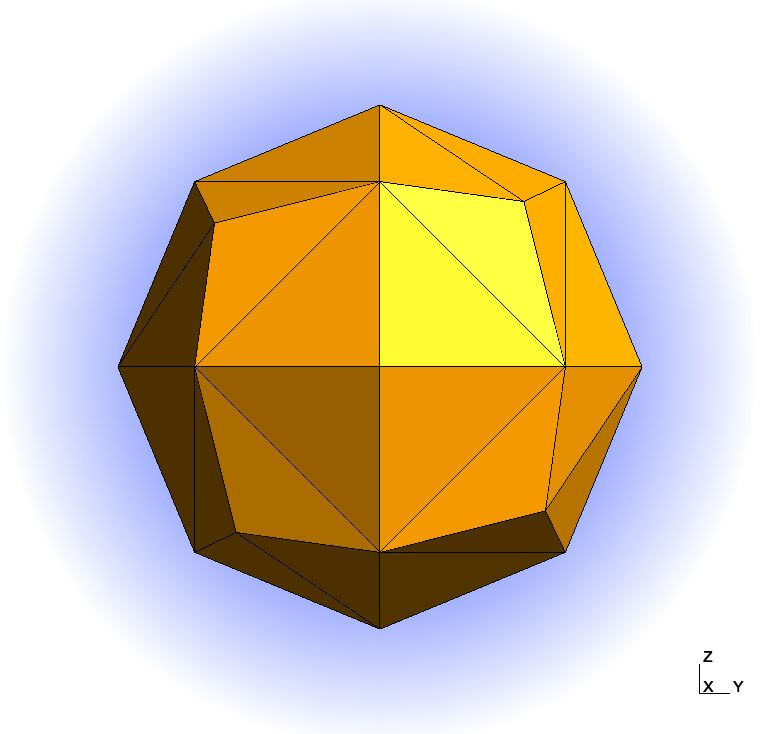}}\hfill
\fbox{\includegraphics[width=.2\textwidth]{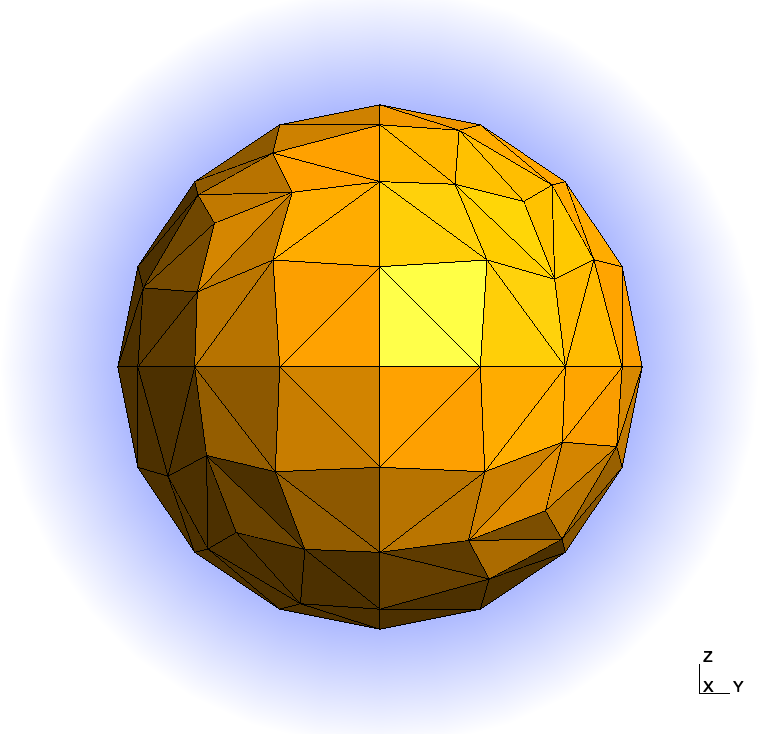}}\hfill
\fbox{\includegraphics[width=.2\textwidth]{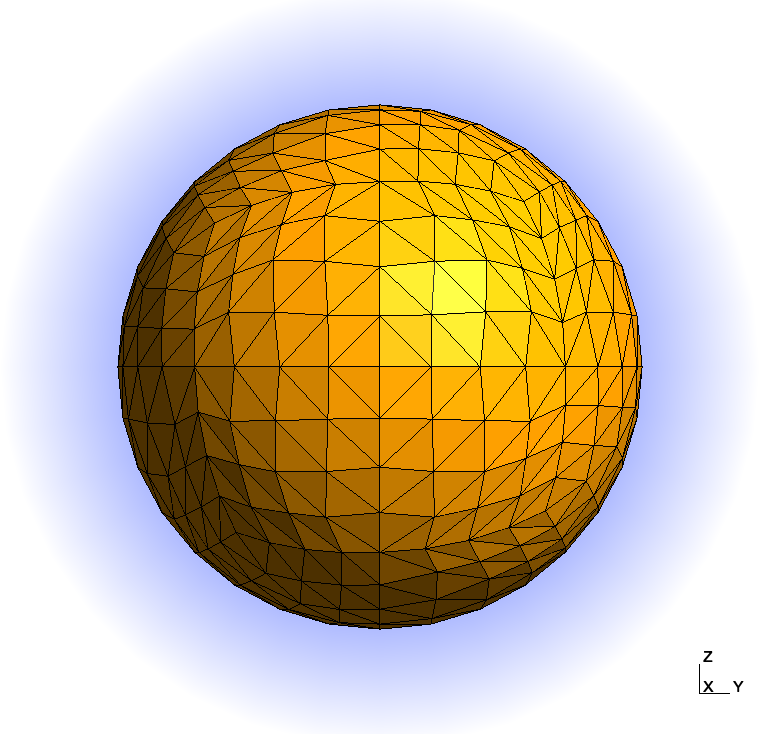}}\\~\\
\fbox{\includegraphics[width=.2\textwidth]{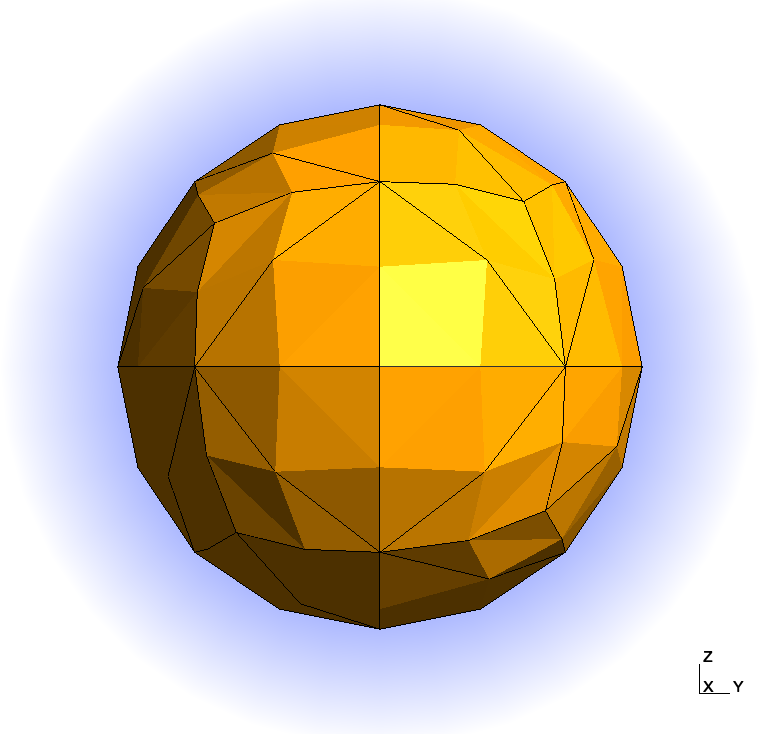}}\hfill
\fbox{\includegraphics[width=.2\textwidth]{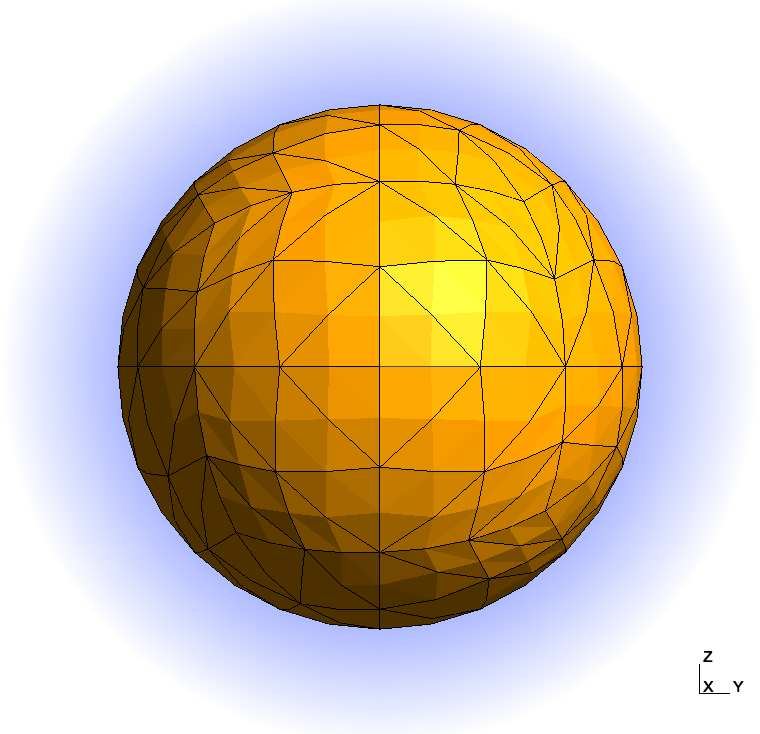}}\hfill
\fbox{\includegraphics[width=.2\textwidth]{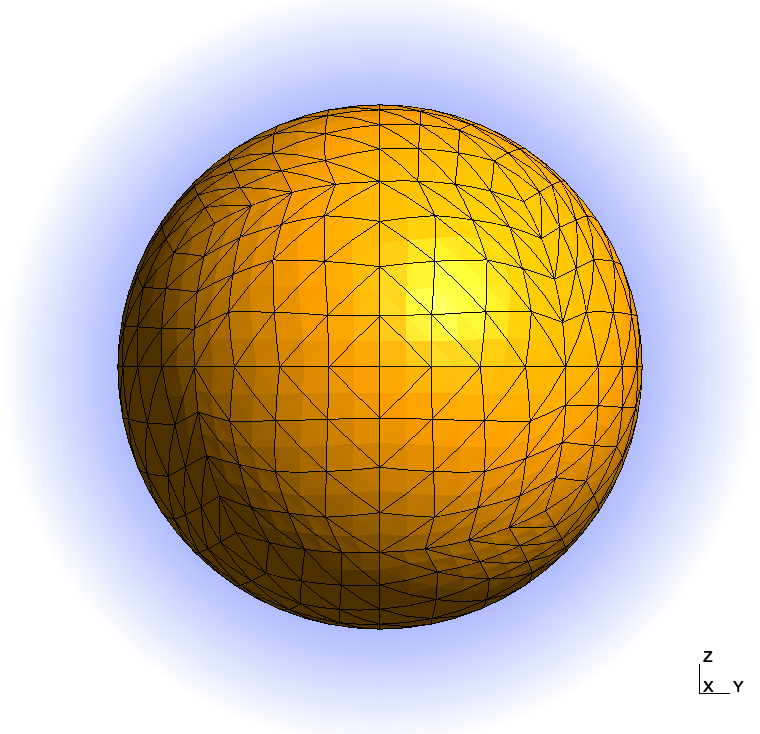}}
\caption{First three meshes of order $\K=1$, followed by the first three meshes of order $\K=2$.}
\label{fig:mesh}
\end{figure}


\subsection{Convergence results}
We employ first and second order curl-conforming elements on both straight and curved (order $2$) meshes
in order to test the results exposed in Theorem \ref{thm:first_result_disc}. To that end, we measure the error
\begin{align*}
\norm{\bE-\bE_{h_i}}{\hcurl{\D_{h_i}}}
\end{align*}
as $i\in\IN$ grows towards infinity, where $\bE\in\hocurl{\D}$ is as in \eqref{eq:real_sol} (the solution to \cref{prob:varprob})
and $\bE_{h_i}\in\hocurl{\D_{h_i}}$ denotes the solution to \cref{prob:varprob_approx_disc}. 
\cref{fig:first_order_poly} displays the convergence of the solution to \cref{prob:varprob_approx_disc}
to the continuous one when
using a first-order curl-conforming approximation ($k=1$) together with first and second order mesh elements ($\K=1$ and $\K=2$, respectively). 
\cref{fig:second_order_poly}, on the other hand, displays the convergence of the solution to \cref{prob:varprob_approx_disc}
to the continuous solution when
using a second-order curl-conforming approximation (i.e.~$k=1$) on the same meshes as before.

\begin{figure}
\centering
{\includegraphics[width=.65\textwidth]{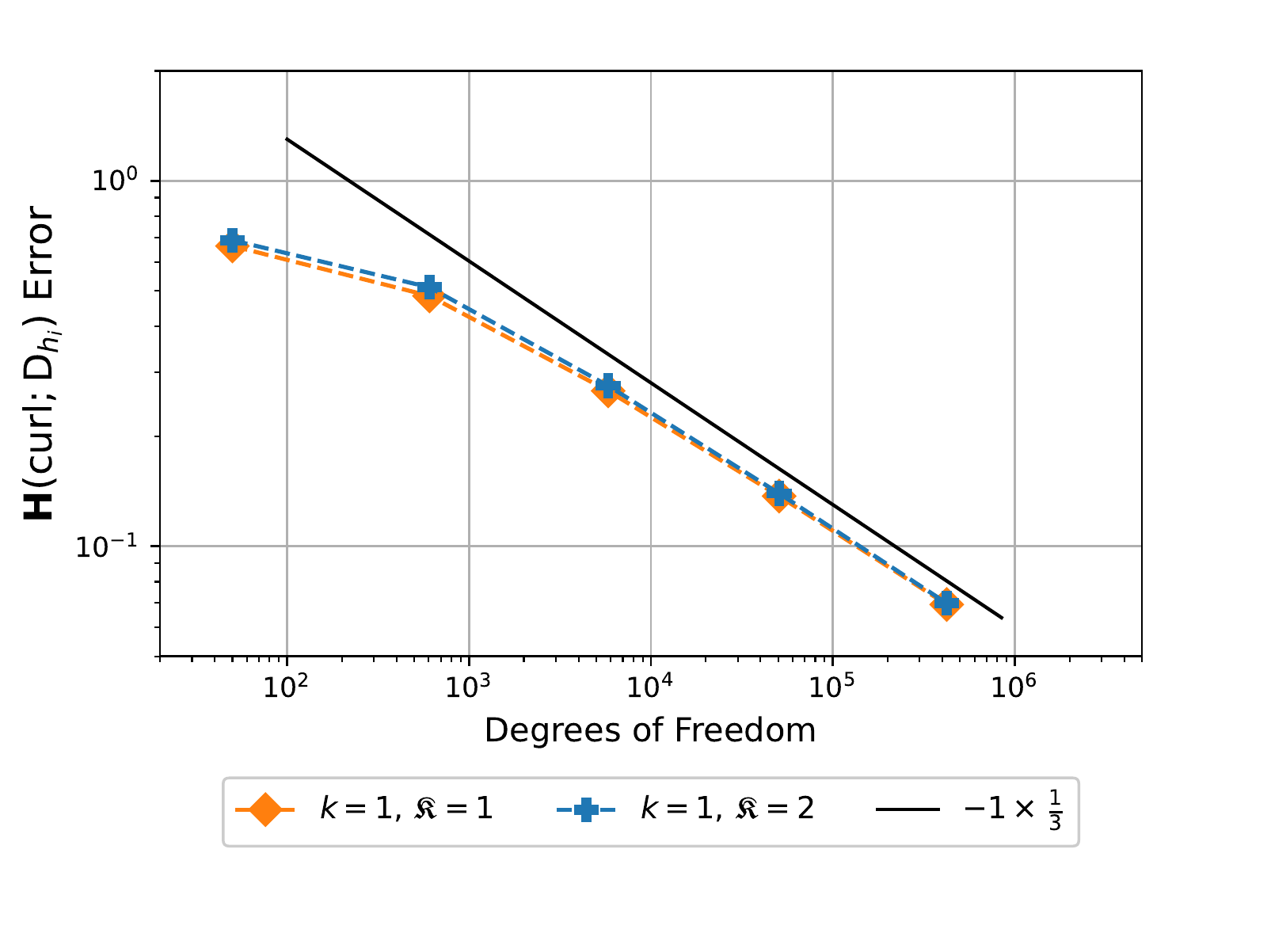}}\hfill
\caption{Error convergence in $\hcurl{\D_{h_i}}$-norm of solutions to \cref{prob:varprob_approx_disc} with respect to that of \cref{prob:varprob} using first order curl-conforming finite elements on straight ($\K=1$) and
curved ($\K=2$) meshes. In both cases, we observe the expected linear behavior with respect to the mesh-size .}
\label{fig:first_order_poly}
\end{figure}
\begin{figure}
\centering
{\includegraphics[width=.65\textwidth]{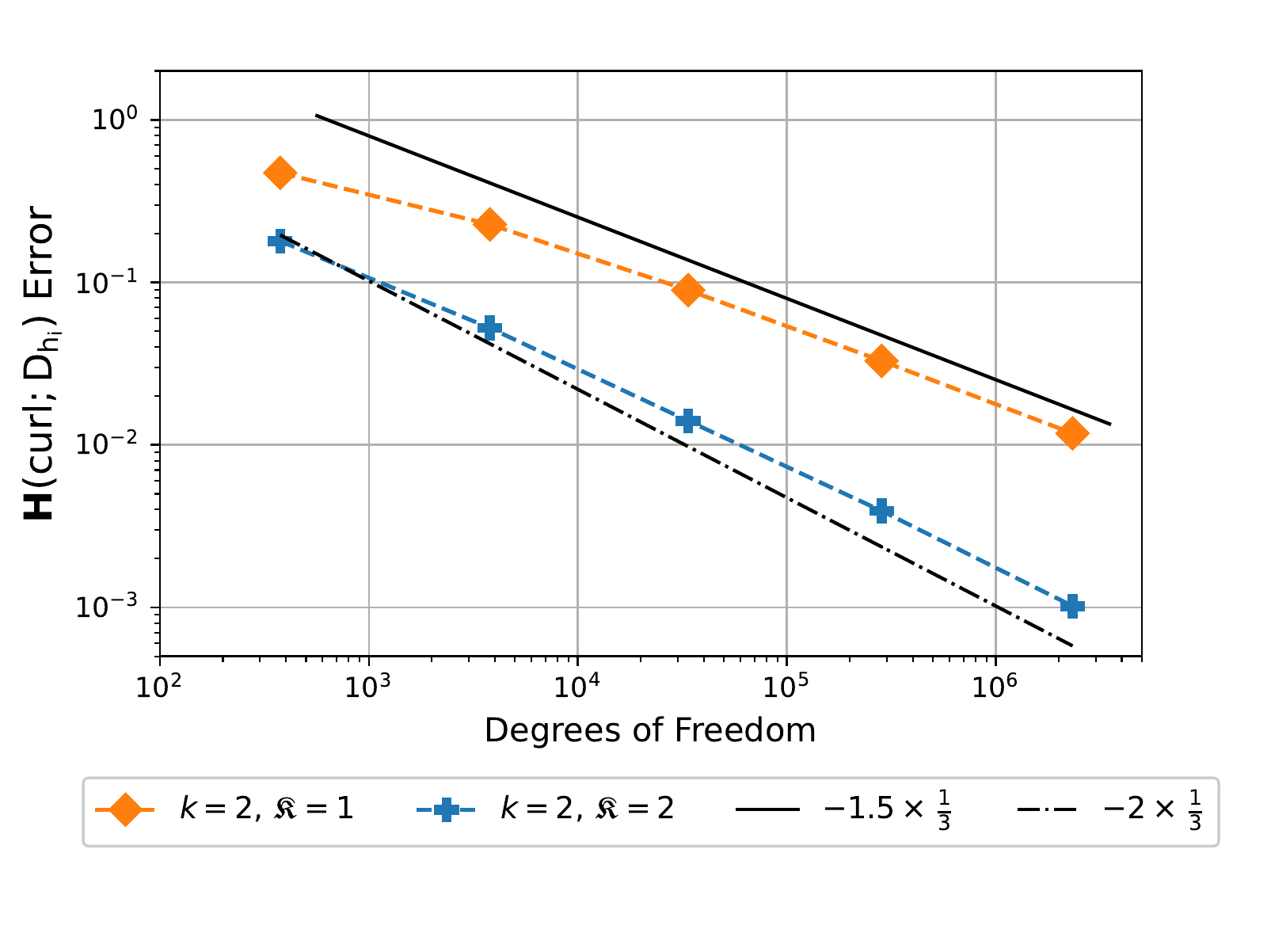}}\hfill
\caption{Error convergence in $\hcurl{\D_{h_i}}$-norm of solutions to \cref{prob:varprob_approx_disc} with respect to that of \cref{prob:varprob} using second-order curl-conforming finite elements on straight ($\K=1$) and
curved ($\K=2$) meshes. A pre-asymptotic regime is observed in both cases after which the latter achieves the expected second order---with respect to $h_i$---convergence rate, while the former case only attains a degenerated rate of roughly order $1.5$ with respect to the mesh-size due to the low-order approximation of the boundary of $\D$.}
\label{fig:second_order_poly}
\end{figure}

\section{Conclusions}\label{sec:Conc}
For the family of Maxwell variational problems here considered, \cref{thm:pull_result_cont,thm:first_result_cont}
provide sufficient conditions on the family of approximate domains $\{\widetilde\D_i\}_{i\in\IN}$
to ensure convergence rates of: (i) pull-backs of continuous solutions in approximate domains to those in the original one; and, (ii)
continuous solutions in approximate domains to smooth extensions of the exact solution.
\cref{thm:pull_result_disc,thm:pull_result_disc} extend these results to their
discrete counterparts to then include the effects of numerical integration for a fully discrete analysis in \cref{thm:pull_result_disc_num,thm:pull_result_disc_num}, based on our previous work \cite{aylwin2020effect}.

Our results on curved meshes established various properties of (local) interpolation on curved meshes
and pull-backs $\psi^c$ in \eqref{eq:fe_spaces_K}. These correspond to \cref{lem:pullbackbound,lem:ref:interp:error} and \cref{prop:locinterp,prop:locinterp_cont}, which are of
independent interest. Also the simple numerical examples in \cref{sec:NumRes} confirm our findings. Observe the failure of second-order polynomials to achieve second-order convergence rates to the solution on straight meshes in \cref{fig:second_order_poly}.

We left out the issue of demonstrating the regularities of the electric field that are required in
order to ensure different rates of convergence (\emph{cf.}~\cite{alberti2014elliptic} for globally smooth boundaries). However, results ensuring
arbitrary degrees of regularity in domains with corners and allowing for an application of the results
as in \cite{Babu_kaRID_ID_Partially_supp_2000}, are, to the best of our knowledge,
unavailable for Maxwell's equations.

Finally, we were able to consider integer degrees of regularity only in
\cref{thm:first_result_disc,thm:first_result_disc_num} ($s\in\{1:k\}$).
This stems from the same deficiency in \cref{lem:pullbackbound} and further
improvements are left as future work as well as extensions to more specific and
varied Maxwell variational problems such as problems in periodic media,
FEM/BEM couplings and applications in uncertainty quantification \cite{aylwin2020properties,AJZS18,silva2017quantifying}.

\section*{Acknowledgments}
The authors would like to thank Dr.~Jos\'e Pinto and Dr.~Fernando Henr\'iquez for their invaluable comments on earlier versions of this manuscript.

\appendix
\section{Technical results concerning curl-conforming finite elements}\label{sec:tech_res}

\begin{lemma}\label{lem:curvineqK}
Let \cref{ass:mesh} hold and take $s\in\{0:\K+1\}$ and $K\in\tau_h$. Then, for any $\breve{U}\in \hsob{s}{\rK}$
and with 
$U:=\breve{U}\circ \bmT_{{K}}^{-1}$, it holds that $U\in\hsob{s}{K}$ and that,
\begin{align}
\vert{\breve U}\vert_{{s},{\rK}}\leq
C\inf_{\bx\in\breve{K}}\modulo{\det{\rd\bmT_K(\bx)}}^{-\frac{1}{2}}h^{s}
\norm{U}{s,K},\label{eq:tech1}
\end{align}
where the constant $C>0$ is independent of $K$ and $h$. Analogously, for any $U\in\hsob{s}{K}$ and with
$\breve {U}:={U}\circ \bmT_{{K}}$,
it holds that $\breve U\in\hsob{s}{\rK}$ and 
\begin{align}
\seminorm{U}{s,K}\leq
C\sup_{\bx\in\breve{K}}\modulo{\det{\rd\bmT_K(\bx)}}^{\frac{1}{2}}h^{-s}
\norm{\breve U}{s,\rK},\label{eq:tech2}
\end{align}
with a positive constant $C$ as before.
\end{lemma}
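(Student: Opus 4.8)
The plan is to derive both bounds from a single mechanism: the multivariate chain rule applied to a composition with $\bmT_K$ or $\bmT_K^{-1}$, combined with the derivative bounds \eqref{eq:normderbound} and a change of variables whose Jacobian is controlled through \eqref{eq:bounddet}. Since each estimate is linear in the respective function, it suffices to establish it for $\breve U$ (respectively $U$) infinitely smooth and then extend to $\hsob{s}{\rK}$ (respectively $\hsob{s}{K}$) by density; this extension simultaneously justifies the regularity assertions, because $\bmT_K$ is a $\C^{\K+1}$-diffeomorphism by \cref{ass:mesh} and $s\le\K+1$, so composition preserves $H^s$-regularity.

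I would treat \eqref{eq:tech2} first. For smooth $U$ on $K$, write $U=\breve U\circ\bmT_K^{-1}$ and fix a multi-index $\bm\alpha$ with $\modulo{\bm\alpha}=s$. By the Fa\`a di Bruno formula, $\partial^{\bm\alpha}U$ is a finite sum of terms of the form $\big((\partial^{\bm\lambda}\breve U)\circ\bmT_K^{-1}\big)\,\prod_i\partial^{\bm\beta_i}(\bmT_K^{-1})_{j_i}$, where $1\le\modulo{\bm\lambda}\le s$ and the differentiation orders $\modulo{\bm\beta_i}$ of the factors acting on $\bmT_K^{-1}$ sum to exactly $s$. Invoking the second bound in \eqref{eq:normderbound} factor by factor yields $\big\lvert\prod_i\partial^{\bm\beta_i}(\bmT_K^{-1})_{j_i}\big\rvert\le C h^{-\sum_i\modulo{\bm\beta_i}}=C h^{-s}$, so that
\[
\modulo{\partial^{\bm\alpha}U(\bx)}\le C h^{-s}\sum_{\modulo{\bm\lambda}\le s}\modulo{(\partial^{\bm\lambda}\breve U)(\bmT_K^{-1}(\bx))}\qquad\forall\,\bx\in K .
\]
Squaring, integrating over $K$, and changing variables via $\bx=\bmT_K(\by)$ (so that $\d\!\bx=\modulo{\det{\rd\bmT_K(\by)}}\,\d\!\by$) gives
\[
\norm{\partial^{\bm\alpha}U}{0,K}^2\le C h^{-2s}\,\sup_{\by\in\rK}\modulo{\det{\rd\bmT_K(\by)}}\sum_{\modulo{\bm\lambda}\le s}\norm{\partial^{\bm\lambda}\breve U}{0,\rK}^2 .
\]
Summing over all $\bm\alpha$ with $\modulo{\bm\alpha}=s$ and taking square roots yields \eqref{eq:tech2}.

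The bound \eqref{eq:tech1} follows by the symmetric computation, now differentiating $\breve U=U\circ\bmT_K$ on $\rK$, using the first bound in \eqref{eq:normderbound} to produce the factor $h^{s}$, and changing variables by $\by=\bmT_K^{-1}(\bx)$; the resulting Jacobian $\modulo{\det{\rd\bmT_K(\bmT_K^{-1}(\bx))}}^{-1}$ is bounded by $\big(\inf_{\by\in\rK}\modulo{\det{\rd\bmT_K(\by)}}\big)^{-1}$, which supplies the infimum factor in \eqref{eq:tech1}. I expect the main obstacle to be the careful bookkeeping in the Fa\`a di Bruno expansion---specifically, verifying that in every term the orders of the derivatives falling on $\bmT_K^{-1}$ (respectively $\bmT_K$) sum to exactly $s$---since this homogeneity is precisely what produces the sharp powers $h^{\mp s}$; the remaining steps (the factorwise application of \eqref{eq:normderbound} and the two changes of variables) are routine. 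The constraint $s\le\K+1$ enters only to guarantee, via \cref{ass:mesh}, that all derivatives of $\bmT_K^{\pm1}$ appearing in the expansion are well defined and bounded as in \eqref{eq:normderbound}.
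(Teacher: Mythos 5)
Your argument is correct and is, in substance, the paper's own: the paper proves \cref{lem:curvineqK} simply by citing Lemma~1 of \cite{ciarlet1972combined} (with Lemma~3 of \cite{ciarlet1972interpolation} guaranteeing that the constants depend only on $s$), and that cited lemma rests on exactly your mechanism---the Fa\`a di Bruno expansion of $\partial^{\bm\alpha}\bigl(U\circ\bmT_K^{\pm 1}\bigr)$, in which the derivative orders falling on $\bmT_K^{\pm 1}$ sum to $\modulo{\bm\alpha}$ and hence, via \eqref{eq:normderbound}, produce the sharp factors $h^{\pm s}$, followed by the change of variables supplying the $\inf$/$\sup$ determinant factors as in \eqref{eq:tech1} and \eqref{eq:tech2}. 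Your bookkeeping is sound, including the role of $s\le\K+1$ (so that \eqref{eq:cond:dTK} covers every derivative of $\bmT_K^{\pm 1}$ that appears) and the density step, which also settles the $H^s$-regularity assertions since the same pointwise bound applies at every order $l\le s$.
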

\begin{proof}
The statement in \cref{eq:tech1} is nothing more than Lemma 1 in \cite{ciarlet1972combined},
while \cref{eq:tech2} follows by replacing $\bmT_K^{-1}$ with $\bmT_K$ in Lemma 1 in \cite{ciarlet1972combined}, together---in both cases---with \Cref{ass:mesh} and \cref{eq:cond:dTK}. Note that Lemma 3 in \cite{ciarlet1972interpolation} ensures that the constants in
\cref{eq:tech1,eq:tech2} depend only on $s$.
\end{proof}

\begin{lemma}\label{lem:pullbackbound}
Let \cref{ass:mesh} hold. For all $K\in\tau_h$ and all
$\bV\in\hscurl{K}{s}$ with $s\in \{0:\K+1\}$, it holds that
\begin{align}\label{eq:pullback:1:estimate}
\modulo{\psi^c_K(\bV)}_{s,\rK}\leq C  h^{s-\half} \norm{\bV}{s,K}\quad\mbox{and} \quad
\modulo{\curl\psi^c_K(\bV)}_{s,\rK}\leq C h^{s+\half}\norm{\curl\bV}{s,K},
\end{align}
for a positive constant $C$ independent of $K\in\tau_h$ and $h$. Also, for all
$\bV\in\hscurl{\rK}{s}$ it holds that
\begin{align*}
\modulo{({\psi^c_K})^{-1}(\bV)}_{s,K}\leq C  h^{\half-s} \norm{\bV}{s,\rK}\quad\mbox{and} \quad
\modulo{\curl({\psi^c_K})^{-1}(\bV)}_{s,K}\leq C h^{-(s+\half)}\norm{\curl\bV}{s,\rK},
\end{align*}
for a positive constant $C$ as before.
\end{lemma}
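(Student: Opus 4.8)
The plan is to reduce all four bounds to two ingredients already at hand: the scalar change-of-variables seminorm estimates of \cref{lem:curvineqK}, and the pointwise derivative bounds on the geometric factors $\rd\bmT_K^\top$, $\rd\bmT_K^\co$ and their inverse-map analogues furnished by \eqref{eq:normderbound}. Since $s$ is an integer here, the Leibniz rule makes the product estimates elementary. Writing $\psi^c_K(\bV)=\rd\bmT_K^\top\,(\bV\circ\bmT_K)$ componentwise, each component is a finite sum of scalar products $g\,(V_k\circ\bmT_K)$ with $g$ an entry of $\rd\bmT_K$, and I would use the schematic product bound
\begin{align*}
\seminorm{g\,w}{s,\rK}\leq C\sum_{j=0}^{s}\Big(\max_{\modulo{\bm\beta}=j}\sup_{\rK}\modulo{\partial^{\bm\beta}g}\Big)\,\seminorm{w}{s-j,\rK},
\end{align*}
which follows by expanding $\partial^{\bm\alpha}(gw)$ with Leibniz and using $\norm{\partial^{\bm\gamma}w}{0,\rK}\leq\seminorm{w}{\modulo{\bm\gamma},\rK}$.

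For the first estimate, the entries of $\rd\bmT_K$ are first derivatives of $\bmT_K$, so $\sup_{\rK}\modulo{\partial^{\bm\beta}(\rd\bmT_K)_{ki}}\leq Ch^{\modulo{\bm\beta}+1}$ by \eqref{eq:normderbound}. Applying \eqref{eq:tech1} to each scalar composition $V_k\circ\bmT_K$ at order $s-j$ and using \eqref{eq:bounddet} gives $\seminorm{V_k\circ\bmT_K}{s-j,\rK}\leq Ch^{-3/2}h^{s-j}\norm{V_k}{s-j,K}\leq Ch^{s-j-3/2}\norm{\bV}{s,K}$. Inserting these two bounds into the product estimate with $g$ an entry of $\rd\bmT_K$ yields, term by term, the power $h^{(j+1)+(s-j-3/2)}=h^{s-1/2}$, so that summing the $s+1$ terms gives $\seminorm{\psi^c_K(\bV)}{s,\rK}\leq Ch^{s-1/2}\norm{\bV}{s,K}$, as claimed.

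The curl bound is identical in structure once the cofactor factor is controlled. Using $\curl\psi^c_K(\bV)=\rd\bmT_K^\co\,(\curl\bV\circ\bmT_K)$, I would note that each entry of $\rd\bmT_K^\co$ is a sum of products of two entries of $\rd\bmT_K$; hence by Leibniz and \eqref{eq:normderbound} one gains one extra power of $h$ relative to the Jacobian, i.e.\ $\sup_{\rK}\modulo{\partial^{\bm\beta}(\rd\bmT_K^\co)}\leq Ch^{\modulo{\bm\beta}+2}$. Running the same computation with $h^{j+2}$ in place of $h^{j+1}$ and $\curl\bV$ in place of $\bV$ produces the aligned power $h^{(j+2)+(s-j-3/2)}=h^{s+1/2}$, giving the second bound. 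The two inverse estimates then follow by the symmetric argument: $(\psi^c_K)^{-1}$ is precisely the curl-conforming pull-back associated with $\bmT_K^{-1}\colon K\to\rK$, so $(\psi^c_K)^{-1}(\bV)=\rd(\bmT_K^{-1})^\top(\bV\circ\bmT_K^{-1})$ and $\curl(\psi^c_K)^{-1}(\bV)=\rd(\bmT_K^{-1})^\co(\curl\bV\circ\bmT_K^{-1})$. The second halves of \eqref{eq:normderbound} give $\sup_{K}\modulo{\partial^{\bm\beta}(\rd(\bmT_K^{-1}))}\leq Ch^{-(\modulo{\bm\beta}+1)}$ and $\sup_{K}\modulo{\partial^{\bm\beta}(\rd(\bmT_K^{-1})^\co)}\leq Ch^{-(\modulo{\bm\beta}+2)}$, while \eqref{eq:tech2} applied to $\bV\circ\bmT_K^{-1}$ at order $s-j$ gives $\seminorm{\bV\circ\bmT_K^{-1}}{s-j,K}\leq Ch^{3/2}h^{-(s-j)}\norm{\bV}{s-j,\rK}$; combining these exactly as before aligns the powers at $h^{1/2-s}$ and $h^{-(s+1/2)}$, respectively.

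The only genuinely delicate point is the bookkeeping for the cofactor factor: one must check that a cofactor entry, being quadratic in the Jacobian entries, contributes exactly two powers of $h$ (or $h^{-1}$ for the inverse map) together with the correct Leibniz distribution of derivatives, so that the $h$-exponents telescope to the stated values. Everything else is a routine combination of \cref{lem:curvineqK}, \eqref{eq:normderbound} and \eqref{eq:bounddet}, with integrality of $s$ ensuring the product rule applies directly; each application of \cref{lem:curvineqK} at order $s-j$ is legitimate since $0\leq s-j\leq s\leq\K+1$.
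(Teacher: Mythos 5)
Your proposal is correct and takes essentially the same route as the paper's own proof: a componentwise Leibniz expansion of $\psi^c_K(\bV)=\rd\bmT_K^\top(\bV\circ\bmT_K)$ and of the cofactor identity for the curl, combining the derivative bounds \eqref{eq:normderbound}, the determinant bounds \eqref{eq:bounddet} and the scaling estimates of \cref{lem:curvineqK} so that the powers of $h$ telescope to $h^{s-\half}$ and $h^{s+\half}$, with the quadratic structure of the cofactor supplying the extra power exactly as in \eqref{eq:lemma2:cof:bound}. The only cosmetic difference is that you write out the inverse-map estimates symmetrically via $\rd(\bmT_K^{-1})$ and \eqref{eq:tech2}, where the paper simply remarks that they follow analogously.
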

\begin{proof}
We will prove only the estimates in \cref{eq:pullback:1:estimate}, as those for the inverse
of the pull-back follow analogously by noticing that
\begin{align*}
({\psi^c_K})^{-1}(\bV)=\rd\bmT_K^{-\top}(\bV\circ\bmT_K^{-1})\quad\mbox{and}\quad
\curl({\psi^c_K})^{-1}(\bV)=(\rd\bmT_K^{-1})^\co\curl\bV\circ\bmT_K^{-1},
\end{align*}
showcasing the same structure and satisfying analogous properties, despite the different
signs in the powers of $h$.

Take $\bV\in\Hsob{s}{K}$ for any $K\in\tau_h$ and $s\in\IN_0$.
Let $\bA_K=(a_{ij})_{i,j=1}^3\in\bm{W}^{\infty,\K+1}(K;\IC^{3\times 3})$, be either $\rd \bmT_K^\top$ or $\rd \bmT_K^\co$.
By definition, it holds that
\begin{align*}
\modulo{\bA_K\bV\circ \bmT_K}_{s,\rK}=\left(\sum\limits_{i=1}^3\modulo{\sum\limits_{j=1}^3a_{ij}\bV_j\circ \bmT_K}_{s,\rK}^2\right)^\half.
\end{align*}
Moreover, for $i\in\{1:3\}$, by Titu's lemma \cite[Sec.~1.2]{Andreescu_2012} we have that
\begin{align*}
\modulo{\sum\limits_{j=1}^3a_{ij}\bV_j\circ \bmT_K}_{s,\rK}^2\leq 3\sum\limits_{j=1}^3\modulo{a_{ij}\bV_j\circ \bmT_K}_{s,\rK}^2,
\end{align*}
and, for any pair $i,j\in\{1:3\}$, it holds that
\begin{align}
\modulo{a_{ij}\bV_j\circ \bmT_K}_{s,\rK}^2\!=\!\sum_{\modulo{\balpha}=s}\left\Vert{\frac{\partial^\balpha}{\partial\bx^\balpha} (a_{ij}\bV_j\circ \bmT_K)}\right\Vert_{0,\rK}^2
\leq C\sum\limits_{\substack{\modulo{\balpha_1}\leq s\\\modulo{\balpha_2}=s-\modulo{\balpha_1}}}\left\Vert{\frac{\partial^{\balpha_1}}{\partial\bx^{\balpha_1}} a_{ij}\frac{\partial^{\balpha_2}}{\partial\bx^{\balpha_2}}\bV_j\circ \bmT_K}\right\Vert_{0,\rK}^2&\label{eq:Lem2:ineqC}\\
\leq C\sum\limits_{\substack{\modulo{\balpha_1}\leq s\\\modulo{\balpha_2}=s-\modulo{\balpha_1}}}\left\Vert{\frac{\partial^{\balpha_1}}{\partial\bx^{\balpha_1}} a_{ij}}\right\Vert_{\lp{\infty}{\breve K}}^2\left\Vert{\frac{\partial^{\balpha_2}}{\partial\bx^{\balpha_2}}\bV_j\circ \bmT_K}\right\Vert_{0,\rK}^2&,\nonumber
\end{align}
where the positive constant $C$ in \cref{eq:Lem2:ineqC} follows from the
Cauchy-Schwarz inequality and depends only on $s$.
For the particular choice $\bA_k=\rd \bmT_K^{ \top}$, \cref{eq:normderbound}
ensures the existence of a uniform constant $C>0$, independent
of $K$ and the mesh-size, such that
\begin{align*}
\sum\limits_{\substack{\modulo{\balpha_1}\leq s\\\modulo{\balpha_2}=s-\modulo{\balpha_1}}}\left\Vert{\frac{\partial^{\balpha_1}}{\partial\bx^{\balpha_1}} a_{ij}}\right\Vert_{\lp{\infty}{\breve K}}^2\left\Vert{\frac{\partial^{\balpha_2}}{\partial\bx^{\balpha_2}}\bV_j\circ \bmT_K}\right\Vert_{0,\rK}^2
&\leq C \sum\limits_{\substack{\modulo{\balpha_1}\leq s\\\modulo{\balpha_2}=s-\modulo{\balpha_1}}}\hspace{-0.2cm}h^{2(\modulo{\balpha_1}+1)}\left\Vert{\frac{\partial^{\balpha_2}}{\partial\bx^{\balpha_2}}\bV_j\circ \bmT_K}\right\Vert_{0,\rK}^2\\
=C \sum\limits_{m=0}^{s}h^{2m+2}\modulo{\bV_j\circ \bmT_K}_{s-m,\rK}^2
&\leq C \left(\inf\limits_{\bx\in\breve K}{\det{\rd \bmT_K(\bx)}}\right)^{-1}h^{2s+2}\norm{\bV_j}{s,K}^2,
\end{align*}
where $C$ now includes the constant in
\cref{lem:curvineqK}. The
estimate for $\psi^c_K(\bV)$ is retrieved by the bound
in \cref{eq:bounddet}.

For $\bA_K=\rd \bmT_K^\co$, we proceed analogously
\begin{align}
\sum\limits_{\substack{\modulo{\balpha_1}\leq s\\\modulo{\balpha_2}=s-\modulo{\balpha_1}}}\!\left\Vert{\frac{\partial^{\balpha_1}}{\partial\bx^{\balpha_1}} a_{ij}}\right\Vert_{\lp{\infty}{\breve K}}^2\!\left\Vert{\frac{\partial^{\balpha_2}}{\partial\bx^{\balpha_2}}\bV_j\circ \bmT_K}\right\Vert_{0,\rK}^2\!
&\leq\! C\!\!\! \sum\limits_{\substack{\modulo{\balpha_1}\leq s\\\modulo{\balpha_2}=s-\modulo{\balpha_1}}}\hspace{-0.2cm}h^{2(\modulo{\balpha_1}+2)}\!\left\Vert{\frac{\partial^{\balpha_2}}{\partial\bx^{\balpha_2}}\bV_j\circ \bmT_K}\right\Vert_{0,\rK}^2\label{eq:lemma2:cof:bound}\\
=C \sum\limits_{m=0}^{s}h^{2m+4}\modulo{\bV_j\circ \bmT_K}_{s-m,\rK}^2
&\leq C \left(\inf\limits_{\bx\in\breve K}{\det{\rd \bmT_K}}\right)^{-1}h^{2s+4}\norm{\bV_j}{s,K}^2,\nonumber
\end{align}
where \cref{eq:lemma2:cof:bound} is deduced from \cref{eq:normderbound} and the cofactor matrix definition. The bound for $\curl\psi^c_K(\bV)$ then follows as before.
\end{proof}

\begin{lemma}\label{lem:ref:interp:error}
Let \cref{ass:mesh} hold and let $\bU\in\hscurl{\rK}{s}$ for $s\in\{1:k\}$. Then, for any $l\in\{1:s\}$ it holds that,
\begin{align*}
\norm{\bU-\breve\br(\bU)}{l,\rK}&\leq C \left(\seminorm{\bU}{s,\rK}+\seminorm{\curl\bU}{s,\rK}\right)\quad\mbox{and}\quad
\norm{\curl\bU-\curl\breve\br(\bU)}{l,\rK}\leq C \seminorm{\curl\bU}{s,\rK},
\end{align*}
for a positive constant $C>0$ independent of $\bU\in\hscurl{\rK}{s}$.
\end{lemma}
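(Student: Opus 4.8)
The plan is to treat this as two Bramble--Hilbert (Deny--Lions) arguments on the \emph{fixed} reference tetrahedron $\rK$: one for $\bU-\breve\br(\bU)$ and one for $\curl\bU-\curl\breve\br(\bU)$. Since $\rK$ is fixed there is no scaling, so every constant may depend on $\rK$, $s$, $l$ and $k$. First I would assemble the two standard inputs: (i) the boundedness of the reference interpolant, $\norm{\breve\br(\bV)}{l,\rK}\leq C\norm{\bV}{\hscurl{\rK}{s}}$ for $s\geq 1$ and $l\leq s$, which follows from the continuity of each degree of freedom in $\Sigma^c_\rK$ on $\hscurl{\rK}{s}$ (\emph{cf.}~\cite[Lem.~5.38]{Monk:2003aa}) combined with the finite-dimensional norm equivalence on $\bm{P}^c_\rK$; and (ii) the reproduction property $\breve\br(\bp)=\bp$ for all $\bp\in\bm{P}^c_\rK$, which in particular holds for every $\bp\in\mathbb{P}_{s-1}(\rK;\IC^3)\subseteq\mathbb{P}_{k-1}(\rK;\IC^3)\subseteq\bm{P}^c_\rK$, the inclusion being guaranteed by the hypothesis $s\leq k$.

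For the first estimate I would observe that $\Id-\breve\br$ is bounded from $\hscurl{\rK}{s}$ into $\Hsob{l}{\rK}$ (using $l\leq s$, so $\norm{\cdot}{l,\rK}\leq\norm{\cdot}{s,\rK}$) and annihilates $\mathbb{P}_{s-1}(\rK;\IC^3)$. Hence for every $\bp\in\mathbb{P}_{s-1}(\rK;\IC^3)$ one has $\norm{\bU-\breve\br(\bU)}{l,\rK}=\norm{(\Id-\breve\br)(\bU-\bp)}{l,\rK}\leq C\norm{\bU-\bp}{\hscurl{\rK}{s}}$, and taking the infimum over $\bp$ bounds the error by $\inf_{\bp\in\mathbb{P}_{s-1}}\norm{\bU-\bp}{\hscurl{\rK}{s}}$. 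The crux is then a generalized Deny--Lions estimate on $\hscurl{\rK}{s}$: the space $\mathbb{P}_{s-1}(\rK;\IC^3)$ is exactly the null space of the seminorm $\seminorm{\cdot}{\hscurl{\rK}{s}}$ (a field with vanishing $s$-seminorm lies in $\mathbb{P}_{s-1}$, whose curl has degree $\leq s-2$), and the identity $\norm{\bU}{\hscurl{\rK}{s}}^2=\seminorm{\bU}{\hscurl{\rK}{s}}^2+\norm{\bU}{\hscurl{\rK}{s-1}}^2$ holds with $\hscurl{\rK}{s}\hookrightarrow\hscurl{\rK}{s-1}$ compact. The Peetre--Tartar lemma therefore yields $\inf_{\bp\in\mathbb{P}_{s-1}}\norm{\bU-\bp}{\hscurl{\rK}{s}}\leq C\seminorm{\bU}{\hscurl{\rK}{s}}$, and the claimed bound follows after the elementary equivalence between the $\ell^1$ and $\ell^2$ combinations of $\seminorm{\bU}{s,\rK}$ and $\seminorm{\curl\bU}{s,\rK}$.

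For the second estimate I would exploit the commuting property of the curl-conforming interpolant, $\curl\circ\breve\br=\breve\rho\circ\curl$, where $\breve\rho$ denotes the canonical divergence-conforming interpolation onto $\curl\bm{P}^c_\rK$ (\emph{cf.}~\cite[Sec.~5.5]{Monk:2003aa}). Then $\curl\bU-\curl\breve\br(\bU)=(\Id-\breve\rho)(\curl\bU)$, and since $\breve\rho$ is bounded on $\Hsob{s}{\rK}$ for $s\geq 1$ and reproduces $\mathbb{P}_{s-1}(\rK;\IC^3)\subseteq\curl\bm{P}^c_\rK$, a standard Deny--Lions argument on $\Hsob{s}{\rK}$ gives $\norm{(\Id-\breve\rho)(\curl\bU)}{l,\rK}\leq C\inf_{\bq\in\mathbb{P}_{s-1}}\norm{\curl\bU-\bq}{s,\rK}\leq C\seminorm{\curl\bU}{s,\rK}$, which is precisely the second bound.

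The hard part will be purely at the level of the two operator-boundedness inputs and the regularity threshold they impose: the edge degrees of freedom of the first-kind N\'ed\'elec element involve one-dimensional traces, so their continuity on $\hscurl{\rK}{s}$ is delicate and is exactly what forces $s\geq 1$ (rather than $s>\tfrac12$), explaining the integer range $s\in\{1:k\}$. I would lean on \cite{Monk:2003aa} for this rather than re-deriving it. Everything else---the reproduction property, the compact-embedding identity underpinning the generalized Deny--Lions step, and the commuting diagram---is routine once the boundedness is in hand.
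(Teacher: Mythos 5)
Your proposal is correct and follows essentially the same route as the paper's proof: the first bound via the reproduction/invariance of $\breve\br$ on $\mathbb{P}_{k-1}(\rK;\IC^3)$, the boundedness of the interpolant on $\hscurl{\rK}{s}$ for $s\geq 1$ (Monk, Lem.~5.38), and a Deny--Lions quotient estimate; the second bound via the commuting diagram with the divergence-conforming interpolant and its boundedness (Monk, Lems.~5.40 and 5.15). Where the paper simply cites \cite[Thm.~5.5]{Monk:2003aa} for the infimum step, you re-derive it by Peetre--Tartar on $\hscurl{\rK}{s}$, which in fact makes explicit the point the paper glosses over, namely that a single $\bm\phi\in\mathbb{P}_{s-1}(\rK;\IC^3)$ controls $\norm{\bU+\bm\phi}{s,\rK}$ and $\norm{\curl(\bU+\bm\phi)}{s,\rK}$ simultaneously by the two seminorms (your null-space identification is right, since $\curl\bm\phi$ then has degree at most $s-2$). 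One inaccuracy to fix: $\mathbb{P}_{s-1}(\rK;\IC^3)\not\subseteq\curl\bm{P}^c_{\rK}$, because every field in $\curl\bm{P}^c_{\rK}$ is divergence-free; the commuting interpolant $\breve\rho$ is the canonical interpolant onto the full divergence-conforming space of degree $k$, which contains $\mathbb{P}_{k-1}(\rK;\IC^3)$, and it is \emph{this} inclusion that gives $\breve\rho(\bq)=\bq$ for $\bq\in\mathbb{P}_{s-1}(\rK;\IC^3)$. With that one-line correction your argument coincides with the paper's.
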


\begin{proof}
Take $s\in\{1:k\}$ and $l\in\{1:s\}$. We will prove only the estimate $$\norm{\bU-\breve\br(\bU)}{l,\rK}\leq C \left(\seminorm{\bU}{s,\rK}+\seminorm{\curl\bU}{s,\rK}\right),$$ since the remaining estimate follows by analogous arguments.
Let $\bU\in\hscurl{\rK}{s}$ and take any $\bm{\phi}\in\mathbb{P}_{k-1}(\rK;\IC^3)$. Then, by the invariance of the
canonical interpolation operator, we have that
\begin{align}\label{eq:smooth_error_inter_1}
\norm{\bU-\breve{\br}(\bU)}{l,\rK}=\norm{(\Id-\breve{\br})(\bU+\bm\phi)}{l,\rK}\leq \norm{\bU+\bm\phi}{l,\rK}+\norm{\breve{\br}(\bU+\bm\phi)}{l,\rK}.
\end{align}
Lemma 5.38 in \cite{Monk:2003aa} then allows us to bound the degrees of freedom of $(U+\bm\phi)$ through the $\hscurl{\rK}{s}$-norm (since $s\geq 1$). Specifically, we have that
\begin{align}\label{eq:smooth_error_inter_2}
\norm{\breve{\br}(\bU+\bm\phi)}{l,\rK}\leq C\left(\norm{\bU+\bm\phi}{s,\rK}+\norm{\curl(\bU+\bm\phi)}{s,\rK}\right),
\end{align}
where $C$ may depend on $l\in\{1:s\}$ and $s\in\{1:k\}$, but is independent of $\bU$ and $\bm\phi$.
A combination of \cref{eq:smooth_error_inter_1,eq:smooth_error_inter_2}, together with the fact that $l\leq s$, yields
\begin{align} \label{eq:smooth_error_inter_3}
\norm{\bU-\breve{\br}(\bU)}{l,\rK}\leq C\left(\norm{\bU+\bm\phi}{s,\rK}+\norm{\curl(\bU+\bm\phi)}{s,\rK}\right),
\end{align}
where $C$ is not necessarily the same as before, but is still independent of $\bU$ and $\bm\phi$.
Since \cref{eq:smooth_error_inter_3} holds for any $\bm{\phi}\in\mathbb{P}_{k-1}(\rK;\IC^3)$ we may take the infimum of its
right-hand side over $\mathbb{P}_{k-1}(\rK;\IC^3)$, which we may then bound by \cite[Thm.~5.5]{Monk:2003aa}, to obtain
\begin{align*} 
\norm{\bU-\breve{\br}(\bU)}{l,\rK}\leq C\left(\seminorm{\bU}{s,\rK}+\seminorm{\curl\bU}{s,\rK}\right),
\end{align*}
for a positive constant $C$ as before. 

The estimate
\begin{align*}
\norm{\curl\bU-\curl\breve\br(\bU)}{l,\rK}&\leq C \seminorm{\curl\bU}{s,\rK},
\end{align*}
follows by analogous arguments employing Lemma 5.40 in \cite{Monk:2003aa} and \cite[Lem.~5.15]{Monk:2003aa} in lieu of \cite[Lem.~5.38]{Monk:2003aa}.
\end{proof}

\section{Proofs of \texorpdfstring{\cref{prop:locinterp,prop:locinterp_cont}}{Lg}}
\label{sec:tech_res2}
\begin{proof}[Proof of \cref{prop:locinterp}]
For $\bU\in\hscurl{K}{s}$, we first estimate the
$\bm{L}^2$-portion of the norm:
\begin{align}\label{eq:interp:L2:1}
\norm{\bU-\br_K(\bU)}{0,K}\leq\sup\limits_{\bx\in\breve K}\det{\rd \bmT_K(\bx)}^{\half}\sup\limits_{\bx\in\breve K}\norm{\rd\bmT^{-1}(\bx)}{}\norm{\psi^c_K(\bU)-\breve{\br}(\psi^c_K(\bU))}{0,\rK}.
\end{align}
By \Cref{lem:ref:interp:error}, one has
\begin{align*}
\norm{\psi^c_K(\bU)-\breve{\br}(\psi^c_K(\bU))}{0,\rK}\leq c\left(\modulo{\psi^c_K(\bU)}_{s,\rK}+\modulo{\curl\psi^c_K(\bU)}_{s,\rK}\right),
\end{align*}
where $c$ is a positive constant independent of $K\in\tau_h$ and $h$. By \Cref{lem:pullbackbound}, it holds that
\begin{align*}
\norm{\psi^c_K(\bU)-\breve{\br}(\psi^c_K(\bU))}{0,{\rK}}\leq c\left(h^{s-\half}\norm{\bU}{s,K}+h^{s+\half}\norm{\curl\bU}{s,K}\right),
\end{align*}
and combining the last equation with \cref{eq:cond:dTK,eq:bounddet,eq:interp:L2:1} yields the estimate
\begin{align*}
\norm{\bU-\br_K(\bU)}{0,K}\leq c \left(h^{s}\norm{\bU}{s,K}+h^{s+1}\norm{\curl\bU}{s,K}\right)\leq ch^s\norm{\bU}{\hscurl{K}{s}},
\end{align*}
where the positive constant $c$ is as before. We continue with the estimate for the curl. Proceeding as before, we have that
\begin{align*}
\norm{\curl\bU-\curl\br_K(\bU)}{0,K}
\!&\leq\!\sup\limits_{\bx\in\breve K}\det{\rd \bmT_K(\bx)}^{-\half}\sup\limits_{\bx\in\breve K}\norm{\rd\bmT(\bx)}{}\norm{\curl\psi^c_K(\bU)-\curl\breve{\br}(\psi^c_K(\bU))}{0,K}\\
&\leq c\sup\limits_{\bx\in\breve K}\det{\rd \bmT_K(\bx)}^{-\half}\sup\limits_{\bx\in\breve K}\norm{\rd\bmT(\bx)}{}\seminorm{\curl\psi^c_K(\bU)}{s,\rK},
\end{align*}
where the last inequality follows from \Cref{lem:ref:interp:error}. \Cref{lem:pullbackbound}, together with \cref{eq:cond:dTK} and \cref{eq:bounddet}, leads to
\begin{align*}
\norm{\curl\bU-\curl\br_K(\bU)}{0,K}\leq ch^{s}\norm{\rd\bmT(\bx)}{}\norm{\curl\bU}{s,K}.
\end{align*}
The combination of the $\bm{L}^2$- and curl-estimates yield the
approximation result.
\end{proof}

\begin{proof}[Proof of \cref{prop:locinterp_cont}]
Take $\bU\in\hscurl{K}{s}$. Then, it holds that
\begin{align*}
\norm{\br_K(\bU)}{\hscurl{K}{s}}\leq \norm{\br_K(\bU)-\bU}{\hscurl{K}{s}}+\norm{\bU}{\hscurl{K}{s}}.
\end{align*}
Moreover,
\begin{align}\label{eq:lem:rk:bound:1}
\norm{\br_K(\bU)-\bU}{\hscurl{K}{s}}=\norm{({\psi^c_K})^{-1}(\breve\br(\psi^c_K(\bU))-\psi^c_K(\bU))}{\hscurl{K}{s}}.
 \end{align}
From \Cref{lem:pullbackbound}, for $l\in\{0:s\}$, it follows that
\begin{align*}
\seminorm{({\psi^c_K})^{-1}(\breve\br(\psi^c_K(\bU))-\psi^c_K(\bU))}{l,K}\leq c  h^{\half-l} \norm{\breve\br(\psi^c_K(\bU))-\psi^c_K(\bU)}{l,\rK},
\end{align*}
and a sequential application of \Cref{lem:ref:interp:error,lem:pullbackbound} yields
\begin{align*}
\norm{\breve\br(\psi^c_K(\bU))-\psi^c_K(\bU)}{l,\rK}
\leq c \left(h^{s-\half} \norm{\bU}{s,K}+h^{s + \half} \norm{\curl\bU}{s,K}\right),
\end{align*}
so that
\begin{align}\label{eq:lem:rk:bound:2}
\norm{\br_K(\bU)-\bU}{s,K}\leq c\norm{\bU}{\hscurl{K}{s}}.
\end{align}
We may proceed analogously and bound the curl portion of the norm as
\begin{align}\label{eq:lem:rk:bound:3}
\norm{\curl\br_K(\bU)-\curl\bU}{\Hsob{s}{K}}\leq c\norm{\curl\bU}{s,K}.
\end{align}
Then, combining the results in \cref{eq:lem:rk:bound:1,eq:lem:rk:bound:2,eq:lem:rk:bound:3} completes the proof.
\end{proof}

\section{Proofs of \texorpdfstring{\cref{lem:error_infty,lem:trans_approx}}{Lg}}\label{sec:tech_res_T}
\begin{proof}[Proof of \cref{lem:error_infty}]
Take $U$ as a smooth function in $\Upsilon$, i.e.~$u\in\C^{\infty}(\Upsilon)$. Then, for all $x\in\Omega$, it holds that
\begin{align*}
U\circ\bT(\bx)-U(\bx)=\int_0^1\nabla U((1-t)\bx+t\bT(\bx))\cdot(\bT(\bx)-\bx)\d\! t.
\end{align*}
Observe that $(1-t)\bx+t\bT(\bx)\in\Upsilon$, for all $t\in [0,1]$, and define $\bT_t(\bx):=(1-t)\bx+t\bT(\bx)$. Then, the convexity of $\Upsilon$ implies $\bT_t(\bx)\in\D_H$ for all $\bx\in\D_H$. Moreover, one has
\begin{align*}
\modulo{U\circ\bT(\bx)-U(\bx)}&=\modulo{\int_0^1\nabla U(\bT_t(\bx))\cdot ({\bT(\bx)-\bx})\d\! t}\\
&\leq\norm{\bT-\Id}{\Lp{\infty}{\Upsilon}}{\int_0^1\norm{\nabla U(\bT_t(\bx))}{\Lp{\infty}{\Omega}}\d\! t}\leq\norm{\bT-\Id}{\Lp{\infty}{\Upsilon}}\norm{U}{W^{1,\infty}(\Upsilon)}.
\end{align*}
The statement follows by taking the supremum over $\bx\in\Omega$ and by density of $\C^\infty(\Upsilon)$ in $W^{1,\infty}(\Upsilon)$.
\end{proof}

\begin{proof}[Proof of \cref{lem:trans_approx}]
We start by proving the statement for $s=0$. Set $\bT$ as required, then
 for any $\bU\in\Lp{2}{\Upsilon}$ it holds that
\begin{align*}
\norm{\bU\circ \bT-\bU}{0,{\Omega}}&\leq\norm{\bU}{0,{\Omega}}+\norm{\bU\circ \bT}{0,{\Omega}}\leq \norm{\bU}{0,{\Omega}}+\vartheta^\half\norm{\bU}{0,{\bT(\Omega)}}\leq(\vartheta^\half+1)\norm{\bU}{0,{\Upsilon}}.
\end{align*}

Now we prove for $s=1$. Take $\bU$ as a smooth function in $\Upsilon$, i.e.~$\bU\in\bm\C^\infty(\Upsilon)$. Then, for all $\bx\in\Omega$ one has
\begin{align*}
\bU\circ\bT(\bx)-\bU(\bx)=\int_0^1\rd\bU((1-t)\bx+t\bT(\bx))\cdot(\bT(\bx)-\bx)\dd t.
\end{align*}
Note that $(1-t)\bx+t\bT(\bx)\in\Upsilon$ for all $t\in [0,1]$ and $\bT_t(\bx):=(1-t)\bx+t\bT(\bx)$
satisfies all the conditions in \cref{eq:t_cond_lem}. In particular,
we have that
\begin{align*}
\norm{\bx-\by}{\IR^3}&\leq \norm{\bT_t(\bx)-\bT_t(\by)}{\IR^3}+t\norm{(\bT(\bx)-\bx)-(\bT(\by)-\by)}{\IR^3}\\
&\leq \norm{\bT_t(\bx)-\bT_t(\by)}{\IR^3}+t\kappa\norm{\bx-\by}{\IR^3},
\end{align*}
implying the invertibility of $\bT_t:\Omega\to\bT_t(\Omega)$.
Moreover,
\begin{align*}
&\norm{\bU\circ\bT-\bU}{0,{\Omega}}^2=\int_{\Omega}\norm{\bU\circ\bT(\bx)-\bU(\bx)}{\IR^3}^2\ddx=\int_{\Omega}\left\Vert{\int_0^1\rd\bU(\bT_t(\bx))\cdot(\bT(\bx)-\bx)\dd t}\right\Vert_{\IR^3}^2\ddx\\
&\leq\int_\Omega\int_0^1\norm{\rd\bU(\bT_t(\bx))\cdot(\bT(\bx)-\bx)}{\IR^3}^2\dd t\ddx\quad\mbox{(Jensen's inequality)}\\
&\leq \norm{\bT-\Id}{\Lp{\infty}{\Upsilon}}^2\int_\Omega\int_0^1\norm{\rd\bU(\bT_t(\bx))}{\IR^{3\times 3}}^2\dd t\ddx\\
&=\norm{\bT-\Id}{\Lp{\infty}{\Upsilon}}^2\int_0^1\int_{\bT_t(\Omega)}\norm{\rd\bU(\bx)}{\IR^{3\times 3}}^2\det{\rd\bT_t(\bx)}^{-1}\ddx \dd t \leq\vartheta\norm{\bT-\Id}{\Lp{\infty}{\Upsilon}}^2\norm{\bU}{1,{\Upsilon}}^2,
\end{align*}
so that 
\begin{align*}
\norm{\bU\circ\bT-\bU}{0,{\Omega}}\leq\vartheta^\half\norm{\bT-\Id}{\Lp{\infty}{\Upsilon}}\norm{\bU}{1,{\Upsilon}}\leq(\vartheta^\half+1)\norm{\bT-\Id}{\Lp{\infty}{\Upsilon}}\norm{\bU}{1,{\Upsilon}}.
\end{align*}
The statement for $s=1$ follows by density of $\bm\C^\infty(\Upsilon)$ in $\Hsob{1}{\Upsilon}$.
The result for real $s\in (0,1)$ follows by applying real interpolation in Sobolev
spaces (\emph{cf.}~\cite[Lem.~22.3]{Tartar2007}).
\end{proof}

\bibliographystyle{plain}
\bibliography{max}
\end{document}